\newtheorem{thm}{Theorem}[section]
\newtheorem{lem}[thm]{Lemma}
\newtheorem{cor}[thm]{Corollary}
\newtheorem{prop}[thm]{Proposition}
\theoremstyle{definition}
\newtheorem{defn}[thm]{Definition}
\newtheorem{fact}[thm]{Fact}
\newtheorem{rem}[thm]{Remark}
\newcommand{\C}{{\mathcal C}}
\begin{document}

\title[Online containers for hypergraphs, with applications]{Online
  containers for hypergraphs, with applications to linear equations}

\date{26th June 2015, 11th March 2016}
\subjclass[2000]{05C65}
\keywords{Hypergraph containers, linear equations}

\author{David Saxton}
\author{Andrew Thomason}

\address{Department of Pure Mathematics and Mathematical Statistics\\
Centre for Mathematical Sciences, Wilberforce Road, Cambridge CB3 0WB, UK}

\email{dwsaxton@gmail.com}
\email{a.g.thomason@dpmms.cam.ac.uk}

\begin{abstract}
  A set of containers for a hypergraph $G$ is a collection $\mathcal{C}$ of
  vertex subsets, such that for every independent (or, indeed, merely
  sparse) set $I$ of $G$ there is some $C\in\mathcal{C}$ with $I\subset C$, no
  member of $\mathcal{C}$ is large, and the collection $\C$ is relatively
  small. Containers with useful properties have been exhibited by Balogh,
  Morris and Samotij~\cite{BMS} and by the authors~\cite{ST1,ST2,ST3}, along
  with several applications.

  Our purpose here is to give a simpler algorithm than the one used
  in~\cite{ST2}, which nevertheless yields containers with all the
  properties needed for the main container theorem of~\cite{ST2} and its
  consequences. Moreover this algorithm produces containers having the
  so-called online property, allowing the colouring results of~\cite{ST2}
  to be extended to all, not just simple, hypergraphs.
  Most of the proof of the container theorem remains the same if this new
  algorithm is used, and we do not repeat all the details here, but
  describe only the changes that need to be made. However, for illustrative
  purposes, we do include a complete proof of a slightly weaker but simpler
  version of the theorem, which for many (perhaps most) applications is
  plenty.

  We also present applications to the number of solution-free sets of
  linear equations, including the number of Sidon sets, that were
  announced in~\cite{ST2}.
\end{abstract}

\maketitle

\section{Introduction}

Let $G$ be an $r$-uniform hypergraph with vertex set $V(G)$ and edge set
$E(G)$. Very often we shall assume that $V(G)$ is the set
$[n]=\{1,\ldots,n\}$. A subset $I\subset V(G)$ is independent if it
contains no edge. A set of containers for $G$ is a collection $\mathcal{C}$
of subsets of $V(G)$, such that, for every independent set~$I$, there is a
container $C\in\mathcal{C}$ with $I\subset C$. To be useful, each container
should be not much bigger than an independent set can be, but the number of
containers should be much smaller than the number of independent sets. A
collection $\mathcal{C}$ can sometimes serve as a substitute for the
collection of independent sets in simple expectation arguments, the small
size of $|\mathcal{C}|$ rendering the argument effective where an
expectation instead over all independent sets would yield nothing
worthwhile.

Saphozhenko~\cite{Sap1,Sap2} seems to have been the first to explicitly
consider containers for ordinary graphs, in his studies of the number of
independent sets in regular graphs, and he coined the phrase ``container
method'' (see~\cite{Sap5}). The usefulness of containers for a particular
3-uniform hypergraph was highlighted by Green~\cite{G0} in his solution to
the Cameron-Erd\H{o}s problem: see~\S\ref{subsec:lineq} for more on
this. Containers for simple regular hypergraphs were introduced
in~\cite{ST1} (extended to non-regular in~\cite{ST3}). More recently,
however, the containers constructed by Balogh, Morris and Samotij
in~\cite{BMS} (inspired originally by the graph methods of Kleitman and
Winston~\cite{KW}) and by the authors in~\cite{ST2} have been especially
effective in addressing certain questions of extremal combinatorics, due to
the small size (essentially optimal) of the collection~$\mathcal{C}$. In
consequence, the method has been adopted more widely, as in for example
\cite{BMT,CM,KOTZ,NS}.

The purpose of the present paper is to describe an algorithm for container
construction that is similar to, but different from, the algorithm
in~\cite{ST2}. The algorithm here is more straightforward (it passes
through the vertex set only once instead of multiple times), and so it is
more transparent and comprehensible. It has the further advantage that it
has the so-called online property; the algorithm in~\cite{ST2} had this
property only when applied to simple hypergraphs. The online property is
needed for applications where the number of vertices $|C|$ of a
container~$C$ is important (many applications care instead about the number
of edges inside~$C$). This is described a little more
in~\S\ref{subsec:containers}, and an application to list colourings is
given in~\S\ref{subsec:listcol}.

The paper~\cite{ST2} has a fair amount of discussion of the container
method and of the motivation behind that algorithm. Essentially all of what
is written there is relevant here too, so rather than reproduce it we refer
the reader to~\cite{ST2} for fuller information. The statement of the main
container results here, Theorem~\ref{thm:cover} and its corollaries, stated
in~\S\ref{subsec:containers}, are the same as in~\cite{ST2} (apart from the
replacement of tuples $(T_{r-1},\ldots,T_0)$ by single sets~$T$ --- see
Remark~\ref{rem:enlarge}). Indeed, the proofs from~\cite{ST2} carry over
word for word, once some straightforward degree calculations have been
carried out verifying that the present algorithm performs as least as well
as the old one. For this reason we do not give the full proofs, but
restrict ourselves just to carrying out these degree calculations and
explaining why the rest of the proof is identical apart from purely
cosmetic differences. This approach means the present paper is not
completely self-contained, but it avoids excessive duplication.

Having said that, much of the complexity of the proofs in~\cite{ST2} arose
from an effort to establish a good bound on the measure $\mu(C)$ of each
container (stated in Theorem~\ref{thm:cover}~(d)). For most applications, it
is enough that $\mu(C)$ is bounded merely by some constant less than
one. It is much easier to prove such a weaker result, and so, for
illustrative purposes, we include a full proof of a slightly weaker version
of the main container theorem; this is Theorem~\ref{thm:coverweak}.

During its operation, the algorithm used here monitors certain quantities
(subset degrees), and takes certain actions when these quantities reach a
certain threshold. These thresholds are specified by {\em threshold
  functions} $\theta_s$, discussed in~\S\ref{subsec:actual}. The threshold
functions are evaluated before the algorithm starts (this is a crucial
difference between the present algorithm and that in~\cite{ST2}). The
values of these functions determine how small the containers are and, more
importantly, what conditions on the hypergraph are needed in order to build
the containers. To obtain Theorem~\ref{thm:cover} we need to choose
$\theta_s$ quite carefully, so that the action of this algorithm emulates
the one in~\cite{ST2}. But, if a weaker result is acceptable, then such
delicacy is not necessary, and a more straightforward choice is enough to
give the (still very effective) Theorem~\ref{thm:coverweak}. More
explanation can be found in~\S\ref{subsec:actual} and~\S\ref{sec:analysis}.

At the same time as giving the new algorithm, we take the opportunity to
describe some applications concerning solution-free sets for linear
equations: see~\S\ref{subsec:lineq}. These were announced in~\cite{ST2} but
no proofs were given.

\subsection{Containers}\label{subsec:containers}

The main container theorem is Theorem~\ref{thm:cover} below. We define here
the terms needed, but for more discussion of them we refer the reader
to~\cite{ST2}. 

The fundamental point of the construction is that there is a function
$C:\mathcal{P}[n]\to\mathcal{P}[n]$, defined by means of an algorithm,
which, given any small set $T$ as input, produces some larger set
$C(T)$. The algorithm also ensures that, for every independent set~$I$,
there is some small $T\subset I$ with $I\subset C(T)$. This gives a
collection of containers $\mathcal{C}=\{C(T): T\mbox{ is small}\}$, where
$\mathcal{C}$ is relatively small because there are few small sets~$T$.

Of course, the terms all need to be quantified. We measure set sizes using
{\em degree measure}, the measure of $S\subset[n]$ being
$\mu(S)=(1/nd)\sum_{u\in U}d(u)$. Here $d(u)$ is the degree of the
vertex~$u$ in $G$ and $d$ is the average degree. When the graph is regular
then degree measure agrees with the uniform measure $|S|/n$, but for
non-regular graphs degree measure is more useful.

The reason for using degree measure is that, in a general $r$-uniform
hypergraph of average degree~$d$, the size of an independent set can be
arbitrarily close to~1 in uniform measure, and so uniform measure will
give no useful bound on container sizes. However (as is easily shown) the
degree measure of an independent set is at most $1-1/r$, and so there is
hope of bounding the degree measure of the containers away from~1. This
is the import of Theorem~\ref{thm:cover}~(d). The algorithm is thus
designed with degree measure in mind. Consequently the sets $T$ it produces
have $\mu(T)$ small (Theorem~\ref{thm:cover}~(b)). But for $|\mathcal{C}|$
to be relatively small we need to ensure $|T|$ is small for these
sets~$T$. The algorithm achieves this by the simple expedient of not
placing vertices of very small degree into~$T$: thus $\mu(T)$ small implies
$|T|$ is small (Theorem~\ref{thm:cover}~(c)).

As usual we define $[n]^{(s)}=\{\sigma\subset [n]:|\sigma|=s\}$ and
$[n]^{(\le s)}=\bigcup_{t\le s}[n]^{(t)}$. The degree $d(\sigma)$ of a
subset $\sigma$, where $|\sigma|\le r$, is the number of edges of $G$ that
contain~$\sigma$. We make frequent use of the definition
$d^{(j)}(\sigma)=\max\{d(\sigma'): \sigma\subset\sigma'\in[n]^{(j)}\}$,
though usually we write $d^{(j)}(v)$ instead of $d^{(j)}(\{v\})$.

A parameter $\tau$ appears in all the theorems and in the
algorithm. Roughly speaking, $\tau$ is the measure of the sets $T$, and the
smaller $\tau$ is, the smaller is $\mathcal{C}$. In
Theorem~\ref{thm:cover}, how small $\tau$ can be is determined by the {\em
  co-degree function} $\delta(G,\tau)$, defined by
$$
\delta(G,\tau)\,=\,2^{\binom{r}{2}-1}\sum_{j=2}^r\,
2^{-\binom{j-1}{2}}\delta_j
\quad{\rm where}\quad
\delta_j\,\tau^{j-1}nd\,
=\,\sum_v \,d^{(j)}(v)\,,
\quad 2\le j\le r\,.
$$
This function is identical to the one in~\cite{ST2}. Note that
$\delta(G,\tau)$ is decreasing in~$\tau$, and hence the condition
$\delta(G,\tau)\le\zeta$, which appears in Theorem~\ref{thm:cover}, is
really a lower bound on~$\tau$. The parameter $\zeta$ can be chosen to
suit, a typical value being $\zeta=1/12r!$. The function $\delta(G,\tau)$
depends on the quantities $d^{(j)}(v)$; the larger the subset degrees are
in $G$ then the larger $d^{(j)}(v)$ is likely to be, and hence $\tau$ must
be larger to achieve the bound $\delta(G,\tau)\le\zeta$.
The relationship between the subset degrees and $\tau$ is thus implicit in
the function $\delta(G,\tau)$.

We now state the main theorem.

\begin{thm}\label{thm:cover}
  Let $G$ be an $r$-graph with vertex
  set~$[n]$. Let
  $\tau,\zeta>0$ satisfy $\delta(G,\tau)\le\zeta$. Then there
  is a function $C:\mathcal{P}[n]\to\mathcal{P}[n]$, such that, for every
  independent set $I\subset[n]$ there exists $T\subset I$ with
 \begin{itemize}
 \item[(a)] $I \subset C(T)$,
 \item[(b)] $\mu(T) \le 2r\tau/\zeta$, 
 \item[(c)] $|T|\le 2r\tau n/\zeta^2$,  and
 \item[(d)] $\mu(C(T)) \le 1 - 1/r! + 4\zeta+2r\tau/\zeta$.
 \end{itemize}
 Moreover
 $C(T)\cap[w]=C(T\cap[w])\cap[w]$ for all $T\in\mathcal{P}[n]$ and  $w\in[n]$.

 Indeed, the above holds for all sets $I\subset[n]$
 such that either $G[I]$ is $\lfloor\tau^{r-1} \zeta e(G)/n\rfloor$-degenerate
 or $e(G[I])\le 2r \tau^re(G)/ \zeta$.
\end{thm}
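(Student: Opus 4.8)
The plan is to realise $C$ as a single left-to-right sweep of $[n]$, driven by the threshold functions $\theta_s$, which are computed from $G$ and $\tau$ before the sweep begins. The algorithm takes a fingerprint $P\subseteq[n]$ as a second input; it maintains a shrinking ``available'' set $A$ (initially $[n]$), a growing fingerprint $T$ (initially $\emptyset$) and a counter~$s$, and examines $v=1,2,\ldots,n$ in order. At vertex $v$: if $v\notin A$, do nothing; otherwise compute the generalised degrees $d^{(j)}(v)$ ($2\le j\le r$) of $v$ inside the part of $G$ still carried by $A$, and if these all stay below the current thresholds $\theta_s$, leave $v$ alone. If some $d^{(j)}(v)$ reaches $\theta_s$, then $v$ \emph{fires}: consult whether $v\in P$; if so, move $v$ into $T$, delete from $A$ the vertices captured by the large $j$-degree at $v$, and advance $s$; if $v\notin P$, delete $v$ from $A$. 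Output $C(P)$, the final available set $A$ (which still contains $T$). The online property is then immediate: whether and how $v$ fires depends only on $G$, the precomputed thresholds, and the portion $P\cap[v]$ of the fingerprint seen so far, so the two runs on inputs $P$ and $P\cap[w]$ agree on $[w]$; this is exactly $C(T)\cap[w]=C(T\cap[w])\cap[w]$.

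Feeding an independent set $I$ to the algorithm produces a fingerprint $T=T(I)\subseteq I$, and I would verify $I\subseteq C(T)$ thus: no vertex of $I$ is ever deleted in the ``$v\notin P$'' branch, since there $P=I$; and a vertex captured and deleted because of some $v\in T\subseteq I$ cannot lie in $I$, as together with $v$ (and available vertices) it would complete an edge inside~$I$. This is the only use of independence, and it gives~(a). Parts (b) and (c) are counting statements: each firing at level $j$ consumes a $\theta_s$-sized chunk of $\sum_v d^{(j)}(v)=\delta_j\tau^{j-1}nd$, so the number of firings, and hence $\mu(T)$, is controlled by the $\delta_j$ and $\tau$, giving~(b); and since a vertex of degree below roughly $\zeta d$ never fires into $T$, the bound on $\mu(T)$ upgrades to the bound on $|T|$ in~(c). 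These are the same counting arguments as in~\cite{ST2}, up to the bookkeeping change from tuples to a single set~$T$, so I would cite them rather than reproduce them.

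The substance is part~(d), the bound $\mu(C(T))\le 1-1/r!+4\zeta+2r\tau/\zeta$. When the sweep ends, no available vertex has a generalised degree above threshold in $G[A]$, so $G[A]$ is ``almost independent'', and an $r$-graph all of whose generalised degrees are small has degree measure at most $1-1/r!$ plus controlled error, by an iterated refinement of the elementary estimate $\mu(I)\le 1-1/r$ for independent sets; the error terms $4\zeta$ and $2r\tau/\zeta$ are precisely what the hypothesis $\delta(G,\tau)\le\zeta$ and the choice of the $\theta_s$ are calibrated to absorb. Here the degree calculations promised in the introduction enter: one checks that this single-pass algorithm, with the $\theta_s$ of~\S\ref{subsec:actual}, deletes vertices at least as fast as the multi-pass algorithm of~\cite{ST2}, so that the measure estimate there carries over verbatim — this reduces to a handful of inequalities relating $\theta_j$, $\delta_j$, $\tau$ and the subset degrees $d^{(j)}(v)$. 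I expect this comparison, together with keeping the sharp constant $1/r!$ and its exact error coefficients, to be the main obstacle; dropping sharpness lets one choose the $\theta_s$ crudely and yields only Theorem~\ref{thm:coverweak}.

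Finally, the ``Indeed'' clause: re-reading the argument for~(a), independence of $I$ was used only to rule out a captured vertex lying in $I$, and a quantitative form of that step needs merely that $G[I]$ be sparse. If $G[I]$ is $\lfloor\tau^{r-1}\zeta e(G)/n\rfloor$-degenerate, or if $e(G[I])\le 2r\tau^r e(G)/\zeta$, the same capture-and-delete analysis still gives $I\subseteq C(T(I))$ with $T(I)$ satisfying (b) and (c), so such sets are covered by the identical collection $\{C(T):T\text{ small}\}$ and (d) is untouched. I would present this as a short addendum tracking which hypothesis replaces independence at which step.
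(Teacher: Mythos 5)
Your algorithm is not the one the paper uses, and the containment step is incorrect as stated. The decisive issue is the clause ``delete from $A$ the vertices captured by the large $j$-degree at $v$, and argue that a captured vertex cannot lie in $I$ because together with $v$ it would complete an edge inside $I$.'' For $r\ge 3$, a vertex $u$ together with $v$ does not complete an edge; you would need a further $(r-2)$ vertices, and there is no reason those should lie in $I$. So the sentence ``this is the only use of independence, and it gives (a)'' is wrong: independence cannot rescue a capture-and-delete step, and an algorithm that deletes anything other than the currently inspected vertex from $C$ will in general expel members of $I$. (The same issue threatens the online property, since deleting a batch of vertices when $v$ fires makes $C\cap[w]$ depend on more than $T\cap[w]$.)

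In the paper the mechanism is different, and the role of independence is not where you put it. The algorithm never deletes any vertex other than the one currently being inspected: build mode removes $v$ from $C$ exactly when the membership rule fires at $v$ and $v\notin T$, and prune mode puts $v$ into $T$ exactly when the rule fires and $v\in I$. Because the data structure $\mathcal{D}=(P_s,\Gamma_s)$ is updated only when $v\in T$, the rule is evaluated identically in the two modes given the same $T$, so $I\subset C(T)$ holds \emph{tautologically}, with no use of independence at all. Independence (or the degeneracy/sparsity hypothesis in the ``Indeed'' clause) is used only later, to bound $|T|$: it guarantees $T\cap\Gamma_1=\emptyset$ (a vertex $v\in I\cap\Gamma_1$ would force an edge $t\cup\{v\}$ with $t\subset T\subset I$), which controls the size of $T_0$ in the estimate $\mu(T)\le 2r\tau/\zeta$. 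Your sketch also omits the auxiliary multisets $P_s$ and the saturated sets $\Gamma_s$ entirely; these, together with the two degree inequalities of Lemma~\ref{lem:sigmadeg} (which show that the precomputed thresholds emulate the multi-pass algorithm of~\cite{ST2}), are what make the verbatim transfer of \cite[Lemma~5.5]{ST2} legitimate. Without them the hard part, the sharp bound $\mu(C)\le 1-1/r!+4\zeta+2r\tau/\zeta$ in (d), has no support. I would rebuild the argument on the paper's build/prune dichotomy and the $P_s,\Gamma_s$ machinery, rather than on capturing vertices.
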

As stated previously, this theorem is nearly the same as
\cite[Theorem~3.4]{ST2}. The numerical expressions that appear are exactly
the same. The two differences are that a tuple $(T_{r-1},\ldots,T_0)$ has
been replaced by a single set~$T$ (which, though not a strengthening, does
make the theorem easier on the eye; see Remark~\ref{rem:enlarge}), and that
the online property now holds for all $r$-graphs rather than just for simple
ones.

The online property is the statement that $C(T)\cap[w]=C(T\cap[w])\cap[w]$
for all $w\in[n]$. One way to think of it is like this. Suppose the labelling of
the vertices was hidden initially, but was then revealed one vertex at a
time. If the algorithm which has to build a container $C(T)$ from $T$ has
the online property, then it will decide which members of $[w]$ will lie in
$C(T)$ just from knowing the set $T\cap[w]$. To this extent the algorithm
is behaving like an online algorithm, though it needs to be remembered that
the whole graph is known before the vertex labelling is revealed.

The online property is needed when the number of vertices $|C|$ in a
container is what matters. The main theorem gives a bound on $\mu(C)$, and
this readily supplies a bound on the number of edges $e(G[C])$ in the
container. If $G$ is regular, then this in turn leads to a bound on~$|C|$,
but there is no useful bound of this kind in general. What can be inferred,
however, is that each container $C$ can name some $v\in[n]$ for which
$|C\cap[v]|$ is bounded, and the number of containers naming any given~$v$
is small relative to~$v$. This arcane statement is expressed precisely in
\cite[Theorem~3.7]{ST2}; we don't restate the theorem here but point out
only that, as a consequence of the algorithm here, the theorem
holds for all, not just simple, graphs (and with the tuple replaced by a
single set).

We now state a somewhat weaker theorem than Theorem~\ref{thm:cover}, but
one which is just as good if the exact dependence of constants on the
parameter~$r$ is not an issue. The theorem has the twin advantages of being
easier to comprehend and being easier to prove. 

As mentioned before, the algorithm makes use of threshold
functions~$\theta_s$. The form of $\theta_s$ used to prove
Theorem~\ref{thm:cover} (given by Definition~\ref{defn:thetas}) is
carefully tuned to yield Theorem~\ref{thm:cover}~(d); this form of
$\theta_s$ in turn leads to the constraint $\delta(G,\tau)\le \zeta$ needed
to make the theorem hold.  For the weaker theorem we use more
straightforward functions $\theta_s$ (given by
Definition~\ref{defn:thetasw}). Likewise we need not be so careful about
the constraints on~$G$, resulting in a more transparent necessary
condition. Here is the weaker theorem.

\begin{thm}\label{thm:coverweak}
  Let $r\in\mathbb{N}$. Then there is a constant $c=c(r)>0$ such that the
  following holds.
  Let $G$ be an $r$-graph with average degree $d$ and vertex set~$[n]$. Let
  $0<\tau\le 1$ be chosen so that
  \begin{equation}
    d(\sigma)\le cd\tau^{|\sigma|-1}\quad \mbox{ holds for all $\sigma$,
      $|\sigma|\ge2$}\,.\tag{\dag}\label{eqn:dag}
  \end{equation}
  Then there is a function $C:\mathcal{P}[n]\to\mathcal{P}[n]$, such that,
  for every independent set $I\subset[n]$ there exists $T\subset I$ with
 \begin{itemize}
 \item[(a)] $I \subset C(T)$,
 \item[(b)] $\mu(T) \le \tau$, 
 \item[(c)] $|T|\le \tau n$,  and
 \item[(d)] $\mu(C(T)) \le 1 - c$.
 \end{itemize}
 Moreover
 $C(T)\cap[w]=C(T\cap[w])\cap[w]$ for all $T\in\mathcal{P}[n]$ and  $w\in[n]$.

 Indeed, the above holds for all sets $I\subset[n]$
 such that either $G[I]$ is $\lfloor c \tau^{r-1} d\rfloor$-degenerate
 or $e(G[I])\le c\tau^re(G)$.
\end{thm}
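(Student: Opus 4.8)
The plan is to build the function $C$ via a single-pass algorithm over the vertices $1,2,\ldots,n$, maintaining two growing sets: the "fingerprint" $T$ (a subset of the independent set $I$ we are trying to capture) and the container $C$ (the set of vertices not yet ruled out). First I would describe the algorithm's state after processing vertices $[w]$: it consists of the current pair $(T\cap[w], C\cap[w])$, where $C\cap[w]$ is determined solely by $T\cap[w]$ together with the fixed graph $G$. This is exactly what gives the online property $C(T)\cap[w]=C(T\cap[w])\cap[w]$ "for free", and it is the main structural reason for processing vertices only once. When vertex $v$ is examined, I would use the threshold functions $\theta_s$ (Definition~\ref{defn:thetasw}) applied to the "link" structure induced on the remaining vertices by the part of $T$ already selected: if the degree of some small set containing $v$ (relative to the current configuration) exceeds the relevant threshold, $v$ is placed into $T$; otherwise $v$ is examined to see whether it should be removed from $C$ (because some edge now lies inside the partial selection together with low-degree vertices). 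Crucially, since $\theta_s$ is evaluated from $G$ before the algorithm starts, no information from "future" vertices is used.

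Next I would verify the four bullet points. For (a), the key invariant is that throughout the run, if we feed in $T=T(I)$, the set $I$ is never removed from $C$: a vertex $u\in I$ is removed only when it completes an edge all of whose other vertices are already "committed" to lie in the independent-set side, contradicting independence. For (b) and (c), I would bound the total degree measure accumulated into $T$: each vertex put into $T$ is there because it had large subset-degree at the time, and a standard double-counting / potential argument (charging the threshold $\theta_s$ against the total number of edges $e(G)$, using hypothesis~(\ref{eqn:dag}) to control how often thresholds can be crossed) shows $\mu(T)\le\tau$; the restriction that only vertices of not-too-small degree enter $T$ converts $\mu(T)\le\tau$ into $|T|\le\tau n$. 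For (d), the point is that once $C$ stabilises, the induced subgraph $G[C]$ has no small set of high relative degree — that is, $G[C]$ is "flat" — and for such a graph the independent sets (in particular $I$) have degree measure bounded below by a constant, which forces the complement of $I$ inside $C$, hence $C$ itself, to have degree measure at most $1-c$.

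The main obstacle — and the place where the choice of $\theta_s$ matters — is the simultaneous control required in (b) and (d): we need the thresholds high enough that $T$ stays small (few threshold crossings, so $\mu(T)$ small), yet low enough that by the time the pass finishes, $C$ has genuinely been pruned down to a flat graph so that (d) holds with an absolute constant $c=c(r)$. Balancing these is where hypothesis~(\ref{eqn:dag}) enters quantitatively: it guarantees that the $j$-th subset degrees are all $O(d\tau^{j-1})$, which is precisely the regime in which the co-degree-type sums (the $\delta_j$ appearing in $\delta(G,\tau)$) are all bounded by a constant, so that a crude choice of $\theta_s$ suffices. I would then note that the ``indeed'' clause — handling $I$ with $G[I]$ being $\lfloor c\tau^{r-1}d\rfloor$-degenerate, or with $e(G[I])\le c\tau^r e(G)$ — follows by the same argument, since independence of $I$ is used only through these weaker sparseness conditions (the removal step only ever needs that $I$ contains no edge lying entirely in the committed part, which degeneracy or edge-sparseness of $G[I]$ also supplies once one checks that the committed part stays small).

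Finally I would remark that, modulo the degree calculations isolated here, the verification that the algorithm does at least as well as the one in~\cite{ST2} is routine, so the detailed bookkeeping for the invariants in (a)--(d) can be imported essentially verbatim from the corresponding analysis there, with the threshold functions $\theta_s$ of Definition~\ref{defn:thetasw} substituted throughout; the only genuinely new ingredient is the observation that evaluating $\theta_s$ in advance makes the single-pass algorithm online for arbitrary, not merely simple, $r$-graphs.
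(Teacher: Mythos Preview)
Your overall framework is right --- a single-pass algorithm with pre-computed thresholds, giving the online property automatically --- and your discussion of the tension between~(b) and~(d) correctly identifies where the content lies. But your argument for~(d) has a genuine gap.

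You write that once $C$ stabilises, $G[C]$ is ``flat'' (no small set of high relative degree), and that in a flat graph independent sets have degree measure bounded below, ``which forces the complement of $I$ inside $C$, hence $C$ itself, to have degree measure at most $1-c$''. This reasoning does not work. First, the bound $\mu(C(T))\le 1-c$ must hold for the container as a function of~$T$, independently of which~$I$ produced that~$T$; an argument routed through the measure of~$I$ cannot give this. Second, the algorithm does not make $G[C]$ flat in any useful sense: vertices are removed from~$C$ because some $|F_{v,s}|$ is large or $v\in\Gamma_1$, not because $G[C]$ has high-degree subsets. What the algorithm \emph{does} guarantee is information about the auxiliary multisets $P_r,\ldots,P_1$, and the actual argument for~(d) (Lemma~\ref{lem:es}) is a cascade estimate on their sizes: writing $|P_s|=e_s\tau^{r-s}nd$, one shows $e_{s+1}\le r2^s e_s + \mu(D) + O(\zeta)$ for $s\ge 2$ and $e_2\le 2\mu(D)+O(\zeta)$, where $D=([n]\setminus C)\cup T\cup B$. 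Summing these with geometric weights and using $e_r=1/r$ forces $\mu(D)$ to be bounded below by an absolute constant, whence $\mu(C)\le 1-c$. The mechanism is that either all the $P_s$ are large (so $\Gamma_1$, hence~$D$, is substantial) or some $P_{s+1}$ is large while $P_s$ is small (again forcing~$D$ large). Your proposal contains no trace of this argument.

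Two smaller points. Your invariant for~(a) --- that $u\in I$ is removed from~$C$ only when it ``completes an edge'' --- is not how the algorithm works; $I\subset C(T)$ follows structurally because $C(T)$ is exactly $T$ together with all vertices failing the membership rule, and prune mode puts into~$T$ precisely the members of~$I$ that pass the rule. And your closing remark that the bookkeeping ``can be imported essentially verbatim'' from~\cite{ST2} applies to Theorem~\ref{thm:cover}, not to Theorem~\ref{thm:coverweak}: the whole point of the latter is that it admits a self-contained proof via the weak thresholds, and that proof (Lemmas~\ref{lem:sigmadegw}--\ref{lem:es}) is what you need to supply.
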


The necessary condition in Theorem~\ref{thm:coverweak} is stronger than
that in Theorem~\ref{thm:cover}, because it involves a bound on every
$d(\sigma)$, whereas the bound implicit in the function $\delta(G,\tau)$
involves some kind of averaging. This makes no difference, though, for the
applications presented here.

Most applications do not use the main theorem directly, but an iterated
form of it, resulting in much smaller containers.  Given an independent
set~$I$, the main theorem applied to $G$ supplies an initial container
$C_1$ for~$I$. A further application to $G[C_1]$ supplies a smaller
container~$C_2\subset C_1$, yet another application to $G[C_2]$
supplies~$C_3\subset C_2$, and so on, successive applications resulting in
a container $C$ that is very sparse.  This is how
\cite[Corollary~3.6]{ST2} was obtained, and we can derive the same
corollary from Theorem~\ref{thm:cover}, but with tuples replaced by
single sets. If the precise dependence of the constants on $r$ is not
important, though, Theorem~\ref{thm:coverweak} can be used multiple times
instead, which produces the
following corollary.

\begin{cor}\label{cor:sparse_container}
  Let $r\in\mathbb{N}$ and let $\epsilon>0$. Then there is a constant
  $c=c(r,\epsilon)$ for which the following holds. Let $G$ be an $r$-graph
  of average degree~$d$ on vertex set~$[n]$. Let $0<\tau\le1$ be
  chosen so that
  $$
    d(\sigma)\le cd\tau^{|\sigma|-1}\quad \mbox{ holds for all $\sigma$,
      $|\sigma|\ge2$}\,.
  $$
  Then there is a function $C:\mathcal{P}[n]\to\mathcal{P}[n]$, such that,
  for every independent set $I\subset[n]$ there exists $T\subset I$ with
 \begin{itemize}
 \item[(a)] $I \subset C(T)$,
 \item[(b)] $|T|\le \tau n$,  and
 \item[(c)] $e(G[C])\le \epsilon e(G)$.
 \end{itemize}
 Indeed, the above holds for all sets $I\subset[n]$ such that either $G[I]$
 is $\lfloor c \tau^{r-1} d\rfloor$-degenerate or $e(G[I])\le c\tau^re(G)$.
\end{cor}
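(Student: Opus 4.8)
The plan is to iterate Theorem~\ref{thm:coverweak} a bounded number of times, the container produced at one stage serving as the ambient graph at the next, so that the number of edges drops by a constant factor each time until it falls below $\epsilon e(G)$. Let $c_1=c_1(r)$ be the constant supplied by Theorem~\ref{thm:coverweak}, fix an integer $k=k(r,\epsilon)$ with $(1-c_1)^k\le\epsilon$, and put $\tau'=\tau/(2k)$; every application of Theorem~\ref{thm:coverweak} below uses the parameter $\tau'$. Finally choose $c=c(r,\epsilon)$ small enough to validate the finitely many inequalities encountered below; tracing them, $c=c_1\epsilon(2k)^{-r}$ will do, but the precise value is immaterial. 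The purpose of this choice is a dichotomy for every $W\subset[n]$: either $e(G[W])\le\epsilon e(G)$, or else the average degree $\overline{d}_W$ of $G[W]$ exceeds $\epsilon d$ (since $e(G[W])=\overline{d}_W|W|/r\le\overline{d}_W n/r$ while $e(G)=dn/r$), and then $d_{G[W]}(\sigma)\le d_G(\sigma)\le cd\tau^{|\sigma|-1}\le c_1\overline{d}_W{\tau'}^{|\sigma|-1}$ for every $\sigma$ with $|\sigma|\ge2$, so $G[W]$ satisfies hypothesis~\eqref{eqn:dag} with $c_1$ in place of $c$ and $\tau'$ in place of $\tau$. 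Thus Theorem~\ref{thm:coverweak} applies to $G[W]$ as soon as $G[W]$ is not already sparse enough.

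First I would build the function $C$. Set $D_0=[n]$; given $D_{i-1}$, put $D_i=D_{i-1}$ if $e(G[D_{i-1}])\le\epsilon e(G)$, and otherwise --- when, by the dichotomy, Theorem~\ref{thm:coverweak} applies to $G[D_{i-1}]$ with $\tau'$ --- put $D_i=C^{\ast}(T\cap D_{i-1})$, where $C^{\ast}$ is the container function Theorem~\ref{thm:coverweak} supplies for $G[D_{i-1}]$ and $\tau'$ (a map from subsets of $D_{i-1}$ to subsets of $D_{i-1}$). Then set $C(T)=D_k$. This is well defined on all of $\mathcal{P}[n]$, because the branch taken at stage~$i$ and the graph $G[D_{i-1}]$ depend only on $D_{i-1}$, hence only on $T$; note too that once the first branch is taken the sequence is constant thereafter.

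Now fix a set $I$ as in the statement and run the construction ``by hand'' on $I$ to produce the witness. Set $C_0=[n]$; given $C_{i-1}\supset I$, put $T_i=\emptyset$, $C_i=C_{i-1}$ if $e(G[C_{i-1}])\le\epsilon e(G)$. Otherwise, by the dichotomy, $G[C_{i-1}]$ satisfies~\eqref{eqn:dag} for $\tau'$, has $\overline{d}_{C_{i-1}}>\epsilon d$, and has $e(G[C_{i-1}])>\epsilon e(G)$; and $I$ still meets the hypothesis of Theorem~\ref{thm:coverweak} for $G[C_{i-1}]$ and $\tau'$ (independence passes to induced subgraphs, and the degeneracy and edge-sparsity bounds on $G[I]$ of the ``indeed'' clause are unchanged on passing to $G[C_{i-1}]$, so the choice of $c$ --- with $\overline{d}_{C_{i-1}}>\epsilon d$, $e(G[C_{i-1}])>\epsilon e(G)$ --- secures, e.g., $c\tau^{r-1}d\le c_1{\tau'}^{r-1}\overline{d}_{C_{i-1}}$). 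Hence Theorem~\ref{thm:coverweak} yields $T_i\subset I$ with $I\subset C^{\ast}(T_i)=:C_i$, with $|T_i|\le\tau'|C_{i-1}|\le\tau n/(2k)$, and with $\mu_{G[C_{i-1}]}(C_i)\le1-c_1$. Either way $C_i\supset I$, so inductively $C_i\supset I$ for all $i$. Put $T=\bigcup_{i=1}^k T_i\subset I$; then $|T|\le\tau n/2\le\tau n$, which is~(b). For~(c): if the first branch was ever taken, say first at stage~$i_0$, then $C_k=C_{i_0-1}$ and $e(G[C_k])\le\epsilon e(G)$; otherwise $\mu_{G[C_{i-1}]}(C_i)\le1-c_1$ at every stage forces $e(G[C_i])\le(1-c_1)e(G[C_{i-1}])$ --- recall $e(G'[S])\le\mu_{G'}(S)e(G')$ for $S\subset V(G')$, since $r\,e(G'[S])\le\sum_{u\in S}d_{G'}(u)=\mu_{G'}(S)\cdot r\,e(G')$ --- so $e(G[C_k])\le(1-c_1)^k e(G)\le\epsilon e(G)$.

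It remains to check that the construction, on this input $T$, returns $C_k$; then~(a) holds (as $I\subset C_k$) together with~(c). I claim $D_i=C_i$ for all $i$, by induction, with $D_0=[n]=C_0$. When the first branch is taken at stage~$i$ both sequences stand still, so suppose the second branch is taken and $D_{i-1}=C_{i-1}$. Every $T_j\subset I\subset C_{i-1}$, so $T\cap D_{i-1}=T$, while $T_i\subset T\subset I$; and since $G[D_{i-1}]=G[C_{i-1}]$, the sets $C^{\ast}(T)$ and $C^{\ast}(T_i)$ come from running the container algorithm on the same graph, with the same parameter, from the seeds $T$ and $T_i$. The algorithm reads its seed only at the vertices it queries, and a short induction along its steps shows that at such a vertex membership in $T$ and in $T_i$ coincide --- both are equivalent to membership in $I$, using $T_i\subset T\subset I$ and the fact that $T_i$ is the fingerprint the algorithm distils from $I$. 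Hence the two runs are identical, so $D_i=C^{\ast}(T)=C^{\ast}(T_i)=C_i$; this enlargement-invariance is just what lets a single set $T$ stand in for a tuple of fingerprints (cf.\ Remark~\ref{rem:enlarge}). I expect the one place needing real care to be not any of these steps alone but the joint bookkeeping of constants: making $\tau'$ small enough that $k$ fingerprints together occupy at most $\tau n$ vertices, yet keeping $c$ small enough that~\eqref{eqn:dag}, and the hypotheses on $I$, survive every restriction, for as long as the current container has more than $\epsilon e(G)$ edges.
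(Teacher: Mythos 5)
Your proposal is correct and follows essentially the same route as the paper's own proof: iterate Theorem~\ref{thm:coverweak} a bounded number of times $k$ with a rescaled parameter $\tau'=\tau/(2k)$ (the paper uses $\tau_*=\tau/\ell$), use the average-degree lower bound $\bar d_{C_{i-1}}>\epsilon d$ valid while $e(G[C_{i-1}])>\epsilon e(G)$ to verify~\eqref{eqn:dag} and the degeneracy/sparsity hypotheses at each stage, and invoke Remark~\ref{rem:enlarge} so that the single set $T=\bigcup T_i$ reproduces each $C_i$ in build mode. The constant bookkeeping ($c=c_1\epsilon(2k)^{-r}$) matches the paper's $c=\epsilon\ell^{-r}c_*$ up to cosmetic differences.
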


We include a full proof of this corollary. It will be applied later to
prove Theorem~\ref{thm:eqn_cover}, from which nearly all our results on
linear equations will be derived. The only exception to this is a theorem
about the number of Sidon sets, for which we need a more technical version of
the corollary, Theorem~\ref{thm:iteration}. This version is very close to 
\cite[Theorem~6.3]{ST2} and so we shall not prove it.

\subsection{List colouring}\label{subsec:listcol}

Given an assignment $L:V(G)\to\mathcal{P}(\mathbb{N})$ of a list $L(v)$ of
colours to
each vertex~$v$, we say $G$ is $L$-chooseable if each vertex~$v$ can choose
a colour $f(v)\in L(v)$, such that there is no edge in which all the
vertices choose the same colour. The minimum number~$k$ such that $G$ is
$L$-choosable whenever $|L(v)|\ge k$ for every~$v$ is called the {\em
  list-chromatic} number of~$G$, denoted by $\chi_l(G)$. This notion was
introduced for graphs by Vizing~\cite{V} and by Erd\H{o}s, Rubin and
Taylor~\cite{ERT}. It was studied for Steiner systems by Haxell and
Pei~\cite{HP}, for simple regular 3-graphs by Haxell and
Verstra\"ete~\cite{HV} and for certain $r$-graphs by Alon and
Kostochka~\cite{AK1,AK2}. The case of the next theorem for simple graphs
was stated as~\cite[Theorem~2.1]{ST2}; it strengthens and extends a theorem
of Alon~\cite{A2} for 2-graphs. It is possible to obtain a lower bound on
$\chi_l(G)$ for a non-simple hypergraph by applying~\cite[Theorem~2.1]{ST2}
to a randomly chosen simple subgraph, but the theorem here gives a better
bound.

\begin{thm}\label{cor:chil}
  Let $r\in\mathbb N$ be fixed. Let $G$ be an $r$-graph with average
  degree~$d$. Suppose that $d^{(j)}(v)\le d^{(r-j)/(r-1)+o(1)}$ for every
  $v\in V(G)$ and for $2\le j\le r$, where $o(1)\to0$ as $d\to\infty$. Then
  $$
  \chi_l(G)\,\ge\,\,(1+o(1))\,\frac{1}{(r-1)^2} \log_rd \,.
  $$
  Moreover, if $G$ is regular then
  $$
  \chi_l(G)\,\ge\,\,(1+o(1))\,\frac{1}{r-1}\log_rd\,.
  $$
\end{thm}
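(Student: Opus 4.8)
The plan is to derive the list-colouring bound from the sparse container corollary (Corollary~\ref{cor:sparse_container}) by a standard counting argument, following the scheme of \cite[Theorem~2.1]{ST2}. Fix $k$ slightly below the claimed bound and suppose for contradiction that $G$ is not $L$-choosable for some assignment $L$ with $|L(v)|\ge k$ for all $v$. For each colour $c$, the set $V_c=\{v: f(v)=c \text{ is a valid choice in some colouring}\}$ is not the right object; rather, one argues that if $G$ were $L$-choosable we could pick colour classes, but since it is not, we want to \emph{count} potential colourings and show the count is positive, forcing a contradiction. The cleanest route: a colouring $f$ is proper iff each colour class $f^{-1}(c)$ is an independent set. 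So the number of proper $L$-colourings is at least the number of ways to choose, for each $v$, a colour from $L(v)$, minus the "bad" assignments where some edge is monochromatic. We bound the bad assignments using containers: if edge $e$ is monochromatic in colour $c$ under $f$, then in particular $e\subseteq f^{-1}(c)$, and each colour class lies in some container $C(T)$ with $e(G[C(T)])\le\epsilon e(G)$ and $|T|\le\tau n$.

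First I would apply Corollary~\ref{cor:sparse_container} with a suitably small $\epsilon=\epsilon(r)$ and with $\tau$ chosen as small as the hypothesis $d^{(j)}(v)\le d^{(r-j)/(r-1)+o(1)}$ permits; a short computation shows that $\tau = d^{-1/(r-1)+o(1)}$ satisfies $d(\sigma)\le cd\tau^{|\sigma|-1}$ for all $\sigma$ (since $d^{(j)}(v)$ controls $d(\sigma)$ for $|\sigma|=j$, and $d^{(r-j)/(r-1)}=d\cdot d^{-(j-1)/(r-1)}=d\tau^{j-1}$ up to the $o(1)$). This is the step that dictates the exponent $\tfrac{1}{(r-1)^2}$ eventually. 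With these parameters, each independent set $I$ is contained in some $C(T)$ with $|T|\le\tau n$ and $e(G[C(T)])\le\epsilon e(G)$; the number of such containers is at most $\binom{n}{\le\tau n}\le 2^{H(\tau)n}$, which is $d^{o(n)}$ in the relevant regime, and crucially much smaller than $k^n$ if $k$ is not too large. Since $e(G[C])\le\epsilon e(G)=\epsilon n d/r$, the number of vertices of $C$ of degree at least, say, $d/2$ within $G$ is at most $2r\epsilon n$; hence $\mu(C)\le 1-c$ and in fact most vertices of $[n]$ lie outside $C$ or have tiny degree inside it.

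The core counting step: call an $L$-colouring $f$ "captured" by the pair $(c,T)$ if $f^{-1}(c)\subseteq C(T)$. Every proper colouring is captured, for each colour $c$, by some pair $(c,T_c)$ with $|T_c|\le\tau n$. Conversely, for a fixed sequence $(T_c)_{c}$ of witness sets (one per colour used, and only $k$ colours matter per vertex, so a bounded number overall — here one must be slightly careful, but since each $v$ has $|L(v)|\ge k$, we can restrict attention to $k$ colours and a choice of $k$ container-witnesses $T_1,\dots,T_k$), the number of colourings captured by all of them is at most $\prod_v |\{c\in L(v): v\in C(T_c)\}|$. The point is that for most $v$, the list $L(v)$ has $k$ colours but $v$ lies in only a few of the $C(T_c)$ — more precisely, summing $\sum_c |C(T_c)|=\sum_c \mu\text{-weighted size} \le k(1-c)n$ (in degree measure, so really $\sum_c\sum_{v\in C(T_c)}d(v)\le k(1-c)nd$), so on average over $v$ (weighted by degree) the number of available colours is at most $(1-c)k$. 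A convexity/AM–GM estimate then gives $\prod_v(\text{available colours}) \le ((1-c)k)^{(1-o(1))n}$ or so, and multiplying by the number $\le (2^{H(\tau)n})^k = d^{o(n)}$ of choices of $(T_1,\dots,T_k)$ still leaves a bound of the form $((1-c)k)^n d^{o(n)}$, which is strictly less than $k^n$ once $k\le d^{1/(r-1)^2-o(1)}$, i.e. once $\log_r k \le (1+o(1))\tfrac{1}{(r-1)^2}\log_r d$, because then $d^{o(n)}$ does not compensate for the factor $(1-c)^n$. Since the total number of $L$-assignments is exactly $\prod_v|L(v)|\ge k^n$, and all of them would have to be non-proper (as $G$ is not $L$-choosable, \emph{no} proper colouring exists — wait, we need: the number of proper colourings is at least $\prod|L(v)|$ minus non-proper ones; the right framing is that \emph{every} assignment is captured, so the captured count is $\ge k^n$, contradiction). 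The regular case is better because then degree measure is uniform measure and one does not lose the extra factor of $r-1$ coming from the discrepancy between bounding $e(G[C])$ and bounding $|C|$; equivalently $\tau$ can be taken a power $d^{-1/(r-1)}$ directly against $|C|$, saving one factor of $r-1$ in the exponent and yielding $\tfrac{1}{r-1}\log_r d$.

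The main obstacle, and the step requiring the most care, is the bookkeeping in the counting argument: one must correctly handle the fact that a colouring uses many colours (not a bounded number), so the naive "choose one witness $T_c$ per colour" overcounts the witness space. The standard fix is to fix, for each vertex, which of its $k$ list-colours are "active" and to show that it suffices to track $k$ witness sets, but making this rigorous — and extracting the clean bound $\prod_v|\{c:v\in C(T_c)\}|\le((1-c)k)^n$ via a degree-weighted convexity argument rather than a crude one — is where the proof's weight lies. A secondary technicality is verifying that the $o(1)$ terms in the hypothesis propagate correctly through the choice of $\tau$ and through the entropy bound $2^{H(\tau)n}=d^{o(n)}$, so that they can be absorbed into the final $(1+o(1))$; this is routine but must be tracked. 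Everything else — the application of the corollary, the passage from $e(G[C])$ to a measure bound, and the final inequality $(1-c)^n d^{o(n)}<1$ — is straightforward.
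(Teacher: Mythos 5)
The paper does not give a self-contained proof of this theorem: it defers to the proof of \cite[Theorem~2.1]{ST2}, and stresses that the proof there relies on \cite[Theorem~3.7]{ST2} — the \emph{online} container theorem — which the present paper newly extends to all (not just simple) $r$-graphs. Your proposal instead routes everything through Corollary~\ref{cor:sparse_container}, and this is where the genuine gap lies.

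The difficulty you acknowledge in your last paragraph (the ``degree-weighted convexity argument'') is exactly the obstacle, and it is not bridgeable by convexity. From Corollary~\ref{cor:sparse_container}(c) you get $e(G[C])\le\epsilon e(G)$, and hence $\mu(C)\le 1-(1-\epsilon)/r$ (by counting degrees on $[n]\setminus C$). But for a non-regular $G$ the degree measure $\mu(C)$ gives no control at all on $|C|$: a container can have $\mu(C)\le 1-c$ while still containing, say, $(1-o(1))n$ vertices, all of low degree. Your counting step requires an unweighted bound $\prod_v |\{c: v\in C(T_c)\}| \le ((1-c)k)^{(1-o(1))n}$, i.e.\ a bound that ``most'' vertices in the uniform sense lie outside each container; the degree-weighted estimate $\sum_c\sum_{v\in C(T_c)}d(v)\le k(1-c)nd$ does not produce this, because there is no inequality between the degree-weighted average of $|\{c:v\in C(T_c)\}|$ and its uniform average without regularity. (Also, your intermediate claim that $e(G[C])\le\epsilon e(G)$ bounds the number of vertices of $C$ of degree $\ge d/2$ \emph{in $G$} is false — it bounds degrees in $G[C]$, not $G$.) This is precisely why the paper states, in \S\ref{subsec:containers}, that for non-regular graphs the passage from $\mu(C)$ to $|C|$ fails and the online property is required: \cite[Theorem~3.7]{ST2} replaces the crude cardinality bound by the statement that each container $C$ can name a $v\in[n]$ (vertices ordered by decreasing degree) with $|C\cap[v]|$ bounded, while the number of containers naming any given $v$ is small. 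The counting in the proof of \cite[Theorem~2.1]{ST2} is carried out inside the prefix $[v]$, where the cardinality control is available; this is what produces the factor $1/(r-1)^2$. Your approach, run carefully, recovers the regular case (where degree and uniform measure coincide, giving the cleaner $1/(r-1)$), but it cannot reach the general bound without importing the online machinery. The hints the paper gives for the degree calculations — $d^{(j)}(v)\le d^{(r-j)/(r-1)}\zeta^{-1}\Rightarrow\delta(G,\tau)\le\zeta$, and $d^{(2)}(v)\le d^{(r-2)/(r-1)}\zeta^{-1}\Rightarrow\mu([k])\le\zeta/2r!$ — are exactly the verifications needed to feed into \cite[Theorem~3.7]{ST2}, not Corollary~\ref{cor:sparse_container}.

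Your choice $\tau=d^{-1/(r-1)+o(1)}$ and the accompanying verification that $d(\sigma)\le cd\tau^{|\sigma|-1}$ follows from the hypothesis are correct, and the overall ``one witness per colour, entropy count of witness tuples'' frame is the right shape. But to complete the non-regular case you need to replace the container-size step by the online-naming argument of \cite[Theorem~3.7]{ST2}.
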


The proof of \cite[Theorem~2.1]{ST2} illustrates how to prove theorems of
this kind and so we do not prove Theorem~\ref{cor:chil} here. We remark
only that it uses~\cite[Theorem~3.7]{ST2}, which can now be applied to all
$r$-graphs and not just simple ones, due to the online property mentioned
above in~\S\ref{subsec:containers}. For readers keen to check the details,
we note that, given the choice of $\zeta$, $\tau$ and $k$ in the proof, the
condition $d^{(j)}(v)\le d^{(r-j)/(r-1)}\zeta^{-1}$ implies that
$\delta(G,\tau)\le \zeta$ as needed, and that $d^{(2)}(v)\le
d^{(r-2)/(r-1)}\zeta^{-1}$ implies $d(v)\le nd^{(r-2)/(r-1)}\zeta^{-1}$
from which $\mu([k])\le
(1/nd)knd^{(r-2)/(r-1)}\zeta^{-1}\le\zeta^5\le\zeta/2r!$ follows. The rest
of the proof is identical.

\subsection{Linear equations}\label{subsec:lineq}

A subset $S\subset[n]$ is said to be {\em sum-free} if there is no solution
to $x+y=z$ with $x,y,z\in[n]$. Cameron and Erd\H{o}s conjectured that the
number of sum-free sets is $O(2^{n/2})$, and this conjecture is prominent
in the history of the container method. Alon~\cite{A3}, Calkin~\cite{C},
and Erd\H{o}s and Granville (unpublished) each proved there are at most
$2^{n/2+o(n)}$ sum-free sets. Green~\cite{G0} and Sapozhenko~\cite{Sap3}
proved the conjecture. Sapozhenko's proof makes use of containers for
2-graphs. Green's argument is of interest here because he highlighted what
was, in effect, the usefulness of containers for 3-uniform hypergraphs. The
argument in~\cite{G0} is explicitly split into two parts. In the first
part, a collection $\mathcal{C}$ of sets is found such that each sum-free
set is a subset of a member of $\mathcal{C}$, each $C\in\mathcal{C}$ has
very few
solutions $x+y=z$ with $x,y,z\in C$, and $\mathcal{C}$ is small,
specifically, $|\mathcal{C}|=2^{o(n)}$. In the present terminology
$\mathcal{C}$ is simply a set of containers for the 3-uniform hypergraph
whose edges are the triples $\{x,y,z\}$ with $x+y=z$. (Indeed we have
changed Green's notation $\mathcal{F}$ to our $\mathcal{C}$ for
consistency.)
In the second part of
the argument, a detailed inspection of the containers leads to a proof of
the conjecture.

Green produced his containers by the granularization technique developed by
Green and Ruzsa~\cite{GR}. The containers are small perturbations of unions
of arithmetic progressions, and a container is found for each sum-free
set by means of Fourier techniques. The method perhaps gives more
containers than what is given by the method of~\cite{ST3} or the present
method, but that is irrelevant in the context.

Here we are interested in more general systems of linear equations, of the
form $A x = b$, where $A$ is a $k \times r$ matrix with entries in $F$, $x
\in F^r$ and $b \in F^k$; $F$ itself might be either a finite field or the
set of integers $[N]$. We include also the possibility that $F$ is an
abelian group: in this case, $A$ should have integer entries, where
integer-group multiplication $a x$, $a \in \mathbb{Z}$, $x \in F$, is $a$
copies of $x$, $x + \cdots + x$; or $-a$ copies of $-x$ if $a$ is
negative. We call a triple $(F,A,b)$ of this kind a $k\times r$ {\em linear
  system}.

\begin{defn}\label{defn:exteqnoX}
For a $k\times r$ linear system $(F,A,b)$, a subset $I \subset F$ is
\emph{solution-free} if there is no $x \in I^r$ with $Ax=b$.
\end{defn}

It takes little imagination to think that a container theorem for
solution-free sets might come in handy. Such a theorem can be obtained if
we write down some $r$-graph $G$ whose edges represent solutions and whose
independent sets represent sum-free sets. Then
Corollary~\ref{cor:sparse_container} will supply containers; all we need do
is to compute the subset degrees~$d(\sigma)$. These degrees, and hence the
number of containers, depend on a parameter $m_F(A)$ discussed by R\"odl
and Ruci\'nski~\cite{RR}; we defer the details
(Definition~\ref{def:mA}). The definition requires $A$ to satisfy a mild
condition, but a necessary one (see~\S\ref{sec:sumfree}).

\begin{defn}
 We say that $A$ has \emph{full rank} if given any $b \in
F^k$ there exists $x \in F^r$ with $Ax=b$. We then say that $A$ is
\emph{abundant} if it has full rank and every $k \times (r-2)$ submatrix
obtained by removing a pair of columns from $A$ still has full rank.
\end{defn}

(This definition of full rank might be non-standard.) The resultant
container theorem, for systems with abundant matrices, is
Theorem~\ref{thm:eqn_cover}. We illustrate it with two applications, one to
the number of solution-free subsets and the other to the size of
solution-free subsets in sparse randomly chosen sets. The first requires
only that the number of containers be $2^{o(n)}$, but the second needs a
much smaller collection.

The containers produced by Theorem~\ref{thm:eqn_cover} are not
solution-free but are nearly so. For the applications, it is necessary that
this property implies the containers $C$ are not much larger than maximum
solution free sets. Roughly speaking, we would like to say if $C^r$
contains $o(|F|^{r-k})$ solutions to the system $(F,A,b)$ then
$|C|\le\mbox{ex}(F,A,b)+o(|F|)$, where $\mbox{ex}(F,A,b)$ is the maximum
size of a solution-free subset for the linear system $(F,A,b)$. To make this
precise we use the following definition. A {\em null} function
$f:\mathbb{R}^+\to\mathbb{R}^+$ is one such that $f(x)\to0$ as $x\to0$.

\begin{defn}\label{defn:supersat}
  Let $f:\mathbb{R}^+\to\mathbb{R}^+$ be null. The $k\times r$ linear
  system $(F,A,b)$ is said to be $f$-{\em supersaturated} if, whenever
  $X\subset F$ contains at most $\eta|F|^{r-k}$ solutions to $Ax=b$, then
  $|X|\le \mbox{ex}(F,A,b) + f(\eta)|F|$, where $\mbox{ex}(F,A,b)$ is the
  maximum size of a solution-free subset for $(F,A,b)$.
\end{defn}

Obviously every system, being finite, is $f$-supersaturated for some
null~$f$, so the definition has content only when $f$ does not depend on
$(F,A,b)$ directly.  Systems of interest are often supersaturated but the
verification generally requires a removal lemma. More is said about this
in~\S\ref{subsec:supersat}. Our first application of containers to linear
systems is to estimate the number of solution-free sets, in a similar way
to the weak versions of the Cameron-Erd\H{o}s conjecture; if $(F,A,b)$ is
supersaturated and $A$ is abundant, then there are
$2^{{\rm ex}(F,A,b)+o(|F|)}$ solution-free sets. There are several results
of this nature in the literature: we mention only that
Sapozhenko~\cite{Sap6} obtained one by a container argument applied with a
theorem of Lev, \L{u}czak and Schoen~\cite{LLS} (see~\cite{Sap5}).

\begin{thm}\label{thm:eqn_count}
  Let $k,r\in\mathbb{N}$, let $f:\mathbb{R}^+\to\mathbb{R}^+$ be null and
  let $\epsilon>0$. Then there exists $c=c(k,r,f,\epsilon)$ (or
  $c=c(A,f,\epsilon)$ in the case $F=[N]$) such that, if $(F,A,b)$ is a $k
  \times r$ $f$-supersaturated linear system with $|F|>c$, and~$A$ is
  abundant, then the number of solution-free subsets of $F$ is
  $2^{{\rm ex}(F,A,b)+\lambda|F|}$, where $0\le\lambda<\epsilon$.
\end{thm}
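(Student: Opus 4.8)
The plan is to deduce Theorem~\ref{thm:eqn_count} from the container theorem for linear systems, Theorem~\ref{thm:eqn_cover}, combined with the supersaturation hypothesis. First I would fix a small parameter $\eta>0$ with $f(\eta)<\epsilon/2$ (possible since $f$ is null), and invoke Theorem~\ref{thm:eqn_cover} to obtain a collection $\mathcal{C}$ of containers with the properties that every solution-free set $I\subset F$ lies in some $C\in\mathcal{C}$, that each $C$ contains at most $\eta|F|^{r-k}$ solutions to $Ax=b$, and — crucially for the count — that $\mathcal{C}$ is small, say $|\mathcal{C}|\le 2^{o(|F|)}$ as $|F|\to\infty$. (The precise quantitative bound on $|\mathcal{C}|$ is exactly the content of Theorem~\ref{thm:eqn_cover} and rests on $A$ being abundant, which controls the subset degrees $d(\sigma)$ of the associated $r$-graph, so that Corollary~\ref{cor:sparse_container} applies.)

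Next I would apply the $f$-supersaturation hypothesis: since each container $C$ has at most $\eta|F|^{r-k}$ solutions, we get $|C|\le\mathrm{ex}(F,A,b)+f(\eta)|F|\le\mathrm{ex}(F,A,b)+(\epsilon/2)|F|$ for every $C\in\mathcal{C}$. Then the number of solution-free subsets of $F$ is at most
\[
\sum_{C\in\mathcal{C}}2^{|C|}\;\le\;|\mathcal{C}|\cdot 2^{\mathrm{ex}(F,A,b)+(\epsilon/2)|F|}\;\le\;2^{o(|F|)}\cdot 2^{\mathrm{ex}(F,A,b)+(\epsilon/2)|F|}\,,
\]
which is at most $2^{\mathrm{ex}(F,A,b)+\epsilon|F|}$ once $|F|>c$ for a suitable threshold $c=c(k,r,f,\epsilon)$ absorbing the $o(|F|)$ term. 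For the lower bound $\lambda\ge0$ note trivially that every subset of a single maximum solution-free set is solution-free, giving at least $2^{\mathrm{ex}(F,A,b)}$ such sets, hence $\lambda\ge0$. This establishes $0\le\lambda<\epsilon$.

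A few points need care. One must check that the number of containers really is $2^{o(|F|)}$: this is where the hypothesis that $A$ is abundant does its work, ensuring (via the degree computations deferred to the later sections, around Definition~\ref{def:mA}) that one can take $\tau=\tau(|F|)\to0$ slowly enough that both $d(\sigma)\le cd\tau^{|\sigma|-1}$ holds and $|T|\le\tau n$ forces $|\mathcal{C}|\le\sum_{t\le\tau|F|}\binom{|F|}{t}=2^{o(|F|)}$. One also needs the containers to be iterated (using Corollary~\ref{cor:sparse_container} or Theorem~\ref{thm:eqn_cover} rather than the single-step Theorem~\ref{thm:cover}) so that the number of solutions inside each $C$ is genuinely a small fraction $\eta$ of $|F|^{r-k}$ — a single application only pushes $\mu(C)$ below $1-1/r!$, which is not enough to invoke supersaturation. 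In the case $F=[N]$ the constant is allowed to depend on $A$ itself rather than just on $k,r$; this is because the relevant parameter $m_F(A)$ of R\"odl--Ruci\'nski can behave differently for integer systems, so the threshold $|F|>c$ and the rate at which $\tau$ may tend to $0$ may depend on the actual entries of $A$.

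The main obstacle is the first step: verifying that the $r$-graph $G$ encoding solutions to $Ax=b$ satisfies the degree conditions of Corollary~\ref{cor:sparse_container} with a parameter $\tau$ small enough to make $|\mathcal{C}|=2^{o(|F|)}$, and simultaneously small enough (after iteration) that each container carries at most $\eta|F|^{r-k}$ solutions. This amounts to the subset-degree estimates bundled into Theorem~\ref{thm:eqn_cover} and the role of abundance in bounding $m_F(A)$ away from the degenerate range; everything after that — the union bound over $\mathcal{C}$ and the invocation of supersaturation — is routine.
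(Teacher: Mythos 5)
Your proposal is correct and follows essentially the same route as the paper's own argument: choose $\eta$ small so $f(\eta)<\epsilon/2$, invoke Theorem~\ref{thm:eqn_cover} for the container collection $\mathcal{C}$, use supersaturation to bound each $|C|$ by ${\rm ex}(F,A,b)+(\epsilon/2)|F|$, bound $|\mathcal{C}|$ via condition~(c) and $1/m_F(A)>0$, and finish with a union bound; the lower bound is also as in the paper. The only cosmetic difference is that the paper actually proves the stronger Theorem~\ref{thm:eqn_countz} (allowing a discount set $Z$, hence the slack $f(2\eta)$ there) and deduces Theorem~\ref{thm:eqn_count} as the special case $Z=\emptyset$.
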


Our second application is to the size of solution-free subsets within
randomly chosen subsets $X\subset F$. Let the elements of $X$ be chosen
independently at random with probability~$p$. Clearly one might expect to
find a solution-free subset of size at least $p\,{\rm ex}(F,A,b)$ within~$X$,
and it turns out that if $p$ is not too small then this is the largest that
a solution-free subset of $X$ can be. In a proof of this by the container
method, the size of $\mathcal{C}$ is the factor that determines how small a
$p$ the proof holds for. The full statement appears in
Theorem~\ref{thm:eqn_sparse}, and it requires that $(F,A,b)$ has the
supersaturation property. Nevertheless we state a special case here,
because in this case supersaturation is easily verified by a simple density
argument (no removal lemma is needed). For $\ell=3$ the theorem was proved
by Kohayakawa, \L{u}czak and R\"odl~\cite{KLRR}.

\begin{thm}[Conlon and Gowers~\cite{CG},
  Schacht~\cite{Sch}]\label{thm:szem_sparse}
Let $\ell\ge3$ and $\epsilon>0$. There exists a constant $c>0$ such that
for $p\ge cN^{-1/(\ell-1)}$, if $X \subset [N]$ is a random
subset chosen with probability $p$, then with probability tending to $1$ as
$N \to \infty$, any subset of $X$ of size $\epsilon|X|$ contains an arithmetic
progression of length $\ell$.
\end{thm}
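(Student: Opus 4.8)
The plan is to feed the hypergraph of $\ell$-term progressions into Corollary~\ref{cor:sparse_container}, use Szemer\'edi's theorem in its supersaturated (Varnavides) form to force the resulting containers to be small, and then run a union-bound argument over the container family; one could instead quote the general sparse container theorem for linear systems, Theorem~\ref{thm:eqn_sparse}, but the reason for recording this special case separately is precisely that the supersaturation it requires is immediate, so I argue directly. Throughout $\ell$ and $\epsilon$ are fixed and all constants may depend on them. Let $G$ be the $\ell$-uniform hypergraph on $[N]$ whose edges are the $\ell$-term arithmetic progressions in $[N]$, so the independent sets of $G$ are exactly the progression-free subsets. A routine count gives $e(G)=(1+o(1))N^2/(2(\ell-1))$, so the average degree is $d=\ell\,e(G)/N=\Theta(N)$, while $d(\sigma)\le\binom{\ell}{2}$ whenever $|\sigma|\ge2$, since fixing which two positions two members of $\sigma$ occupy in a progression pins the progression down. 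Hence, for a suitable constant $K$ (depending on $\ell$ and on the constant that Corollary~\ref{cor:sparse_container} returns for the parameter $\epsilon_0$ fixed below), the value $\tau:=KN^{-1/(\ell-1)}$ makes the hypothesis $d(\sigma)\le c\,d\,\tau^{|\sigma|-1}$ of that corollary hold for all $|\sigma|\ge2$ --- the binding case being $|\sigma|=\ell$, which is where the exponent $1/(\ell-1)$ in the theorem comes from --- and $\tau\le1$ for $N$ large.

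To fix $\epsilon_0$ I would invoke supersaturation: by Szemer\'edi's theorem, for every $\delta>0$ there is $\beta(\delta)>0$ such that, for $N$ large, every subset of $[N]$ of size at least $\delta N$ contains at least $\beta(\delta)N^2$ $\ell$-term progressions. Put $\delta:=\epsilon/16$ and take $\epsilon_0>0$ small enough that $\epsilon_0\,e(G)<\beta(\delta)N^2$ for $N$ large. Corollary~\ref{cor:sparse_container} then supplies $C:\mathcal{P}[N]\to\mathcal{P}[N]$ such that every progression-free $I\subseteq[N]$ admits $T\subseteq I$ with $|T|\le\tau N$ and $e(G[C(T)])\le\epsilon_0\,e(G)<\beta(\delta)N^2$; as $C(T)$ thus contains fewer than $\beta(\delta)N^2$ progressions, the Varnavides bound forces $|C(T)|<\delta N$. (This is the step the discussion preceding the statement calls ``easily verified by a simple density argument'': no removal lemma is needed.)

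Now the probabilistic step, in which the constant $c$ of the theorem is chosen large in terms of $K,\epsilon,\ell$. Let $p\ge cN^{-1/(\ell-1)}$ and let $X\subseteq[N]$ be $p$-random; with probability $1-o(1)$ we have $|X|\ge pN/2$. If some progression-free $I\subseteq X$ had $|I|\ge\epsilon|X|$, then with $T\subseteq I\subseteq X$ and $C(T)\supseteq I$ as above we would have $T\subseteq X$, $|T|\le\tau N\le\epsilon pN/4$ (using $c$ large), $|C(T)|<\epsilon N/16$, and $|C(T)\cap X|\ge|I|\ge\epsilon pN/2$. Summing over all $T$ with $|T|\le\tau N$, weighting by $\mathbb{P}(T\subseteq X)=p^{|T|}$, and applying a Chernoff bound to $|X\cap(C(T)\setminus T)|$ --- whose mean is below $\epsilon pN/16$, so exceeding $\epsilon pN/4$ has probability $e^{-\Omega(\epsilon pN)}$ --- bounds the probability that such an $I$ exists by
$$
e^{-\Omega(\epsilon pN)}\sum_{t\le\tau N}\binom{N}{t}p^{t}\ \le\ e^{-\Omega(\epsilon pN)}\,(\tau N+1)\,(ep/\tau)^{\tau N}.
$$
Writing $\lambda:=pN^{1/(\ell-1)}\ge c$ (so that $pN=\lambda N^{(\ell-2)/(\ell-1)}$, $\tau N=KN^{(\ell-2)/(\ell-1)}$ and $p/\tau=\lambda/K$), the exponent on the right equals $N^{(\ell-2)/(\ell-1)}\,(K\log(e\lambda/K)-\Omega(\epsilon)\lambda)+O(\log N)$. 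The bracketed function of $\lambda$ is decreasing once $\lambda$ exceeds a constant, so for $c$ large it is at most its value at $\lambda=c$, which is negative; since $N^{(\ell-2)/(\ell-1)}\gg\log N$ for $\ell\ge3$, the whole expression tends to $-\infty$. Hence the bad event has probability $o(1)$, which is the assertion of the theorem.

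The delicate point --- and the reason one wants $c$ large rather than merely positive --- is this last balance. The container family has size at most $\binom{N}{\le\tau N}$, whose logarithm is of order $\tau N\log N$, exceeding the exponent $\Theta(pN)$ of the Chernoff term by a logarithmic factor at the threshold $p\asymp N^{-1/(\ell-1)}$. What saves the argument is that with the $p^{|T|}$ weights the relevant logarithm becomes $\log(p/\tau)=\log(\lambda/K)$, merely a constant when $p$ is near the threshold, and that enlarging $c$ pushes $\lambda=pN^{1/(\ell-1)}$ into the range where the gain $\Omega(\epsilon)\lambda$ dominates $K\log(e\lambda/K)$; against that, the degree computations for $G$ and the Varnavides estimate are entirely routine.
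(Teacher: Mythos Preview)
Your argument is correct. The paper's own proof is a one-line application of Theorem~\ref{thm:eqn_sparse}: encode $\ell$-term progressions as the linear system $(F,A,b,Z)$, observe $m_{[N]}(A)=\ell-1$, invoke Varnavides for $f$-supersaturation, and read off the conclusion since $\mbox{ex}(F,A,b)=o(N)$. You instead unpack that black box: you apply Corollary~\ref{cor:sparse_container} directly to the $\ell$-uniform progression hypergraph on $[N]$ (rather than the $r$-partite hypergraph $G(F,A,b,Z)$ the paper builds en route to Theorem~\ref{thm:eqn_cover}), use Varnavides in the same way to bound $|C(T)|$, and then carry out by hand the $p^{|T|}$-weighted union bound that the paper has abstracted into Lemma~\ref{lem:sparse}. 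The content is identical --- same containers, same supersaturation, same trick of exploiting $T\subset I\subset X$ to kill the logarithmic loss --- and your direct route makes the mechanism more visible for this particular case, at the cost of repeating work that the paper has already packaged for the general linear-system setting.
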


Note that the bound on $p$ here is best possible (up to the value of~$c$),
as indicated by the fact that if $p=o(N^{-1/(\ell-1)})$ then $X$ contains
(in expectation) many fewer than $|X|$ arithmetic progressions and hence
contains a large subset free of them.

As well as these two applications of Theorem~\ref{thm:eqn_cover}, we prove
a bound on the number of Sidon sets, which are sets $S\subset[n]$ for which
every sum of two elements is distinct, i.e., there are no solutions to
$w+x=y+z$ with $\{w,x\}\ne\{y,z\}$. Erd\H{o}s and Tur\'an~\cite{ET} proved
that $|S|\le (1+o(1))\sqrt{n}$, and Cameron and Erd\H{o}s~\cite{CE} raised
the question of how many Sidon sets there are.

\begin{thm}\label{thm:sidon}
There are between $2^{(1.16+o(1))\sqrt{n}}$ and $2^{(55+o(1))\sqrt{n}}$
Sidon subsets of~$[n]$.
\end{thm}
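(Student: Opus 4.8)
The proof of Theorem~\ref{thm:sidon} splits into a lower bound and an upper bound, and the two halves are entirely different in character.

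\medskip

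\noindent\textbf{The upper bound.} The plan is to model Sidon sets as independent sets in a suitable hypergraph and apply the iterated container corollary. Let $G$ be the $4$-uniform hypergraph on $[n]$ whose edges are the $4$-sets $\{w,x,y,z\}$ supporting a nontrivial solution of $w+x=y+z$; strictly one wants to capture \emph{all} solutions, so it is cleaner to work with the ordered linear system $w+x-y-z=0$ and the associated $r$-graph (with $r=4$, $k=1$) as in the setup of \S\ref{subsec:lineq}, noting that the matrix $A=(1,1,-1,-1)$ is abundant (full rank is clear, and deleting any two columns still leaves a nonzero $1\times 2$ matrix of full rank). The number of solutions of $w+x=y+z$ in $[n]$ is $\Theta(n^3)$, so $e(G)=\Theta(n^3)$ and the average degree is $d=\Theta(\sqrt n\,)$ in the sense relevant to the degree-measure normalisation; more importantly one computes the subset degrees $d(\sigma)$ for $|\sigma|\ge 2$ and checks they satisfy the hypothesis $d(\sigma)\le cd\tau^{|\sigma|-1}$ of Corollary~\ref{cor:sparse_container} with $\tau=n^{-1/2+o(1)}$ (a pair of elements lies in $O(1)$ solutions once we fix two coordinates generically, a triple in $O(1)$, etc.; this is exactly the kind of co-degree count that makes $m_F(A)$ come out to the right value). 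Applying Corollary~\ref{cor:sparse_container} with $\epsilon\to 0$ slowly yields a family $\mathcal C$ of containers, each with $e(G[C])\le\epsilon e(G)=o(n^3)$, and with the number of containers bounded by $\sum_{t\le\tau n}\binom{n}{t}=2^{o(\sqrt n\,)}$ since $\tau n=n^{1/2+o(1)}$. The final step is supersaturation for $w+x=y+z$: a set $C$ with only $o(n^3)$ solutions has $|C|\le(1+o(1))\sqrt n$. This follows from the Erd\H{o}s--Tur\'an bound together with a counting/removal argument (a set of size $(1+\delta)\sqrt n$ forces $\gg_\delta n^{3/2}\cdot\sqrt n=n^2$ solutions, vastly more than $o(n^3)$ permits) — so each container, and hence each Sidon set, has size at most $(1+o(1))\sqrt n$, giving at most $2^{o(\sqrt n\,)}\cdot\sum_{s\le(1+o(1))\sqrt n}\binom{n}{s}=2^{(1+o(1))\sqrt n\log_2 n\cdot(1/2)\cdot(\dots)}$. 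This last bound is too weak as stated, so the genuine argument must instead feed the containers into the more technical iteration Theorem~\ref{thm:iteration} (the analogue of \cite[Theorem~6.3]{ST2}), balancing the number of containers against the number of Sidon subsets of a given container; optimising this trade-off is what produces the constant $55$.

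\medskip

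\noindent\textbf{The lower bound.} Here one exhibits many Sidon sets. The plan is to take a random subset: pick each element of $[n]$ independently with probability $p=n^{-1/2}\cdot g(n)$ for a slowly growing (or shrinking) $g$, and delete one element from each nontrivial solution of $w+x=y+z$ that survives. The expected number of surviving solutions is $\Theta(p^4 n^3)=\Theta(g(n)^4)$, and deleting $O(g(n)^4)$ elements from a random set of expected size $pn=g(n)\sqrt n$ still leaves a Sidon set of size $(1-o(1))g(n)\sqrt n$ provided $g(n)^4=o(g(n)\sqrt n)$, i.e. $g(n)=o(n^{1/6})$. The number of such Sidon sets is then at least, roughly, the number of subsets of size $s:=c\sqrt n$ that are Sidon, and a second-moment / deletion argument shows a positive proportion of the $\binom{n}{s}$-many $s$-sets survive the cleaning, giving $2^{(c'+o(1))\sqrt n\log_2 n\cdot\dots}$ — again too crude. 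To land the constant $1.16$ one does not count all $s$-subsets but restricts to a cleverly chosen dense structured set known to contain many Sidon subsets: a perfect difference set or a Singer-type construction inside $[n]$ of size $\sim\sqrt n$, all of whose subsets are automatically Sidon, combined with a counting of Sidon subsets of size $\alpha\sqrt n$ for the optimal $\alpha$. Concretely, one uses that a $B_2$-set (Sidon set) $P$ of size $m\sim\sqrt n$ has the property that \emph{every} subset is Sidon, so there are $2^m=2^{(1+o(1))\sqrt n}$ immediately; the improvement to $1.16$ comes from showing that one can merge a positive-density random subset of $[n]$ with such a structured core, or from a more refined construction — this is the combinatorial heart of the lower bound and the place where the precise constant is extracted.

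\medskip

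\noindent\textbf{Main obstacle.} The routine part is setting up $G$ and verifying the co-degree hypothesis of Corollary~\ref{cor:sparse_container} — this is a short calculation. The real work, for \emph{both} bounds, is the extraction of the explicit constants $1.16$ and $55$: the upper bound requires carefully iterating the container theorem (via Theorem~\ref{thm:iteration}) and optimising the number-of-containers versus subsets-per-container trade-off, while the lower bound requires an explicit construction or counting argument tuned to beat the trivial $2^{(1+o(1))\sqrt n}$. I expect the upper-bound optimisation to be the most delicate: one must track how $\epsilon$, the number of iteration steps, and the resulting container size interact, and show the sparse containers are close enough to size $\sqrt n$ that summing $2^{|\mathcal C|}$-style bounds over them yields the exponent $55\sqrt n$ rather than something with a $\log n$ factor. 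Everything else reduces to supersaturation for $w+x=y+z$ (an easy removal-type estimate given Erd\H{o}s--Tur\'an) and standard probabilistic deletion.
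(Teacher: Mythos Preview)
Your overall architecture for the upper bound is right --- build the $4$-graph on $[n]$ with edges $\{w,x,y,z\}$ satisfying $w+x=y+z$, and apply the iterated container theorem (specifically Theorem~\ref{thm:iteration}, as you correctly realise Corollary~\ref{cor:sparse_container} is too crude). But several of your quantitative claims are wrong and one of them matters. First, the codegree of a pair is $\Theta(n)$, not $O(1)$: given $w,x$, the equation $w+x=y+z$ has about $n$ solutions $(y,z)$. Second, your supersaturation estimate is off by a large factor: the convexity argument gives $e(G[U])\ge |U|^4/(20n)$, so a set of size $(1+\delta)\sqrt n$ has only $\Theta_\delta(n)$ solutions, not $n^2$. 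This means you must drive the containers down to $e(G[C])=O(n)$ (not merely $o(n^3)$) before you can conclude $|C|=O(\sqrt n)$, and even then the implicit constant is huge (the paper gets $|C|\le \beta\sqrt n$ with $\beta=3\times 10^{14}$, not $(1+o(1))\sqrt n$). The real content you are missing is the choice $\tau(U)=\max\{c_1 u^2/m,\,(c_2 u/m)^{1/3}\}$ with $u=|U|$, $m=e(G[U])$: the two branches come from the $\delta_2$ and $\delta_4$ constraints respectively, and there is a genuine regime change (around $|U|\approx n^{2/3}$) in which term dominates. Bounding $f(m)=\max\{-|U|\tau(U)\log\tau(U)\}$ separately in each regime and summing the geometric-type series over the iteration is what produces the constant $55$.

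For the lower bound you have not found the construction. The paper does \emph{not} use random deletion or merging with a random set; it uses translates of a modular Sidon set. Take a Sidon set $S\subset[p(p-1)]$ of size $p-1$ that is Sidon modulo $p(p-1)$ (Ruzsa's construction, or Singer/Bose), set $n=4p(p-1)$, and for each element of $S$ independently choose one of five options: place it in block $0$, block $1$, block $2$, block $3$ (shifted by $0,\,p(p-1),\,2p(p-1),\,3p(p-1)$), or omit it. The modular Sidon property guarantees every resulting set is Sidon in $[n]$, yielding $5^{p-1}=5^{(1+o(1))\sqrt n/2}>2^{1.16\sqrt n}$ Sidon sets. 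Your random-deletion sketch cannot beat $2^{(1+o(1))\sqrt n}$ and your ``merge with a random subset'' idea is not an argument.
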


In particular there are more than $2^{(1+o(1))\sqrt{n}}$ Sidon sets. The
upper bound comes from a direct application of a standard container
argument: Kohayakawa, Lee, R\"odl and Samotij~\cite{KLRS} have a finer
argument (with a better constant).

\section{The algorithm}

We remark at the outset that the construction nowhere makes use
of the fact that $I$ is independent. Indeed we shall take advantage of this
fact to build containers for sparse sets. The independence, or sparsity,
comes into play only later, in the calculation of the number of containers
required and of their sizes.

As in~\cite[Section~4]{ST2}, the process for constructing containers can be
described in terms of an algorithm with two slightly different modes,
``prune'' mode and ``build'' mode. In prune mode, the algorithm takes as
input a set~$I$ and outputs a subset $T\subset I$. In build mode, the
algorithm takes as input some set~$T$ and outputs a set~$C$. The set $C$ is
thus a function of $T$ and we can emphasise this by writing $C=C(T)$. The
two modes of the algorithm should co-operate in the following sense, that
if the set $T$ input to build mode is the one output by prune mode with
input~$I$, then $I\subset C(T)$ must hold.

Both modes of the algorithm have available the hypergraph $G$ together with
an enumeration, or labelling, of the vertex set, which we take to
be~$[n]$. As far as the algorithm is concerned, there is nothing special
about the enumeration; changing the enumeration might change which sets
actually become containers, but the properties of them, as described in
Theorem~\ref{thm:cover}, remain the same. This comment covers the online
property also. (Note, however, that the only application to date of the
online property is~\cite[Theorem~3.7]{ST2}, described
in~\S\ref{subsec:containers}; this application uses the containers produced
when the enumeration is by order of decreasing degree.)

\subsection{General properties of the algorithm}\label{subsec:goldenrule}

Prune mode initialises $T=\emptyset$ and build mode initialises
$C=[n]$. Both modes of the algorithm then run through the vertices one by
one in order, and check, for each vertex $v$, whether $v$ satisfies a
membership rule; the same rule must be used in each mode of the algorithm.
If the rule is not satisfied, neither mode takes any action and the
algorithm moves on to the next vertex. If, however, the rule is satisfied
then prune mode adds $v$ to $T$ if $v\in I$, whereas build mode removes $v$
from $C$ if $v\notin T$.

It can be seen that, whatever rule is used, prune mode outputs the set $T$
of members of $I$ that satisfy the rule, and build mode outputs the set $C$
comprising $T$ together with all vertices not satisfying the rule. So
plainly, if $T$ is the set output by prune mode with input~$I$, then
$I\subset C(T)$, as required.

Notice that, at the point when the vertex~$v$ is inspected, both modes of
the algorithm know the set $T\cap [v-1]$, this being the members of $T$
that lie in the range $1,\ldots,v-1$ of vertices that have been examined so
far.  It is permissible, therefore, for the membership rule to depend on
this set, because both modes of the algorithm will be able to evaluate the
rule in the same way. We shall express this dependence as follows. There
will be some data structure $\mathcal{D}$ (say, a collection of sets or
hypergraphs), that is initialised at the start of the algorithm and which is
updated whenever a vertex $v$ is added to~$T$. The rule can then depend
on~$\mathcal{D}$. To be more precise, when $v$ is inspected, $\mathcal{D}$
is a function of $T\cap [v-1]$, and the membership rule is a function of
$\mathcal{D}$ and of $v$. We remind the reader here that complete knowledge
of the hypergraph $G$ is available throughout the procedure.

The general form of such an algorithm is set out in
Table~\ref{table:generalg}.
\begin{table}
\begin{center}
\begin{tabbing}
\quad\quad\=\\
\>{\sc Input}\\
\>\qquad\= $r$-graph $G$ on vertex set~$[n]$\\
\>\>{\em in prune mode} \=a subset $I\subset [n]$\\
\>\>{\em in build mode} \>a subset $T\subset[n]$\\
\>{\sc Output}\\
\>\>{\em in prune mode} \> a subset $T\subset [n]$\\
\>\>{\em in build mode} \>a subset $C\subset [n]$\\
\>{\sc Initialisation} \\
\>\>initialise data structure $\mathcal{D}$\\
\>\>{\em in prune mode} \= put $T=\emptyset$\\
\>\>{\em in build mode} \>put $C=[n]$\\
\\
\>for $v=1,2,\ldots,n$ do:\\
\>\quad\=let Rule$(v)$ be some condition on $v$ depending on $\mathcal{D}$\\
\>\>if \=Rule$(v)$ is satisfied\\
\>\>\>{\em in prune mode} \= if $v\in I$, add $v$ to $T$ \\
\>\>\>{\em in build mode} \>if $v\notin T$, remove $v$ from $C$\\
\>\>\>if \=$v\in T$\\
\>\>\>\>update $\mathcal{D}$ using $v$
\\
\end{tabbing}
\end{center}
\caption{\label{table:generalg}The general form of the online algorithm}
\end{table}
Note that $\mathcal{D}$ is updated with $v$ only if the rule is passed and
$v\in T$; in the case of prune mode, this is after $v$ has been added
to $T$.

\subsection{Comments on the general form}

Before describing the particular membership rule which will be used in the
actual container algorithm, we make one or two observations that apply
generally.

\begin{rem}{\em The online property.}\label{rem:online}
  It is immediate that an algorithm of the kind described will produce
  containers with the online property: that is, given any initial ordering
  of the vertices, then for all $w\in[n]$, $C(T)\cap[w]=C(T\cap[w])\cap[w]$
  holds. This is simply because the algorithm has already determined
  $C(T)\cap[w]$ by the time it has inspected $v=1,2,\ldots,w$, and for this
  range of $v$ the decisions made depend only on which elements of $[w]$
  are contained in the input, that is, on $T\cap[w]$.
\end{rem}
\begin{rem}{\em Over-specifying the input to build mode.}\label{rem:enlarge}
  It will be helpful, in order to make the presentation cleaner, to observe
  that if $I$ is a set for which prune mode outputs~$T$, then $C(S)=C(T)$
  for any set $S$ such that $T\subset S\subset I$. This is because $T$ is
  the subset of $I$ for which the rule is satisfied: hence for any $v\in
  S\setminus T$ the rule is not satisfied, and both forms of the algorithm
  pass over $v$ without further action. (In particular, $\mathcal{D}$ is
  not updated when $v\in S\setminus T$.) This observation was made by
  Balogh, Morris and Samotij~\cite{BMS} in their algorithm.
  
  The advantage of over-specifying comes when the container theorem is
  iterated, as mentioned regarding the proof of
  Corollary~\ref{cor:sparse_container}. Each of the containers
  $C_1,C_2,C_3,\ldots$ is determined by some subset $T_1,T_2,T_3,\ldots$
  of~$I$. We could instead give as input to build mode the set $T=T_1\cup
  T_2\cup T_3\cup\cdots$ at every iteration of the algorithm, and the sets
  $C_1,C_2,C_3,\ldots$ would be output correctly. Hence the final
  container~$C$ is determined by~$T$.
\end{rem}
\begin{rem}{\em Changing the vertex order.}\label{rem:changeorder}
  Rather than process the vertices one by one in the order supplied, that
  is, rather than test the rule in the order $v=1,2,\ldots,n$, the
  algorithm could be changed so that it decides for itself in which order
  to process the vertices. For example, having processed vertices
  $v=v_1,v_2,\ldots,v_{k-1}$, it could inspect each of the remaining
  vertices and choose as $v_k$ the one which maximises some potential
  function that depends on~$\mathcal{D}$. A feature like this is used
  in~\cite{BMS}. Note that the algorithm here will
  still work under this dynamic re-ordering, which is to say the fact that
  $I\subset C(T)$ is preserved.
  However it will break the online property, because
  $C\cap[w]$ no longer depends just on $T\cap[w]$. For this reason, and
  because the rest of our analysis appears to gain no advantage from it, we
  do not use this feature.
\end{rem}

\subsection{The actual container algorithm}\label{subsec:actual}

The main features of the algorithm are described in~\cite[Section~4]{ST2},
so we do not dwell on them here. We make use of auxiliary multisets of
edges $P_{r},P_{r-1},\ldots,P_1$. Here $P_s$ is a multiset of $s$-sets in
$[n]$; in~\cite{ST2} $P_s$ was an $s$-uniform multi-hypergraph but here it
is more convenient to use the same symbol for the edge set of such a
hypergraph, the difference being purely one of notation. We take
$P_r=E(G)$, but for $s<r$ the multisets $P_s$ will grow during the course
of the algorithm, being initialised to the empty set. Any set added to
$P_s$ comes from a set in $P_{s+1}$ after removing its first (in the
ordering of~$[n])$ vertex~$v$; if this happens, it does so when it is $v$'s
turn to be processed by the algorithm, and then only if $v\in T$. It
follows that each set $f\in P_s$ comes from some set $t\subset T$,
$|t|=r-s$, such that $t\cup f \in E(G)$, and $f$ comprises the last $s$
vertices of the edge $t\cup f$. For each set $\sigma\in[n]^{(\le s)}$, we
denote by $d_s(\sigma)$ the degree of $\sigma$ in $P_s$, that is,
$d_s(\sigma)$ is the number of edges in $P_s$ that contain~$\sigma$. Thus
the value of $d_s(\sigma)$ is initially zero, and it grows during the run
of the algorithm as $P_s$ grows.  Finally, $\Gamma_s$ is the set of
elements $\sigma\in[n]^{(\le s)}$ whose degree $d_s(\sigma)$ has reached
some predetermined threshold. The data structure $\mathcal{D}$, on which
the membership rule is based, comprises $P_{r},P_{r-1},\ldots,P_1$ together
with $\Gamma_{r-1},\Gamma_{r-2},\ldots,\Gamma_1$.

The algorithm of~\cite{ST2} involves two parameters, $\tau$ and $\zeta$,
and these are used here in the same way; roughly speaking, $\tau$ is the
measure of~$T$ and $\zeta$ is a smallish constant. In particular, if $d$ is
the average degree of $G$ then we denote by $B$ the set of vertices of low
degree, that is, $B=\{v\in[n]:d(v)<\zeta d\}$. 

The final thing needed for the algorithm is the threshold function used to
determine entry into $\Gamma_s$. As has been said before, we make use of
two different threshold functions; we call these a {\em strong} function
for the proof of Theorem~\ref{thm:cover} and a {\em weak} function for the
proof of Theorem~\ref{thm:coverweak}. Both functions depend on the input
parameter $\tau$ and on the graph $G$ itself. But it is important to note
that their values can be computed at the start of the algorithm and they do
not change during running.

\begin{defn}\label{defn:thetas}
For $s=2,\ldots,r$ and $\sigma \in [n]^{(\le s)}$, the {\em strong}
threshold functions $\theta_s$ are given as follows.
\begin{align*}
\theta_s(\sigma)&=\tau^{r-s}d(v)&\mbox{for $\sigma=\{v\}$, i.e.\ $|\sigma|=1$}\\
\theta_s(\sigma)&= 2^{r\choose2}\tau^{r-s}\sum_{\ell=0}^{r-s} 
2^{-{s+\ell\choose2}}\tau^{-\ell}d^{(|\sigma|+\ell)}(\sigma)&\mbox{for $|\sigma|\ge 2$}\\
\end{align*}
\end{defn}

The definition of the weak threshold function makes use of a real
number~$\delta$. This is not the co-degree function $\delta(G,\tau)$, which
plays no part in Theorem~\ref{thm:coverweak}, but it is a kind of weak echo
of this function, and so we use the same letter.

\begin{defn}\label{defn:thetasw}
  For $s=2,\ldots,r$ and $\sigma \in [n]^{(\le s)}$, the {\em weak}
  threshold functions $\theta_s$ are given as follows, where $\delta$ is
  the minimum real number such that $d(\sigma)\le \delta
  d\tau^{|\sigma|-1}$ holds for all $\sigma$, $|\sigma|\ge2$.
\begin{align*}
\theta_s(\sigma)&=\tau^{r-s}d(v)&\mbox{for $\sigma=\{v\}$, i.e.\ $|\sigma|=1$}\\
\theta_s(\sigma)&= \delta d\tau^{r-s+|\sigma|-1}&\mbox{for $|\sigma|\ge 2$}\\
\end{align*}
\end{defn}

The way the threshold functions are used in the algorithm means that the
degrees $d_s(v)$ are bounded, as shown in Lemma~\ref{lem:maxdeg} for the
strong functions and in Lemma~\ref{lem:maxdegw} for the weak
versions. These lemmas form the foundation of the proofs of the container
theorems. The strong versions have an extra consequence for
$d_{s-1}(\sigma)$, stated in Lemma~\ref{lem:sigmadeg}, which is
needed to establish the bound on $\mu(C)$ in Theorem~\ref{thm:cover}~(d).

The container algorithm is set out in Table~\ref{table:actualg}.
\begin{table}
\begin{center}
\begin{tabbing}
\quad\quad\=\\
\>{\sc Input}\\
\> \qquad\=an $r$-graph $G$ on vertex set~$[n]$, with average
degree $d$\\
\>\>parameters $\tau, \zeta>0$\\
\>\>{\em in prune mode} \=a subset $I\subset [n]$\\
\>\>{\em in build mode} \>a subset $T\subset[n]$\\
\>{\sc Output}\\
\>\>{\em in prune mode} \> a subset $T\subset [n]$\\
\>\>{\em in build mode} \>a subset $C\subset [n]$\\
\>{\sc Initialisation}\\
\>\> put $B=\{v\in[n]: d(v) < \zeta d\}$\\
\>\>evaluate the thresholds $\theta_s(\sigma)$, $\sigma\in[n]^{(\le s)}$,
$1\le i\le r$\\
\>\>put $P_r=E(G)$, $P_s=\emptyset$, $\Gamma_s=\emptyset$,
$s=1,2,\ldots,r-1$\\
\>\>{\em in prune mode} \= put $T=\emptyset$\\
\>\>{\em in build mode} \>put $C=[n]$\\
\\
\>for $v=1,2,\ldots,n$ do:\\
\>\quad\=for $s=1,2,\ldots,r-1$ do:\\
\>\>\quad\=let $F_{v,s} = \{f \in [v+1,n]^{(s)} : \{v\}\cup f\in P_{s+1},\,
\mbox{ and }\not\exists\, \sigma\in \Gamma_s\,, \sigma\subset f\,\}$\\
\>\>\quad[{\em here $F_{v,s}$ is a multiset with multiplicities inherited from 
$P_{s+1}$}]\\
\>\>if \=$v\notin B$, and either $|F_{v,s}|\ge\zeta
\tau^{r-s-1}d(v)$ for some $s$ or $v\in\Gamma_1$\\
\>\>\>{\em in prune mode} \= if $v\in I$, add $v$ to $T$ \\
\>\>\>{\em in build mode} \>if $v\notin T$, remove $v$ from $C$\\
\>\>\>if \=$v\in T$ then for $s=1,2,\ldots,r-1$ do:\\
\>\>\>\>add $F_{v,s}$ to $P_s$\\
\>\>\>\>for each $\sigma\in[v+1,n]^{(\le s)}$, if $d_s(\sigma)\ge
\theta_s(\sigma)$, add $\sigma$ to $\Gamma_s$\\
\end{tabbing}
\end{center}
\caption{\label{table:actualg}The container algorithm}
\end{table}
The membership rule test is the line that begins ``if $v\notin B$
\dots''. The two lines before that are merely to define the multisets
$F_{v,1},\ldots,F_{v,r-1}$ that are used in the test.

As in~\cite{ST2}, the aim is to build up the multisets $P_s$ as quickly as
possible, whilst keeping the degrees in $P_s$ of each set $\sigma$ below
its target value $\theta_s(\sigma)$. The set $\Gamma_s$ comprises those
$\sigma$ that have reached their target degree in~$P_s$. Hence the multiset
$F_{v,s}$ is the potential contribution of $v$ to $P_s$; it is the edges of
$P_{s+1}$ that contain~$v$ (with $v$ then removed), but which don't contain
anything from $\Gamma_s$. If $F_{v,s}$ is large for some $s$ then $v$ makes
a substantial contribution to that $P_s$, and we place $v$ in~$T$, updating
all $P_s$ and $\Gamma_s$ accordingly.

If every vertex that enters $T$ does so because one of the sets $F_{v,s}$ is
substantial, then $T$ will be small, because the size of each $P_s$ is
bounded (this is why we cap its degrees) and so it cannot be increased
often. Observe, though, that there is another reason for placing $v$
in~$T$, other than that one of the $F_{v,s}$ is large, namely, that
$v\in\Gamma_1$. This will never happen if~$I$ is an independent set, since
it means that $\{v\}$ is an edge of $P_1$, which, as mentioned earlier,
means $t\cup\{v\}$ is an edge of~$I$ for some $t\in T^{(r-1)}\subset
I^{(r-1)}$. If, however, $I$ is not independent then some vertices might
enter $T$ for this reason, but provided $I$ is sparse this will be a rare
occurrence and $T$ will still be small, as required.

\subsection{Differences from previous algorithm}\label{subsec:diffs}

The container algorithm used in~\cite{ST2} was similar to the one here,
except that the sets $P_{r-1},\ldots,P_1$ were built consecutively by $r-1$
passes of the algorithm, rather than in parallel during one pass as
here. In other words, $P_s$ was constructed after $P_{s+1}$ had been fully
built. The construction of $P_s$ produced a set $T_s$ (in prune mode) and
so the whole algorithm produced a tuple $(T_{r-1},\ldots,T_1,T_0)$ rather
than a single set~$T$. However this is not the essential difference
between the algorithms, since for the reason given in
Remark~\ref{rem:enlarge} the tuple could have been replaced by the set
$T_{r-1}\cup\cdots\cup T_1\cup T_0$. (The set $T_0$ was defined to be
$I\cap \Gamma_1$, which is incorporated into the set $T$ in the present
algorithm by means of the membership rule.)

The main difference between this and the previous algorithm is the
condition for entry into $\Gamma_s$. In~\cite{ST2} the condition depended
on knowledge of the whole of $P_s$. Here, it depends on $\theta_s(\sigma)$,
which is available from the start of the algorithm. It is this that allows
the sets $P_s$ to be computed in parallel. These sets will not be exactly
the same as those in~\cite{ST2} due to the difference in detail, but they
will be similar.

\section{Analysis of the algorithm}\label{sec:analysis}

We now analyse the behaviour of the algorithm when using the strong
threshold functions and when using the weak threshold functions, and so
establish the bounds on $\mu(T)$ and $\mu(C)$ claimed in
Theorems~\ref{thm:cover} and~\ref{thm:coverweak}. In both cases, we need to
obtain bounds on the degrees $d_s(u)$ of vertices. In the case of the
strong form, which we begin with, the bounds obtained are the same as those
in~\cite{ST2}. Fortunately this means we can then make direct use of the
arguments used in~\cite{ST2} to bound $\mu(T)$ and $\mu(C)$, without
repeating the details. In the case of the weak form, we derive
corresponding bounds on the degrees $d_s(u)$, and then give proofs of
bounds on $\mu(T)$ and $\mu(C)$ which follow from these. Note that, in the
statements of the lemmas, $d_s(u)$ and $d_s(\sigma)$ refers to the final
degrees in $P_s$, after the algorithm has completed.

The lemmas in this section all make claims about the output of the
algorithm, given certain inputs. It is worth emphasising that the values of
$\tau$ and $\zeta$ that appear in the lemmas are {\em the values of the
  parameters which are input to the algorithm}. In particular, we
re-iterate the remark made just prior to Definition~\ref{defn:thetas}, that
the threshold functions are evaluated during the initialisation phase of
the algorithm, and the definitions of these functions is in terms of the
parameter $\tau$ that is input to the algorithm. Consequently, the value of
$\delta(G,\tau)$, mentioned in Lemma~\ref{lem:maxdeg} where the strong
threshold functions are being used, and the value of~$\delta$, appearing in
the lemmas of~\S\ref{subsec:weak} where the weak threshold functions are in
use, are determined by the value of~$\tau$ input to the algorithm. (The
value of $\delta(G,\tau)$ is defined prior to Theorem~\ref{thm:cover}, and
the value of $\delta$ is defined by Definition~\ref{defn:thetasw}.)

\subsection{Strong thresholds and the proof of 
Theorem~\ref{thm:cover}}\label{subsec:strongth}

The following fundamental lemma gives bounds on subset degrees in $P_s$.

\begin{lem}\label{lem:sigmadeg} Let the algorithm be run using the strong
  threshold functions. Then
for $s=2,\ldots,r$ and $2\le|\sigma|\le s$, we have
$$
d_s(\sigma)\le 
2^{r\choose2}\tau^{r-s}\sum_{\ell=0}^{r-s} 
2^{-{s+\ell\choose2}+\ell}\tau^{-\ell}d^{(|\sigma|+\ell)}(\sigma)
$$
and for $2\le|\sigma|\le s-1$ with $\sigma\in\Gamma_{s-1}$, we have
$$
d_{s-1}(\sigma)\ge 2^{s-1}\tau d_s(\sigma)\,.
$$
\end{lem}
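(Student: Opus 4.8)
The plan is to prove both inequalities by a single descending induction on $s$, starting from $s=r$ where $P_r=E(G)$ is fixed and the claim is immediate. Fix $\sigma$ with $2\le|\sigma|\le s$. The key structural observation, already recorded in~\S\ref{subsec:actual}, is that every edge $f\in P_s$ containing $\sigma$ arises from some vertex $v<\min f$ that entered $T$, by taking an edge $\{v\}\cup f\in P_{s+1}$ and deleting $v$; moreover $f$ was added only because $f$ avoided $\Gamma_s$ at that moment, so in particular no $\tau'\subset f$ with $|\tau'|\le s$ was already in $\Gamma_s$ — this is what keeps the degrees $d_s(\tau')$ from running away, and it is the engine of the whole argument. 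Writing $\sigma'=\{v\}\cup\sigma$, each such contributing edge $f\ni\sigma$ corresponds to an edge of $P_{s+1}$ containing $\sigma'$, so $d_s(\sigma)\le\sum_v d_{s+1}(\sigma\cup\{v\})$ where the sum is over the vertices $v$ that were processed and added to $T$ and for which $\sigma\cup\{v\}$ had not yet entered $\Gamma_{s+1}$ — but the number of such $v$ is controlled because once $d_{s+1}(\sigma)$ reaches $\theta_{s+1}(\sigma)$ the set $\sigma$ joins $\Gamma_{s+1}$ and no further edges through $\sigma$ are admitted to $P_{s+1}$. The bound to aim for is assembled from the inductive bound on $d_{s+1}(\sigma\cup\{v\})$ (of the same shape, with $s$ replaced by $s+1$), together with the threshold cap $\theta_{s+1}(\sigma)\ge d_{s+1}(\sigma)$ which bounds how many of the larger sets can contribute; the factors $2^{\binom{r}{2}}$, $2^{-\binom{s+\ell}{2}+\ell}$ and $\tau^{-\ell}$ are exactly what bookkeeping the shift $\ell\mapsto\ell+1$ and the inequality $\binom{s+\ell}{2}=\binom{s-1+\ell}{2}+(s-1+\ell)$ produce, using that $d^{(|\sigma|+\ell)}(\sigma)=\max\{d(\sigma''):\sigma\subset\sigma''\in[n]^{(|\sigma|+\ell)}\}$ absorbs the maximisation over the extra vertex $v$.

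Concretely, for the first inequality I would split the edges of $P_s$ through $\sigma$ according to which vertex $v$ they lost: the number of edges lost to $v$ is at most $d_{s+1}(\sigma\cup\{v\})$, and this is at most $\theta_{s+1}(\sigma\cup\{v\})$ if $\sigma\cup\{v\}\in\Gamma_{s+1}$, or else bounded by the induction hypothesis applied at level $s+1$. Summing over $v$ and recognising $\sum_v d^{(|\sigma\cup\{v\}|+\ell)}(\sigma\cup\{v\})$ as being dominated (up to the crude $|\sigma|\le s$-sized multiplicity of ways to extend) by $d^{(|\sigma|+1+\ell)}(\sigma)$ times a degree factor that the threshold cap converts into the right power of $\tau$, one re-derives the target expression at level $s$. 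This is essentially the degree calculation promised in the introduction: it mirrors the one in~\cite{ST2} but is now done in one pass rather than $r-1$ passes.

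For the second inequality, suppose $2\le|\sigma|\le s-1$ and $\sigma\in\Gamma_{s-1}$. Then $\sigma$ entered $\Gamma_{s-1}$ precisely when $d_{s-1}(\sigma)$ first reached $\theta_{s-1}(\sigma)$, so $d_{s-1}(\sigma)\ge\theta_{s-1}(\sigma)$. On the other hand the first inequality, already proved at level $s$, gives an upper bound on $d_s(\sigma)$ of exactly the shape $2^{\binom{r}{2}}\tau^{r-s}\sum_{\ell}2^{-\binom{s+\ell}{2}+\ell}\tau^{-\ell}d^{(|\sigma|+\ell)}(\sigma)$, while $\theta_{s-1}(\sigma)=2^{\binom{r}{2}}\tau^{r-s+1}\sum_{\ell}2^{-\binom{s-1+\ell}{2}}\tau^{-\ell}d^{(|\sigma|+\ell)}(\sigma)$ by Definition~\ref{defn:thetas}. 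Comparing the two sums term by term, the ratio of the $\ell$-th coefficients is $\tau\cdot 2^{\binom{s+\ell}{2}-\ell-\binom{s-1+\ell}{2}}=\tau\cdot 2^{(s-1+\ell)-\ell}=\tau\cdot 2^{s-1}$, which is independent of $\ell$; hence $\theta_{s-1}(\sigma)\ge 2^{s-1}\tau\, d_s(\sigma)$ and therefore $d_{s-1}(\sigma)\ge 2^{s-1}\tau\, d_s(\sigma)$, as claimed. The identity $\binom{s+\ell}{2}-\binom{s-1+\ell}{2}=s-1+\ell$ is the whole point of the somewhat mysterious $2^{-\binom{s+\ell}{2}}$ weighting in Definition~\ref{defn:thetas}: it is engineered to make this ratio constant.

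The main obstacle I anticipate is not the algebra but the combinatorial bookkeeping in the induction step for the first inequality — in particular, being careful about multiplicities (the $P_s$ are multisets), about the fact that an edge $f$ is admitted to $P_s$ only if it avoids $\Gamma_s$ \emph{at the time $v$ is processed} rather than at the end, and about correctly matching the "once $\sigma\cup\{v\}\in\Gamma_{s+1}$, no more contributions" mechanism with the recursive bound. Handling the boundary case $|\sigma|=s$ (where the $\Gamma_{s-1}$ statement is vacuous and only $d_s(\sigma)\le\theta_s(\sigma)$ plus the trivial $\ell=0$ term is in play) also needs a separate line. Once those are pinned down, everything reduces to the two binomial identities above.
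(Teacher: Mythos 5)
Your proof of the second inequality is correct and matches the paper's: given $\sigma\in\Gamma_{s-1}$ you have $d_{s-1}(\sigma)\ge\theta_{s-1}(\sigma)$, and the term-by-term comparison of $\theta_{s-1}(\sigma)$ with the first-inequality bound on $d_s(\sigma)$, via the identity $\binom{s+\ell}{2}-\binom{s-1+\ell}{2}=s-1+\ell$, gives exactly the factor $2^{s-1}\tau$. That part is fine.

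The first inequality, however, has a genuine gap. You propose to bound $d_s(\sigma)$ by summing $d_{s+1}(\sigma\cup\{v\})$ over all contributing vertices $v$, and to control the number of terms via $\Gamma_{s+1}$-membership of $\sigma\cup\{v\}$. That mechanism is misidentified: the gate on entry to $P_s$ checks whether the $s$-set $f$ (after removing $v$) meets $\Gamma_s$, not whether $\sigma\cup\{v\}$ is in $\Gamma_{s+1}$, and in any case a sum over the (potentially many) contributing $v$'s would not collapse to a single expression of the required shape. What actually bounds $d_s(\sigma)$ is the following observation, which your sketch alludes to conceptually but then drops: once $\sigma$ is placed in $\Gamma_s$, no further $s$-set containing $\sigma$ is ever admitted to $P_s$, so $d_s(\sigma)$ never changes again. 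Just before the vertex $v$ whose processing pushes $\sigma$ into $\Gamma_s$ is handled, $d_s(\sigma)<\theta_s(\sigma)$; the increment caused by $v$ is at most $d_{s+1}(\{v\}\cup\sigma)$. Hence
\[
d_s(\sigma)\ \le\ \theta_s(\sigma)+d_{s+1}(\{v\}\cup\sigma)
\]
for that \emph{one} vertex $v$, and this single-step bound (trivially true also when $\sigma\notin\Gamma_s$) is what feeds into the downward induction, together with $d^{(j)}(\{v\}\cup\sigma)\le d^{(j)}(\sigma)$ and the algebra $1+2^{\ell-1}\le 2^{\ell}$. Without isolating this ``$\theta_s(\sigma)$ plus one last jump'' structure, your induction does not close.
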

\begin{rem}\label{rem:sigmadeg}
  During the algorithm, $\sigma$ is placed into $\Gamma_s$ as soon as
  $d_s(\sigma)$ is at least $\theta_s(\sigma)$, and the degree will not
  thereafter increase. The degree can be greater than $\theta_s(\sigma)$
  but the first inequality of the lemma shows that the excess over
  $\theta_s(\sigma)$ is not large. The second inequality, on the other
  hand, is one which was built into the original algorithm. The lemma shows
  that the inequality is valid in the present setup too, enabling us to
  copy over earlier proofs without change. The definition of
  $\theta_s(\sigma)$ was made with this in mind.
\end{rem}
\begin{proof}
  We prove the first inequality for $s=r,r-1,\ldots,2$ in order. For $s=r$
  the inequality asserts that $d_r(\sigma)\le d^{(|\sigma|)}(\sigma)$,
  which is true. Suppose then that the corresponding inequality for
  $d_{s+1}()$ holds.  If $\sigma\in \Gamma_s$ then $\sigma$ entered
  $\Gamma_s$ after some vertex $v$ was inspected and the set $F_{v,s}$ was
  added to $P_s$. Before this addition, $d_s(\sigma)\le \theta_s(\sigma)$
  was true. The increase in $d_s(\sigma)$ resulting from the addition is
  the number of $s$-sets in $F_{v,s}$ that contain $\sigma$. By definition of
  $F_{v,s}$, these come from edges of $P_{s+1}$ that contain both $v$
  and~$\sigma$; the number of these is at most $d_{s+1}(\{v\}\cup\sigma)$.
  The value of $d_s(\sigma)$ remains unchanged after the addition, and so
  at the end we have
  $d_s(\sigma)\le\theta_s(\sigma)+d_{s+1}(\{v\}\cup\sigma)$ (for some $v$
  depending on $\sigma$). This inequality trivially holds if
  $\sigma\notin\Gamma_s$, and so it holds for all $\sigma\in[n]^{(\le
    s)}$. The induction hypothesis supplies an upper bound for $d_{s+1}()$,
  and so, bearing in mind that $d^{(j)}(\{v\}\cup\sigma)\le
  d^{(j)}(\sigma)$ for all $j$ by definition, we obtain
  \begin{align*}
    d_s(\sigma)&\le\theta_s(\sigma)+d_{s+1}(\{v\}\cup\sigma)\\
    &\le \theta_s(\sigma)+2^{r\choose2}\tau^{r-s-1}\sum_{\ell=0}^{r-s-1}
    2^{-{s+1+\ell\choose2}+\ell}\tau^{-\ell}d^{(|\sigma|+1+\ell)}(\{v\}\cup\sigma)\\
    &\le \theta_s(\sigma)+2^{r\choose2}\tau^{r-s-1}\sum_{\ell=0}^{r-s-1}
    2^{-{s+1+\ell\choose2}+\ell}\tau^{-\ell}d^{(|\sigma|+1+\ell)}(\sigma)\displaybreak[0]\\
    &=\theta_s(\sigma)+2^{r\choose2}\tau^{r-s}\sum_{\ell=1}^{r-s}
    2^{-{s+\ell\choose2}+\ell-1}\tau^{-\ell}d^{(|\sigma|+\ell)}(\sigma)\\
    &=2^{r\choose2}\tau^{r-s}\left(2^{-{s\choose2}}d(\sigma)+ \sum_{\ell=1}^{r-s}
    2^{-{s+\ell\choose2}}\left[1+2^{\ell-1}\right]
    \tau^{-\ell}d^{(|\sigma|+\ell)}(\sigma)\right)\\
    &\le2^{r\choose2}\tau^{r-s}\sum_{\ell=0}^{r-s}
    2^{-{s+\ell\choose2}+\ell}\tau^{-\ell}d^{(|\sigma|+\ell)}(\sigma)\,,
  \end{align*}
  since $1+2^{\ell-1}\le 2^\ell$ for $\ell\ge1$. This finishes the
  verification of the first inequality of the lemma.

  To prove the second inequality, note that if $\sigma\in\Gamma_{s-1}$ then
  $d_{s-1}(\sigma)\ge\theta_{s-1}(\sigma)$, and so, using the first
  inequality, it is enough to show that
  $$
  \theta_{s-1}(\sigma)\ge2^{s-1}\tau2^{r\choose2}\tau^{r-s}\sum_{\ell=0}^{r-s}
  2^{-{s+\ell\choose2}+\ell}\tau^{-\ell}d^{(|\sigma|+\ell)}(\sigma)
  $$
  holds, i.e., that
  $$
  \sum_{\ell=0}^{r-s+1}
  2^{-{s-1+\ell\choose2}}\tau^{-\ell}d^{(|\sigma|+\ell)}(\sigma)
  \ge2^{s-1}\sum_{\ell=0}^{r-s}
  2^{-{s+\ell\choose2}+\ell}\tau^{-\ell}d^{(|\sigma|+\ell)}(\sigma)
  $$
  holds. For this, it suffices that $-{s-1+\ell\choose2}\ge
  s-1-{s+\ell\choose2}+\ell$ for $0\le\ell\le r-s$. But this inequality
  holds identically, and the proof of the lemma is complete.
\end{proof}

Using Lemma~\ref{lem:sigmadeg} we can immediately prove the next lemma,
which is identical to~\cite[Lemma~5.2]{ST2} (apart from trivial changes of
wording due to the different algorithm here).

\begin{lem}\label{lem:maxdeg}
  Let $G$ be an $r$-graph on vertex set $[n]$ with average degree~$d$. Let
  $P_r=E(G)$ and let $P_{r-1},\ldots,P_1$ be the multisets constructed
  during the algorithm using the strong threshold functions, either in
  build mode or in prune mode. Then
$$
\sum_{u\in U}d_s(u)\,\le\,(\mu(U)+4^{1-s}\delta(G,\tau))\, \tau^{r-s}\,nd
$$
holds for all subsets $U\subset[n]$ and for $1\le s\le r$.
\end{lem}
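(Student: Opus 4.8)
The plan is to deduce Lemma~\ref{lem:maxdeg} from Lemma~\ref{lem:sigmadeg} by summing the first displayed inequality of Lemma~\ref{lem:sigmadeg} over singletons $\sigma=\{u\}$, $u\in U$. Applying that inequality with $|\sigma|=1$ (it is stated for $2\le|\sigma|\le s$, but the singleton case $d_s(\{u\})\le \theta_s(\{u\}) + d_{s+1}(\{v\}\cup\{u\}) \le \dots$ goes through in exactly the same way — in fact for $|\sigma|=1$ the bound $d_s(u)\le 2^{\binom r2}\tau^{r-s}\sum_{\ell=0}^{r-s} 2^{-\binom{s+\ell}{2}+\ell}\tau^{-\ell}d^{(1+\ell)}(u)$ is what the induction produces), we obtain, after summing over $u\in U$,
\begin{equation*}
\sum_{u\in U}d_s(u)\;\le\;2^{\binom r2}\tau^{r-s}\sum_{\ell=0}^{r-s}2^{-\binom{s+\ell}{2}+\ell}\tau^{-\ell}\sum_{u\in U}d^{(1+\ell)}(u)\,.
\end{equation*}
The $\ell=0$ term needs separate treatment: $\sum_{u\in U}d^{(1)}(u)=\sum_{u\in U}d(u)=\mu(U)\,nd$ by the definition of degree measure, and its coefficient $2^{\binom r2 - \binom s2}\tau^{r-s}$ should be shown to be at most $\tau^{r-s}nd\cdot\mu(U)$ up to the stated slack — here one uses that $s\ge1$ forces $\binom r2-\binom s2\le\binom r2$, but actually one wants this term to contribute exactly the $\mu(U)\tau^{r-s}nd$ part, so I would want $2^{\binom r2-\binom s2}\le 1$, which fails; instead the clean route is to note the algorithm never lets $d_s(u)$ exceed $\theta_s(u)=\tau^{r-s}d(u)$ by more than one increment bounded via $d_{s+1}$, and for the $\ell=0$ contribution use directly that a vertex $u$ enters $\Gamma_1$-type bookkeeping only after $d_s(u)$ reaches $\theta_s(u)=\tau^{r-s}d(u)$, giving $\sum_u d_s(u)\le \tau^{r-s}\sum_u d(u)+(\text{higher-}\ell\text{ terms})=\mu(U)\tau^{r-s}nd+\cdots$.

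For the remaining terms $\ell\ge1$, the plan is to recognise $\sum_{u\in U}d^{(1+\ell)}(u)\le\sum_v d^{(1+\ell)}(v)$ (extending the sum to all of $[n]$, since each summand is nonnegative), and then to match this against the definition of $\delta(G,\tau)$: with $j=\ell+1$ running over $2,\dots,r$, we have $\sum_v d^{(j)}(v)=\delta_j\,\tau^{j-1}nd$, so $\tau^{-\ell}\sum_v d^{(1+\ell)}(v)=\delta_{\ell+1}\,\tau^{-\ell}\tau^{\ell}nd=\delta_{\ell+1}\,nd$. Hence the $\ell\ge1$ part is bounded by
\begin{equation*}
2^{\binom r2}\tau^{r-s}\,nd\sum_{\ell=1}^{r-s}2^{-\binom{s+\ell}{2}+\ell}\delta_{\ell+1}\;=\;\tau^{r-s}nd\sum_{j=2}^{r-s+1}2^{\binom r2-\binom{s+j-1}{2}+j-1}\delta_j\,.
\end{equation*}
It then remains to check that the coefficient $\sum_{j}2^{\binom r2-\binom{s+j-1}{2}+j-1}\delta_j$ is at most $4^{1-s}\delta(G,\tau)=4^{1-s}2^{\binom r2-1}\sum_{j=2}^r 2^{-\binom{j-1}{2}}\delta_j$; comparing term by term, this amounts to the exponent inequality $\binom r2-\binom{s+j-1}{2}+j-1\le 2(1-s)+\binom r2-1-\binom{j-1}{2}$, i.e. $-\binom{s+j-1}{2}+j-1+2s-1+\binom{j-1}{2}\le 0$, which one verifies holds identically for $s\ge1$, $j\ge2$ (this is the same kind of binomial-coefficient bookkeeping as at the end of the proof of Lemma~\ref{lem:sigmadeg}, and indeed the definition of $\theta_s$ and of $\delta(G,\tau)$ were arranged precisely so that it works).

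The main obstacle I anticipate is not conceptual but bookkeeping: getting the $\ell=0$ (equivalently $j=1$) term to produce exactly $\mu(U)\tau^{r-s}nd$ while every $\ell\ge1$ term is absorbed into the $4^{1-s}\delta(G,\tau)\tau^{r-s}nd$ slack, keeping careful track of which inequality ($d_s(u)\le\theta_s(u)+d_{s+1}(\{v\}\cup u)$ versus the closed-form bound from Lemma~\ref{lem:sigmadeg}) is used for which term, and checking the exponent inequality cleanly. Since this is exactly the computation \cite[Lemma~5.2]{ST2}, and Lemma~\ref{lem:sigmadeg} has been shown to hold verbatim in the present setup, the argument copies over; I would write it out just to the extent of exhibiting the two exponent inequalities and the identification of $\sum_v d^{(j)}(v)$ with $\delta_j\tau^{j-1}nd$, and then invoke \cite{ST2} for the routine remainder.
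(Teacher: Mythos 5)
Your structural plan is the right one, and you do arrive at the same skeleton as the paper: use the recursion $d_s(u)\le\theta_s(u)+d_{s+1}(\{v,u\})$, sum over $u\in U$, identify the $\theta_s(u)=\tau^{r-s}d(u)$ part with $\mu(U)\tau^{r-s}nd$, and absorb the remainder into $4^{1-s}\delta(G,\tau)\tau^{r-s}nd$ using Lemma~\ref{lem:sigmadeg} and the definition of $\delta(G,\tau)$. You also correctly spot that a naive ``singleton extension'' of the first inequality of Lemma~\ref{lem:sigmadeg} cannot be used for the $\ell=0$ term, since its coefficient $2^{\binom r2-\binom s2}$ overshoots the required $1$.

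However, there is a genuine error in your treatment of the $\ell\ge1$ terms, and the exponent inequality you claim ``holds identically'' is in fact false. You keep the coefficients $2^{\binom r2-\binom{s+\ell}{2}+\ell}$ from the hypothetical singleton closed form, but these overshoot by a factor of $2$: applying the recursion and then Lemma~\ref{lem:sigmadeg} to $\sigma=\{v,u\}$ (which is legitimate, since $|\{v,u\}|=2$) gives the $d^{(2+\ell)}(u)$ terms with coefficient $2^{\binom r2}\tau^{r-s-1}\,2^{-\binom{s+1+\ell}{2}+\ell}\tau^{-\ell}$, i.e.\ exponent $\binom r2-\binom{s+j-2}{2}-s$ after reindexing $j=2+\ell$, whereas your exponent $\binom r2-\binom{s+j-1}{2}+j-1$ equals this plus one. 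Your final check then requires
\[
-\binom{s+j-1}{2}+j-1+2s-1+\binom{j-1}{2}\;\le\;0,
\]
which fails already at the smallest case $(s,j)=(1,2)$: the left-hand side is $-1+1+1+0=1>0$ (and it also fails at $(s,j)=(2,2)$). With the correct, smaller coefficients the inequality one needs is instead
\[
\binom{s-1}{2}+(s-1)j\;\ge\;2(s-1),
\]
which does hold for all $s\ge1$, $j\ge2$ (with equality when $s=1$, and when $s=2$, $j=2$). So the fix you applied to the $\ell=0$ term --- replacing the closed-form coefficient by the exact $\theta_s(u)$ --- must be carried through consistently: all of the $\ell\ge1$ terms should come from Lemma~\ref{lem:sigmadeg} applied to $\{v,u\}$, not from the singleton closed form, and only then does the $4^{1-s}$ slack close. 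As written, the argument does not go through.
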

\begin{proof}
  Let $u\in U$. Just as in the proof of Lemma~\ref{lem:sigmadeg}, we have
  $d_s(u)\le \theta_s(u)+d_{s+1}(\{v,u\})$ for some~$v$. By the definition of
  $\theta_s(u)$, and by Lemma~\ref{lem:sigmadeg} applied to
  $\sigma=\{v,u\}$, we have
$$
d_s(u) \le \tau^{r-s}d(u) + 
2^{r\choose2}\tau^{r-s-1}\sum_{\ell=0}^{r-s-1} 
2^{-{s+1+\ell\choose2}+\ell}\tau^{-\ell}d^{(2+\ell)}(\{v,u\})\,.
$$
By definition we have $d^{(2+\ell)}(\{v,u\})\le d^{(2+\ell)}(u)$, and so
\begin{align*}
\sum_{u\in U}d_s(u)\,&\le\, \sum_{u\in
  U}\tau^{r-s}d(u)+ 
2^{r\choose2}\tau^{r-s-1}\sum_{\ell=0}^{r-s-1} 
2^{-{s+1+\ell\choose2}+\ell}\tau^{-\ell}\sum_{u\in U}  d^{(2+\ell)}(u)\\
&=\,\tau^{r-s}\mu(U)nd+2^{{r\choose2}-1}\tau^{r-s}\sum_{j=2}^{r-s+1} 
2^{-{s+j-2\choose2}-s+1}\tau^{1-j}\sum_{u\in U}  d^{(j)}(u)\,.
\end{align*}
Now $\tau^{1-j}\sum_{u\in U}  d^{(j)}(u)\le \tau^{1-j}\sum_{u\in [n]}d^{(j)}(u)
=\delta_jnd$. Therefore
\begin{align*}
\sum_{u\in U}d_s(u)\,&\le\,
\tau^{r-s}\mu(U)nd+\tau^{r-s}nd\,2^{{r\choose2}-1}\sum_{j=2}^{r-s+1}
2^{-{s+j-2\choose2}-s+1}  \delta_j\\
&=\,
\tau^{r-s}\mu(U)nd+\tau^{r-s}nd\,2^{{r\choose2}-1}\sum_{j=2}^{r-s+1}
2^{-{s-1\choose2}-(s-1)j-{j-1\choose2}}  \delta_j\\
&\le\,
\tau^{r-s}\mu(U)nd+4^{1-s}\tau^{r-s}nd\,\delta(G,\tau)
\end{align*}
because ${s-1\choose2}+(s-1)j\ge 2(s-1)$, and this establishes the lemma.
\end{proof}

We can now move quickly to complete the proof of
Theorem~\ref{thm:cover}. Consider a run of the algorithm in prune mode, for
some set~$I$. For $1\le s\le r-1$, let $T_s$ be the set of vertices~$v\in I$
that satisfy $v\notin B$ and $|F_{v,s}|\ge \zeta \tau^{r-s-1}d(v)$, and let
$T_0$ comprise those $v\in I$ that satisfy $v\notin B$ and $v\in \Gamma_1$. Then
$T=T_{r-1}\cup \cdots\cup T_1\cup T_0$. The sets $T_{r-1},\ldots, T_1,
T_0$, which need not be disjoint, are almost identical to the ones so named
in~\cite{ST2}; the properties of them that we need are identical, and they
hold for identical reasons. So \cite[Lemma~5.3]{ST2} shows how the
inequality $\mu(T_s) < 2\tau/\zeta$ for $s\ge1$ is easily obtained; it is
because each $v\in T_s$ contributes at least $\zeta \tau^{r-s-1}d(v)$ to
$|P_s|$, whereas $|P_s|=(1/s)\sum_{u\in[n]}d_s(u)$, which is bounded by
Lemma~\ref{lem:sigmadeg}. (When applying the lemma, note that the
conditions of Theorem~\ref{thm:cover} imply $\delta(G,\tau)<\zeta$, and we
may assume that $\zeta<1$, indeed that $\zeta<1/4r!$, else the theorem is
trivial.) Almost as direct is \cite[Lemma~5.4]{ST2}, which shows
$\mu(T_0)<2\tau/ \zeta$ under either of the sparsity constraints on $G[I]$
stated at the end of Theorem~\ref{thm:cover} (recall that if $I$ is
independent then $T_0=\emptyset$). The properties used are that if $v\in
T_0$ then $d_1(v)\ge \tau^{r-1}d(v)$, because $v\in\Gamma_1$, and for each
$\{v\}\in P_1$ there is some $(r-1)$-set $t\subset T$ with $t\cup\{v\}\in
E(G[I])$. As already noted, the same properties hold here. Thus we have
$\mu(T_s)\le 2\tau/\zeta$ for all~$s$, and so $\mu(T)<2r\tau/\zeta$. This
in turn implies $|T|<2r\tau n/\zeta^2$, because $d(v)\ge\zeta d$ for every
$v\in T$ as $v\notin B$. Consequently we obtain properties (b) and~(c) of
the theorem.

To obtain property~(d), we refer to \cite[Lemma~5.5]{ST2}, which bounds the
degree measure of the containers produced by the algorithm there. Exactly
the same bound holds here, as we now explain. The argument there is a
little intricate because of the desire to obtain a good bound on $\mu(C)$,
but the whole of it is valid here. We need only point out some slight
differences of notation. The sets $T_{r-1},\ldots, T_1, T_0$ have already
been mentioned. The container built from~$T$, which here we have denoted
$C(T)$, is there denoted $C(G,T,\tau,\zeta)$. In~\cite{ST2}, the set $C_s$
is defined, for $1\le s\le r-1$, to comprise $B$ together with those
vertices $v\notin B$ for which $|F_{v,s}|<\zeta\tau^{r-s-1}d(v)$ when the
algorithm is run in build mode. We can define $C_s$ in the same way
here. Likewise $C_0$ is there defined to be $[n]-(\Gamma_1\setminus B)$,
and we do so here. Bearing in mind that $T=T_{r-1}\cup \cdots\cup T_1\cup
T_0$, we see that the container produced by the present algorithm is
$C(T)=(C_{r-1}\cap\cdots\cap C_1\cap C_0)\cup T_{r-1}\cup \cdots\cup
T_1\cup T_0$. The container $C(G,T,\tau,\zeta)$ in~\cite{ST2} is defined in
precisely this way. The proof of \cite[Lemma~5.5]{ST2}, which gives a bound
on $\mu(C(G,T,\tau,\zeta))$, uses properties of $C_{r-1},\ldots,C_1,C_0$
and of $\Gamma_{r-1},\ldots,\Gamma_1$, together with Lemma~\ref{lem:maxdeg}
(it makes no use of $T_{r-1},\ldots, T_1, T_0$). The properties of
$C_{r-1},\ldots,C_1,C_0$ are precisely those just stated, so they hold too
in the present context, as does Lemma~\ref{lem:maxdeg}. Finally, regarding
$\Gamma_{r-1},\ldots,\Gamma_1$, the properties used are that $d_s(v)\ge
\tau^{r-s}d(v)$ if $v\in \Gamma_s$, which holds here by definition of
$\theta_s(\{v\})$, and that $d_{s-1}(\sigma)\ge 2^{s-1}\tau d_s(\sigma)$ if
$|\sigma|\ge2$ and $\sigma\in \Gamma_{s-1}$, which holds here by
Lemma~\ref{lem:sigmadeg}. We conclude that the proof of
\cite[Lemma~5.5]{ST2} carries over {\em verbatim} to give a bound on
$\mu(C(T))$.

The main theorem of~\cite{ST2} is really just a summary of the bounds
$\mu(C(G,T,\tau,\zeta))$ and on $\mu(T_{r-1}),\ldots,\mu(T_0)$, as pointed
out at the end of~\cite[\S5]{ST2}, and Theorem~\ref{thm:cover} follows in
exactly the same way.

\subsection{Weak thresholds and the proof of 
Theorem~\ref{thm:coverweak}}\label{subsec:weak}

We begin with analogues of Lemmas~\ref{lem:sigmadeg} and~\ref{lem:maxdeg}.

\begin{lem}\label{lem:sigmadegw} Let the algorithm be run using the weak
  threshold functions. Then, for $1\le s\le r$, we have
\begin{align*}
d_s(u)&\le \tau^{r-s}(d(u)+r\delta d) &\mbox{for all $u\in[n]$, and}\\
d_s(\sigma)&\le r\delta d \tau^{r-s+|\sigma|-1}&\mbox{for all
  $\sigma\subset[n]$, $2\le|\sigma|\le r$}.
\end{align*}
\end{lem}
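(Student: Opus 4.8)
The plan is to mirror the proof of Lemma~\ref{lem:sigmadeg}, running a downward induction on $s$ from $s=r$ to $s=1$. The structural input is exactly the inequality derived in that proof (and used again in the proof of Lemma~\ref{lem:maxdeg}), which does not depend on which threshold functions are in force: for every $\sigma\in[n]^{(\le s)}$ there is a vertex $v\notin\sigma$ with $d_s(\sigma)\le\theta_s(\sigma)+d_{s+1}(\{v\}\cup\sigma)$, the point being that before $\sigma$ enters $\Gamma_s$ one has $d_s(\sigma)\le\theta_s(\sigma)$, the step putting it there raises $d_s(\sigma)$ by at most $d_{s+1}(\{v\}\cup\sigma)$, and afterwards $d_s(\sigma)$ does not increase; if $\sigma$ never enters $\Gamma_s$ then $d_s(\sigma)\le\theta_s(\sigma)$ outright. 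For $|\sigma|>s$ there is nothing to prove since $d_s(\sigma)=0$, so I only need $2\le|\sigma|\le s$ (and singletons).

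First I would record the one feature of the weak thresholds that makes the recursion collapse: along any chain $\sigma\subset\{v_1\}\cup\sigma\subset\{v_1,v_2\}\cup\sigma\subset\cdots$ obtained by successively applying the displayed inequality, the quantity $|\sigma'|-s'$ is preserved, so $\theta_{s'}(\sigma')=\delta d\tau^{r-s'+|\sigma'|-1}=\delta d\tau^{r-s+|\sigma|-1}$ is the \emph{same} for every link of the chain, and likewise $d_r(\sigma_{r-s})=d(\sigma_{r-s})\le\delta d\tau^{|\sigma_{r-s}|-1}=\delta d\tau^{r-s+|\sigma|-1}$ at the top. Hence unfolding $d_s(\sigma)\le\theta_s(\sigma)+d_{s+1}(\{v\}\cup\sigma)$ all the way up to $P_r$ just adds $r-s$ equal copies of $\theta$ together with one copy of $d(\sigma_{r-s})$, giving $d_s(\sigma)\le(r-s+1)\delta d\tau^{r-s+|\sigma|-1}\le r\delta d\tau^{r-s+|\sigma|-1}$. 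Equivalently, one runs the induction with the sharper inductive hypothesis $d_{s+1}(\sigma')\le(r-s)\delta d\tau^{(r-s-1)+|\sigma'|-1}$ for $|\sigma'|\ge2$, whence $d_s(\sigma)\le\theta_s(\sigma)+d_{s+1}(\{v\}\cup\sigma)\le\delta d\tau^{r-s+|\sigma|-1}+(r-s)\delta d\tau^{r-s+|\sigma|-1}=(r-s+1)\delta d\tau^{r-s+|\sigma|-1}$, the base case $s=r$ being simply $d_r(\sigma)=d(\sigma)\le\delta d\tau^{|\sigma|-1}$. The vertex bound is then immediate from the same inequality applied to $\sigma=\{u\}$: here $\theta_s(u)=\tau^{r-s}d(u)$ supplies the $d(u)$ term, while $\{v,u\}$ has size $2$ so the (sharper) bound just proved gives $d_{s+1}(\{v,u\})\le(r-s)\delta d\tau^{r-s}$, and therefore $d_s(u)\le\tau^{r-s}d(u)+(r-s)\delta d\tau^{r-s}\le\tau^{r-s}(d(u)+r\delta d)$; if $\{u\}$ never enters $\Gamma_s$ then $d_s(u)\le\theta_s(u)=\tau^{r-s}d(u)$ and we are done.

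I do not expect any genuine obstacle: the weak thresholds were designed precisely so that this recursion telescopes, and the estimates on the right-hand side are term-by-term equal rather than requiring the delicate weighting $1+2^{\ell-1}\le2^\ell$ that drives Lemma~\ref{lem:sigmadeg}. The only point deserving care is the constant bookkeeping — one must carry the induction with the honest constant $r-s+1$ rather than the stated $r$, since an induction that re-used the constant $r$ at every level would leak a unit per step and fail to close, whereas the scale-invariance of $\theta$ guarantees that at most $r-s+1\le r$ equal contributions ever accumulate.
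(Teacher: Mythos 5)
Your proof is correct and takes essentially the same route as the paper: a downward induction on $s$ carrying the sharpened hypothesis $d_s(\sigma)\le(r-s+1)\delta d\tau^{r-s+|\sigma|-1}$ for $|\sigma|\ge2$, with the base case $s=r$ coming from the definition of $\delta$, and the singleton bound obtained by feeding the $|\sigma'|=2$ case of that estimate into $d_s(u)\le\theta_s(u)+d_{s+1}(\{v,u\})$. The ``telescoping'' observation and the remark that one must not weaken the inductive constant to $r$ are accurate glosses on what the paper does, not deviations from it.
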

\begin{proof}
We prove the bounds by induction on $r-s$; in fact we show
$d_s(\sigma)\le (r-s+1)\delta d \tau^{r-s+|\sigma|-1}$ for
$|\sigma|\ge2$. For $s=r$ the bounds hold by the definition
of~$\delta$ in Definition~\ref{defn:thetasw}.
Exactly as in the proof of Lemma~\ref{lem:sigmadeg}, we have
$d_s(\sigma)\le\theta_s(\sigma)+d_{s+1}(\{v\}\cup\sigma)$ for some
$v\notin\sigma$. So for $|\sigma|\ge2$ we have, by applying the induction
hypothesis to $\{v\}\cup\sigma$, 
$$
d_s(\sigma)\le \delta d\tau^{r-s+|\sigma|-1} + (r-s)\delta d
\tau^{r-s-1+|\sigma|} = (r-s+1)\delta d\tau^{r-s+|\sigma|-1}
$$
as claimed. For $\sigma=\{u\}$ we apply the induction hypothesis to
$\sigma=\{v,u\}$ to obtain
$$
d_s(u)\le \tau^{r-s}d(u)+(r-s)\delta d \tau^{r-s-1+2-1}\le
\tau^{r-s}(d(u)+r\delta d)
$$
again as claimed. This completes the proof.
\end{proof}

\begin{lem}\label{lem:maxdegw}
  Let $G$ be an $r$-graph on vertex set $[n]$ with average degree~$d$. Let
  $P_r=E(G)$ and let $P_{r-1},\ldots,P_1$ be the multisets constructed
  during the algorithm using the weak threshold functions, either in
  build mode or in prune mode. Then
$$
\sum_{u\in U}d_s(u)\,\le\,(\mu(U)+r\delta)\, \tau^{r-s}\,nd
$$
holds for all subsets $U\subset[n]$ and for $1\le s\le r$.
\end{lem}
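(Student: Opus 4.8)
The plan is to reduce Lemma~\ref{lem:maxdegw} directly to the per-vertex degree bound already proved in Lemma~\ref{lem:sigmadegw}, exactly in the spirit of the proof of Lemma~\ref{lem:maxdeg} but with the crude weak threshold functions making the bookkeeping almost trivial. First I would recall that Lemma~\ref{lem:sigmadegw} gives $d_s(u)\le\tau^{r-s}(d(u)+r\delta d)$ for every $u\in[n]$ and every $1\le s\le r$. So I would simply sum this over $u\in U$:
\begin{align*}
\sum_{u\in U}d_s(u)\,&\le\,\tau^{r-s}\sum_{u\in U}\bigl(d(u)+r\delta d\bigr)\\
&=\,\tau^{r-s}\Bigl(\sum_{u\in U}d(u)+|U|\,r\delta d\Bigr)\,.
\end{align*}

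Next I would rewrite the two pieces in terms of $\mu$. By definition of degree measure, $\sum_{u\in U}d(u)=\mu(U)\,nd$, and since $|U|\le n$ we have $|U|\,r\delta d\le r\delta\,nd$. Substituting gives $\sum_{u\in U}d_s(u)\le\tau^{r-s}\bigl(\mu(U)\,nd+r\delta\,nd\bigr)=(\mu(U)+r\delta)\tau^{r-s}nd$, which is precisely the claimed bound. One small point worth a sentence in the writeup: this works uniformly whether the algorithm was run in prune mode or build mode, because Lemma~\ref{lem:sigmadegw} itself is stated for either mode, the multisets $P_s$ being identical in the two modes on the shared input.

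I do not expect any real obstacle here; the lemma is essentially a one-line corollary of Lemma~\ref{lem:sigmadegw} together with the definition of $\mu$. The only thing to be slightly careful about is the bound $|U|\le n$ — one should not be tempted to write $\sum_{u\in U} r\delta d = \mu(U)\cdot(\text{something})$, since the second term is a flat contribution of size proportional to $|U|$, not to $\sum_{u\in U}d(u)$; bounding $|U|$ by $n$ rather than trying to express it via $\mu$ is what makes the two terms combine cleanly. (This is the weak-threshold analogue of the place where, in Lemma~\ref{lem:maxdeg}, one bounds $\sum_{u\in U}d^{(j)}(u)$ by $\sum_{u\in[n]}d^{(j)}(u)$.) After that the proof is complete.
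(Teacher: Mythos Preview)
Your proof is correct and is essentially identical to the paper's own proof: sum the per-vertex bound $d_s(u)\le\tau^{r-s}(d(u)+r\delta d)$ from Lemma~\ref{lem:sigmadegw} over $u\in U$, use $\sum_{u\in U}d(u)=\mu(U)nd$, and bound $|U|\le n$. The paper does this in one displayed line without the surrounding commentary, but the content is the same.
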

\begin{proof} The inequalities
$$
\sum_{u\in U}d_s(u)\,\le\,\sum_{u\in U}\tau^{r-s}(d(u)+r\delta d)
\le(\mu(U)+r\delta)\, \tau^{r-s}nd
$$
follow immediately from Lemma~\ref{lem:sigmadegw} and the definition of~$\mu$.
\end{proof}

We now turn to bounds on $T$. The argument is essentially identical to the
one in~\S\ref{subsec:strongth}, which in turn is that in~\cite{ST2}, but we
give details for completeness.

\begin{lem}\label{lem:muTs}
Let $T$ be produced by the algorithm in prune mode, using weak threshold
functions. Then $\mu(T\setminus\Gamma_1)\le (r-1)(\tau/ \zeta)(1+r\delta)$.
\end{lem}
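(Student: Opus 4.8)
The plan is to bound $\mu(T\setminus\Gamma_1)$ by controlling each set $T_s$ for $s\ge 1$ separately, in exactly the manner used for the strong thresholds in~\S\ref{subsec:strongth} (and originally in~\cite[Lemma~5.3]{ST2}), but now feeding in the weak degree bound of Lemma~\ref{lem:maxdegw} in place of Lemma~\ref{lem:maxdeg}. First I would recall that if $v$ enters $T$ for some reason other than $v\in\Gamma_1$, then by the membership rule $v\notin B$ (so $d(v)\ge\zeta d$) and $|F_{v,s}|\ge\zeta\tau^{r-s-1}d(v)$ for some $s\in\{1,\dots,r-1\}$; let $T_s$ be the set of such $v\in I$ for that particular~$s$, so that $T\setminus\Gamma_1\subseteq T_1\cup\cdots\cup T_{r-1}$. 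It therefore suffices to show $\mu(T_s)\le(\tau/\zeta)(1+r\delta)$ for each $s$, since summing over the $r-1$ values of $s$ gives the claim.

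The key step is a double-counting of $|P_s|$. When a vertex $v\in T_s$ is processed, the multiset $F_{v,s}$ is added to $P_s$, and these $|F_{v,s}|\ge\zeta\tau^{r-s-1}d(v)$ sets are pairwise distinct across different vertices of $T$, because each $f\in F_{v,s}$ lies in $[v+1,n]^{(s)}$ and so records $v$ as the smallest vertex of the edge it came from (this is the same disjointness observation as in~\cite{ST2}). Hence
\begin{equation*}
\zeta\tau^{r-s-1}\sum_{v\in T_s}d(v)\ \le\ \sum_{v\in T_s}|F_{v,s}|\ \le\ |P_s|\ =\ \frac{1}{s}\sum_{u\in[n]}d_s(u)\ \le\ \frac{1}{s}(\mu([n])+r\delta)\,\tau^{r-s}\,nd,
\end{equation*}
where the last inequality is Lemma~\ref{lem:maxdegw} applied with $U=[n]$, so that $\mu([n])=1$. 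On the other hand $\sum_{v\in T_s}d(v)=\mu(T_s)\,nd$ by the definition of degree measure. Cancelling $nd$ and rearranging gives $\mu(T_s)\le\tfrac{1}{s}(1+r\delta)\tau/\zeta\le(1+r\delta)\tau/\zeta$, as required.

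I do not anticipate a serious obstacle here: the combinatorial heart — that the sets $F_{v,s}$ contributed by distinct vertices are disjoint, and that their sizes are bounded below on $T_s$ by the membership rule — is exactly as in~\cite{ST2}, and the only genuinely new ingredient is the substitution of the weak bound $\sum_u d_s(u)\le(1+r\delta)\tau^{r-s}nd$ from Lemma~\ref{lem:maxdegw}. The one point requiring a sentence of care is that we are bounding $T\setminus\Gamma_1$ rather than $T$: vertices of $\Gamma_1\cap I$ (the analogue of $T_0$ in~\cite{ST2}) may enter $T$ without any $F_{v,s}$ being large, and these are precisely the ones excluded from the statement, so the union bound $T\setminus\Gamma_1\subseteq\bigcup_{s=1}^{r-1}T_s$ is exactly what is needed and no sparsity hypothesis on $G[I]$ is invoked at this stage.
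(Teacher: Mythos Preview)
Your proof is correct and follows essentially the same approach as the paper's: define $T_s$, observe $T\setminus\Gamma_1\subseteq\bigcup_{s=1}^{r-1}T_s$, bound each $\mu(T_s)$ via $\sum_{v\in T_s}|F_{v,s}|\le|P_s|$ together with Lemma~\ref{lem:maxdegw} applied to $U=[n]$, and sum. One small remark on phrasing: your disjointness justification is not quite the right picture (the same $s$-set $f$ can lie in both $F_{v_1,s}$ and $F_{v_2,s}$ for $v_1\ne v_2$, since $f$ itself does not record~$v$), but the inequality $\sum_{v\in T_s}|F_{v,s}|\le|P_s|$ holds simply because $P_s$ is a multiset and every $F_{v,s}$ with $v\in T$ is added to it --- which is exactly how the paper states it.
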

\begin{proof}
  For $1\le s\le r-1$, let $T_s=\{v\in
  T:|F_{v,s}|\ge\zeta\tau^{r-s-1}d(v)\}$. From the operation of the
  algorithm we see that $(T\setminus\Gamma_1)\subset T_1\cup\cdots\cup
  T_{r-1}$ (the sets here need not be disjoint). For each~$s$, the sets
  $F_{v,s}$ for $v\in T_s$ are added to $P_s$ and, because $P_s$ is a
  multiset, we obtain
$$
\zeta \tau^{r-s-1}nd\mu(T_s)=\zeta \tau^{r-s-1}\sum_{v\in T_s}d(v)
\le |P_s|={1\over s}\sum_{u\in [n]} d_s(u)\le {1\over
  s}\tau^{r-s}nd(1+r\delta)
$$
by Lemma~\ref{lem:maxdegw} with $U=[n]$. Thus $\mu(T_s)\le
(\tau/\zeta)(1+r\delta)$, and $\mu(T\setminus\Gamma_1)\le
\mu(T_1)+\cdots+\mu(T_{r-1}) \le (r-1)(\tau/\zeta)(1+r\delta)$.
\end{proof}

\begin{lem}\label{lem:muT0}
  Let $T$ be produced by the algorithm in prune mode, with input $I$ and
  using weak threshold functions. If $G[I]$ is $\lfloor (\zeta/r)
  \tau^{r-1}d\rfloor$-degenerate, or if $e(G[I])\le (r/\zeta)\tau^r e(G)$,
  then $\mu(T\cap\Gamma_1)\le (\tau/\zeta)(1+r\delta)$.
\end{lem}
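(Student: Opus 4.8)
The plan is to write $T_0 = T\cap\Gamma_1$ and combine two observations. On the one hand, for each $v\in T_0$ the fact that $\{v\}$ entered $\Gamma_1$ gives $d_1(v)\ge\theta_1(\{v\})=\tau^{r-1}d(v)$. On the other hand, every unit of $d_1(v)$ can be traced, through the construction of $P_1$ described in \S\ref{subsec:actual}, back to a distinct edge $e\in E(G)$: an edge is stripped of its vertices one at a time, always the smallest first, and contributes to $P_1$ at most once, namely when it has been reduced to its single largest vertex; that vertex is therefore $v$, and the other $r-1$ vertices of $e$ all lie in $T$. Since $v\in T$ as well, $e$ is an edge of $G[T]$, and since an edge has a unique largest vertex these edges are distinct as $v$ runs over $T_0$. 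Hence $\tau^{r-1}\sum_{v\in T_0}d(v)\le\sum_{v\in T_0}d_1(v)\le e(G[T])\le e(G[I])$. Recalling $\mu(T_0)=(1/nd)\sum_{v\in T_0}d(v)$ and $nd=r\,e(G)$, the lemma is reduced to bounding $e(G[T])$ (or $e(G[I])$).

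In the case $e(G[I])\le(r/\zeta)\tau^r e(G)$ this is immediate: the chain of inequalities above gives $\tau^{r-1}nd\,\mu(T_0)\le(r/\zeta)\tau^r e(G)=\tau^r nd/\zeta$, so $\mu(T_0)\le\tau/\zeta\le(\tau/\zeta)(1+r\delta)$.

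For the degenerate case I would apply the degeneracy hypothesis to $G[T]$ rather than to $G[I]$. Since $T\subseteq I$, the hypergraph $G[T]$ is also $\lfloor(\zeta/r)\tau^{r-1}d\rfloor$-degenerate, so $e(G[T])\le\lfloor(\zeta/r)\tau^{r-1}d\rfloor\,|T|\le(\zeta/r)\tau^{r-1}d\,|T|$. Every $v\in T$ passed the membership rule and so satisfies $v\notin B$, i.e.\ $d(v)\ge\zeta d$; hence $|T|\le(1/\zeta d)\sum_{v\in T}d(v)=(n/\zeta)\mu(T)$. Feeding both bounds into $\tau^{r-1}\sum_{v\in T_0}d(v)\le e(G[T])$ from the first paragraph gives $\tau^{r-1}nd\,\mu(T_0)\le(\zeta/r)\tau^{r-1}d\cdot(n/\zeta)\mu(T)=(1/r)\tau^{r-1}nd\,\mu(T)$, that is $\mu(T_0)\le\mu(T)/r$. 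Finally $T_0$ and $T\setminus\Gamma_1$ partition $T$, so $\mu(T)=\mu(T\setminus\Gamma_1)+\mu(T_0)$; substituting and rearranging gives $(r-1)\mu(T_0)\le\mu(T\setminus\Gamma_1)$, and Lemma~\ref{lem:muTs} finishes the job, $\mu(T_0)\le\mu(T\setminus\Gamma_1)/(r-1)\le(\tau/\zeta)(1+r\delta)$. (Here I assume $r\ge2$; for $r=1$ the statement is vacuous.)

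The only real obstacle will be the mild self-reference in the degenerate case: the natural bound on $e(G[T])$ involves $|T|$, hence $\mu(T)$, hence $\mu(T_0)$ itself. This is resolved exactly by splitting $\mu(T)=\mu(T\setminus\Gamma_1)+\mu(T_0)$ and invoking the already-established Lemma~\ref{lem:muTs}, mirroring the treatment of this set in \cite{ST2}; it is also the reason the constant $\zeta/r$, rather than $\zeta$, appears in the degeneracy hypothesis. Apart from that, the one point to check carefully is the bookkeeping claim that $d_1(v)$ is at most the number of edges of $G[T]$ whose largest vertex is $v$, which is a direct consequence of how a set gets added to $P_1$ in the algorithm.
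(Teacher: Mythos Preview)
Your proof is correct and follows essentially the same route as the paper's own argument: the key inequality $\tau^{r-1}nd\,\mu(T_0)\le e(G[T])$, the direct treatment of the edge-count case, and in the degenerate case the chain $e(G[T])\le(\zeta/r)\tau^{r-1}d\,|T|$, $|T|\le(n/\zeta)\mu(T)$ leading to $r\mu(T_0)\le\mu(T)$ and hence $(r-1)\mu(T_0)\le\mu(T\setminus\Gamma_1)$, finished off by Lemma~\ref{lem:muTs}. The exposition differs only cosmetically.
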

\begin{proof}
  Write $T_0=T\cap\Gamma_1$. For each $v\in T_0$,
  $d_1(v)\ge\theta_1(v)=\tau^{r-1}d(v)$ holds because $v\in\Gamma_1$. We
  noted earlier, in~\S\ref{subsec:actual}, that each set $f\in P_s$ comes
  from some set $t\subset T$, $|t|=r-s$, such that $t\cup f \in E(G)$, and
  $f$ comprises the last $s$ vertices of the edge $t\cup f$. In particular,
  each set $\{v\}$ in the multiset $P_1$ comes from an edge $t\cup\{v\}$
  where $t\subset T$ and $v$ is the last vertex of $t\cup\{v\}$. So if
  $v\in T_0$ then the edge $t\cup\{v\}$ lies inside~$T$. Moreover there are
  $d_1(v)$ such edges with last vertex~$v$. Hence
  $$
  \tau^{r-1}nd\mu(T_0)=\tau^{r-1}\sum_{v\in T_0} d(v) \le \sum_{v\in T_0}
  d_1(v)\le e(G[T])\,.
  $$
 
  Consider first the case that $G[I]$ is $b$-degenerate, where $b\le
  (\zeta/r) \tau^{r-1}d$. Then $e(G[T])\le b|T|$, and thus $\tau^{r-1}nd\mu(T_0)
  \le e(G[T])\le b|T|\le (\zeta/r) \tau^{r-1}d|T|$, meaning
  $rnd\mu(T_0)\le \zeta d|T|$. Now if $v\in T$ then $v$
  passes the membership rule and so $v\notin B$; consequently $d(v)\ge \eta
  d$, and hence $|T|\zeta d\le \sum_{v\in T}d(v)=nd\mu(T)$. We thus have
  $rnd\mu(T_0)\le \zeta d|T| \le nd\mu(T)$, that is, $r\mu(T_0)\le
  \mu(T)$. But $\mu(T)=\mu(T\cap
  \Gamma_1)+\mu(T\setminus\Gamma_1)=\mu(T_0)+\mu(T\setminus\Gamma_1)$. 
  and so $(r-1)\mu(T_0)\le \mu(T\setminus\Gamma_1)$. The bound
  $\mu(T_0)\le (\tau/\zeta)(1+r\delta)$ now follows from
  Lemma~\ref{lem:muTs}.

  Now consider the case that $e(G[I])\le (r/\zeta)\tau^r e(G) =
  \tau^rnd/\zeta$. Then we have directly that $\tau^{r-1}nd\mu(T_0)\le
  e(G[T]) \le e(G[I])\le \tau^r nd/\zeta$, so $\mu(T_0)\le \tau/\zeta \le
  (\tau/\zeta)(1+r\delta)$.
\end{proof}

We come now to the bound on the measure $\mu(C)$ of the containers; this
bound comes from the following lemma.

\begin{lem}\label{lem:es}
Let $C$ be the set produced by the algorithm in build mode, using weak
thresholds. Let $D=([n]-C)\cup T\cup B$. Define $e_s$ by the equation
$|P_s|=e_s\tau^{r-s}nd$ for $1\le s\le r$. Then
\begin{align*}
e_{s+1} &\le\, r2^se_s+\mu(D)+\zeta+2r\delta &\mbox{for $r-1\ge s\ge 2$}\\
e_{s+1} &\le\, 2\mu(D)+\zeta+3r\delta &\mbox{for $s=1$.}
\end{align*}
\end{lem}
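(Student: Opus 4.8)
The plan is to bound $|P_{s+1}|$ by accounting for \emph{where the edges of $P_{s+1}$ end up} as the algorithm processes vertices $v=1,\ldots,n$. Each edge $f\in P_{s+1}$, say with first vertex $v$, is inspected when $v$ is processed: it either contributes its tail to $F_{v,s}$ (if no $\sigma\in\Gamma_s$ lies in $f\setminus\{v\}$), or it is ``blocked'' because some $\sigma\in\Gamma_s$ sits inside $f$. I would partition $P_{s+1}$ accordingly. First I would split off the edges whose first vertex $v$ fails the membership rule, i.e.\ $v\in B$ or ($v\notin B$ but none of the tests fire). If $v$ never enters $T$, then $v\in C$ unless $v\in B$; the number of such edges is controlled by $\sum_{v\in C\setminus B} d_{s+1}(v)$ plus $\sum_{v\in B}d_{s+1}(v)$, and by Lemma~\ref{lem:maxdegw} applied with $U=C\setminus B$ and $U=B$ respectively these sum to at most $(\mu(C\setminus B)+\mu(B)+r\delta)\tau^{r-s-1}nd\le(1+r\delta)\tau^{r-s-1}nd$; actually I want this in terms of $\mu(D)$, so I'd note $([n]\setminus C)\cup B\subset D$ and instead bound the ``bad $v$'' contribution together with the $v\in T$ contribution below. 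The cleanest bookkeeping: edges of $P_{s+1}$ with first vertex $v\in T$ number at most $\sum_{v\in T}d_{s+1}(v)\le(\mu(T)+r\delta)\tau^{r-s-1}nd$ by Lemma~\ref{lem:maxdegw}, and since $T\subset D$ this is $\le(\mu(D)+r\delta)\tau^{r-s-1}nd$.

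Next, among edges whose first vertex $v$ passes the rule but $v\notin T$ — these are exactly the edges accounting for the complement of $C$ or for $B$: if $v$ passes the rule and $v\notin T$, then in build mode $v$ is removed from $C$ (unless it was already out, or in $B$), so $v\in([n]\setminus C)\cup B\cup T\subset D$; the count is again $\le\sum_{v\in D}d_{s+1}(v)\le(\mu(D)+r\delta)\tau^{r-s-1}nd$. Finally, for edges with first vertex $v\in T$ that pass the rule: their tails either go into $F_{v,s}$ (hence into $P_s$, contributing to $|P_s|=e_s\tau^{r-s}nd$, but with a multiplicity/counting factor since one edge of $P_{s+1}$ yields one $s$-set, so this is just $\le|P_s|$ \dots wait, the factor $r2^s$ must come from somewhere) — the factor $r2^s$ arises because an edge of $P_{s+1}$ blocked by some $\sigma\in\Gamma_s$ with $\sigma\subset f$ must be charged to the pair $(\sigma,f)$, and I'd bound the number of blocked edges by $\sum_{\sigma\in\Gamma_s,|\sigma|\ge2}d_{s+1}(\sigma)$ times the number of $(s+1-|\sigma|)$-subsets; here Lemma~\ref{lem:sigmadegw} gives $d_{s+1}(\sigma)\le r\delta d\tau^{r-s-1+|\sigma|-1}$, and summing over $\sigma$ with a crude count of at most $2^s$ choices of which coordinates of the tail $\sigma$ occupies, times the $\sigma=\{v'\}\cup(\cdot)$ singleton-extension bookkeeping, produces the $r2^s$. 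The $s=1$ case differs because there is no $\Gamma_1$-blocking of the kind $|\sigma|\ge2$ inside a $2$-set except $\sigma$ itself a single vertex, so the blocked term collapses and one instead picks up a second copy of $\mu(D)$ from the $v\in\Gamma_1$ membership branch (the ``$v\in\Gamma_1$'' clause of the rule), giving $2\mu(D)$; the $3r\delta$ rather than $2r\delta$ is the corresponding change in the error term.

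Assembling: $|P_{s+1}|\le(\text{into }P_s)+(\text{blocked by }\Gamma_s)+(\text{first vertex in }D)$, i.e.\ $e_{s+1}\tau^{r-s-1}nd\le r2^s e_s\tau^{r-s}nd/\tau \cdot(\cdot)$ — I'd be careful to track the power of $\tau$, noting $|P_s|=e_s\tau^{r-s}nd$ versus $|P_{s+1}|=e_{s+1}\tau^{r-s-1}nd$ so the ratio carries one factor of $\tau$, which is why $e_s$ appears with the combinatorial constant $r2^s$ absorbing the $\tau$-normalisation of Lemma~\ref{lem:sigmadegw}; dividing through by $\tau^{r-s-1}nd$ gives exactly the two displayed inequalities. \textbf{The main obstacle} I expect is the combinatorial charging for the ``blocked'' edges: getting the constant $r2^s$ (and not something worse) requires charging each blocked $f\in P_{s+1}$ to a well-chosen minimal $\sigma\in\Gamma_s$, $\sigma\subset f\setminus\{v\}$, and then using Lemma~\ref{lem:sigmadegw}'s bound $d_{s+1}(\{v\}\cup\sigma)\le r\delta d\tau^{r-s+|\sigma|-1}$ summed over the at most $\binom{s}{|\sigma|-1}\le 2^s$ positions, with the extra factor $r$ coming from summing the geometric-type series over $|\sigma|=2,\ldots,s+1$; keeping this sum from blowing up is the delicate point, and it is exactly the analogue of the corresponding step in \cite{ST2}. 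Everything else is Lemma~\ref{lem:maxdegw} bookkeeping and is routine.
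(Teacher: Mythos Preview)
Your overall decomposition is right --- split $|P_{s+1}|$ according to whether the first vertex $v$ of an edge lies in $C'=C\setminus(T\cup B)$ or in $D=[n]\setminus C'$, and for $v\in C'$ split further into the $|F_{v,s}|$ edges whose tail survives and the $f_{s+1}(v)-|F_{v,s}|$ edges blocked by some $\sigma\in\Gamma_s$. The $\mu(D)+r\delta$ term (from $\sum_{v\in D}d_{s+1}(v)$ and Lemma~\ref{lem:maxdegw}) and the $\zeta$ term (from $|F_{v,s}|<\zeta\tau^{r-s-1}d(v)$ for $v\in C'$, which you don't state explicitly but need) are fine.

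The genuine gap is your derivation of the $r2^s e_s$ term for the blocked edges. You propose to bound $\sum_{\sigma\in\Gamma_s}d_{s+1}(\sigma)$ using only the \emph{upper} bound $d_{s+1}(\sigma)\le r\delta d\tau^{r-s+|\sigma|-2}$ from Lemma~\ref{lem:sigmadegw}, summed over positions. But this produces something of the form $r\delta d\tau^{r-s-1}$ times a count of $\sigma\in\Gamma_s$, which is a bound in terms of $|\Gamma_s|$, not $|P_s|$; there is no ``geometric series over $|\sigma|$'' that converts this into $r2^s e_s$. What you are missing is the \emph{lower} bound: $\sigma\in\Gamma_s$ means, by definition, $d_s(\sigma)\ge\theta_s(\sigma)=\delta d\tau^{r-s+|\sigma|-1}$ (for $|\sigma|\ge 2$). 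Combining this with the upper bound from Lemma~\ref{lem:sigmadegw} gives $d_{s+1}(\sigma)\le (r/\tau)\,d_s(\sigma)$ for each such $\sigma$. Now sum: $\sum_{\sigma\in\Gamma_s}d_{s+1}(\sigma)\le(r/\tau)\sum_{\sigma\in\Gamma_s}d_s(\sigma)\le(r/\tau)\sum_{\sigma\subset[n]}d_s(\sigma)=(r/\tau)\,2^s|P_s|$, the last equality because each $s$-edge of $P_s$ has $2^s$ subsets. This is where both the $r$ and the $2^s$ come from, and it is the heart of the lemma. (For singletons $\sigma=\{u\}\in\Gamma_s$ the analogous comparison gives an extra $r\delta d\tau^{r-s-1}$ per vertex, accounting for the remaining $r\delta$ in the error; for $s=1$ there are only singletons, and one uses $\Gamma_1\subset D$ together with Lemma~\ref{lem:maxdegw} to get the second $\mu(D)$.) Without this lower-bound step your charging scheme does not close.
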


\begin{rem}
  Bounding $\mu(C)$ above is equivalent to bounding $\mu(D)$ from
  below. The lemma captures the spirit behind the algorithm, as discussed
  in~\cite{ST2}, that either all $P_s$ are large, in which case $\Gamma_1$
  is substantial and so $D$ is also (this is what lies behind the second
  inequality), or, for some $s$, $P_{s+1}$ is large but $P_s$ is small,
  which makes $D$ substantial by the first inequality.
\end{rem}

\begin{proof}
The way the algorithm builds $C$ means that $T\cup B\subset C$.
Let $C'=C-(T\cup B)$, so $D=[n]-C'$. Again, by the operation of the
algorithm, if $v\in \Gamma_1\setminus (T\cup B)$ then $v\notin C$, which
means that $\Gamma_1\subset D$.

For $v\in[n]$ let $f_{s+1}(v)$ be the number of sets in $P_{s+1}$ for
which $v$ is the first vertex in the vertex ordering. Then
\begin{equation}
|P_{s+1}|=\sum_{v\in [n]}f_{s+1}(v)=\sum_{v\in C'}f_{s+1}(v)+
\sum_{v\in D}f_{s+1}(v)
\qquad\mbox{for $1\le s<r$.}\label{eqn:Ps}
\end{equation}
By definition of $|F_{v,s}|$, of the $f_{s+1}(v)$ sets in $P_{s+1}$ beginning
with~$v$, $f_{s+1}(v)-|F_{v,s}|$ of them contain some $\sigma\in\Gamma_s$.  If
$v\in C'$ then $v\notin \Gamma_1$, $v\notin B$ and $v\notin T$ and so,
since $v\in C$, we have $|F_{v,s}|<\zeta\tau^{r-s-1}d(v)$. Therefore, writing
$P\Gamma$ for the multiset of edges in $P_{s+1}$ that contain some $\sigma\in
\Gamma_s$, we have
\begin{equation}
\sum_{v\in C'}f_{s+1}(v)-\zeta\tau^{r-s-1}d(v)< |P\Gamma|\le
\sum_{\sigma\in \Gamma_s} d_{s+1}(\sigma)\,.\label{eqn:Pgamma}
\end{equation}
By definition, if $\sigma\in\Gamma_s$ and $|\sigma|\ge2$, then $d_s(\sigma)\ge
\theta_s(\sigma)= \delta d\tau^{r-s+|\sigma|-1}$. Using
Lemma~\ref{lem:sigmadegw}, we then see that $d_{s+1}(\sigma)\le r\delta
d\tau^{r-s+|\sigma|-2}\le (r/\tau)d_s(\sigma)$. Similarly, if
$\sigma=\{u\}\in \Gamma_s$ then $d_s(\sigma)\ge \tau^{r-s}d(u)$ and
$d_{s+1}(\sigma)\le \tau^{r-s-1}(d(u)+r\delta d)\le (1/\tau) d_s(\sigma) +
r\delta d\tau^{r-s-1}$. Therefore, for $s\ge2$, we obtain
\begin{equation}
\sum_{\sigma\in \Gamma_s} d_{s+1}(\sigma)
\le {r\over\tau} \sum_{\sigma\in \Gamma_s} d_s(\sigma)
+\sum_{\{u\}\in \Gamma_s}r\delta d\tau^{r-s-1}
\le {r\over\tau}2^s|P_s| + r\delta nd\tau^{r-s-1}\,.\label{eqn:ss}
\end{equation}
For $s=1$ the possibility $|\sigma|\ge2$ does not arise, and we obtain
\begin{align}
\sum_{\sigma\in \Gamma_s} d_{s+1}(\sigma)
&\le \sum_{\{u\}\in \Gamma_1}\left({1\over \tau}d_1(u)+r\delta
  d\tau^{r-2}\right)\notag\\
&\le \tau^{r-2}nd(\mu(\Gamma_1)+r\delta) + r\delta n  d\tau^{r-2}
&\mbox{by Lemma~\ref{lem:maxdegw}}\notag\\
&\le\tau^{r-2}nd\mu(D)+2r\delta n  d\tau^{r-2} &\mbox{since
  $\Gamma_1\subset D$.}\label{eqn:s2}
\end{align}
Finally, making use of~(\ref{eqn:Ps}) and~(\ref{eqn:Pgamma}) together with
Lemma~\ref{lem:maxdegw}, we have
\begin{align*}
e_{s+1}\tau^{r-s-1}nd=|P_{s+1}| &=\sum_{v\in C'}f_{s+1}(v)+
\sum_{v\in D}f_{s+1}(v)\\
&\le \sum_{v\in C'}\zeta\tau^{r-s-1}d(v) + 
\sum_{\sigma\in \Gamma_s} d_{s+1}(\sigma)+ \sum_{v\in D}d_{s+1}(v) \\
&\le \zeta\tau^{r-s-1}nd + 
\sum_{\sigma\in \Gamma_s} d_{s+1}(\sigma)+ \tau^{r-s-1}nd(\mu(D)+r\delta)\,,
\end{align*}
The bounds~(\ref{eqn:ss}) and~(\ref{eqn:s2}) for 
$\sum_{\sigma\in \Gamma_s} d_{s+1}(\sigma)$ now give the result claimed.
\end{proof}

\begin{proof}[Proof of Theorem~\ref{thm:coverweak}]
  We begin by choosing the constant $c=c(r)$. Let
  $\gamma=(1/25)r^{-2r}2^{-r^2}$ and $c=\gamma^r$.  Let $G$ be as in the
  theorem and let $\tau$ be chosen so that (\ref{eqn:dag}) is satisfied.
  Let $\zeta=\sqrt{2r\gamma}$. For later use, we note $c\le \gamma \le
  \zeta/2r \le 2r\zeta \le 1$. 

  As might be expected, we prove the theorem by using the containers~$C$
  and the sets~$T$ supplied by the algorithm, using the weak threshold
  functions. However, the input parameters we supply to the algorithm are
  not $\tau$ and $\zeta$ as just defined, but instead $\tau_*=\gamma\tau$
  and~$\zeta$.

  The reason for using slightly different parameters is to obtain as clean
  a statement of Theorem~\ref{thm:coverweak} as possible, subject to not
  worrying about the best value of~$c$. For example, assertion~(b) of
  the theorem states $\mu(T)\le \tau$, whereas the corresponding assertion
  of Theorem~\ref{thm:cover} states $\mu(T)\le 2r\tau/\zeta$. What, in
  effect, we achieve by using $\tau^*$ instead of $\tau$ is that we shall
  obtain $\mu(T)\le 2r\tau^*/\zeta$, which (as we shall check) implies
  $\mu(T)\le\tau$. In a similar manner, all the other constants that might
  otherwise appear in the statement of the theorem are absorbed into the
  small constant~$c$.

  We therefore remind the reader that the values of~$\tau$ and~$\zeta$
  appearing in the lemmas above are those values input to the algorithm. This
  was highlighted at the start of~\S\ref{sec:analysis}. Hence in the
  present case, where we are using inputs $\tau^*$ and~$\zeta$, the
  conclusions of the lemmas hold with $\tau^*$ in place of~$\tau$. Again,
  as highlighted earlier, the value of $\delta$ in the lemmas is that
  supplied by Definition~\ref{defn:thetasw} with $\tau^*$ in place
  of~$\tau$. Explicitly, $\delta$ is (by definition) minimal such that
  $d(\sigma)\le\delta d\tau^{*(|\sigma|-1)}$ for all~$\sigma$. Now $\tau$
  was chosen to satisfy~(\ref{eqn:dag}), so we know that $d(\sigma)\le cd
  \tau^{(|\sigma|-1)}$. Since $c=\gamma^r$ this implies we know, for
  all~$\sigma$, that $d(\sigma)\le \gamma^r d \tau^{(|\sigma|-1)}\le \gamma
  d\tau^{*(|\sigma|-1)}$, because $\gamma\le 1$ and $|\sigma|\le
  r$. Consequently, by the minimality of~$\delta$, we have $\delta\le
  \gamma$.

  What remains is to verify the claims of the theorem. Condition~(a)
  follows from the general properties of the algorithm, as discussed
  in~\S\ref{subsec:goldenrule}, and the online property follows too, as
  explained in Remark~\ref{rem:online}.

  We know that either $G[I]$ is $\lfloor c \tau^{r-1} d\rfloor$-degenerate
  or $e(G[I])\le c\tau^re(G)$. Now $c\tau^{r-1}=\gamma\tau_*^{r-1}\le
  (\zeta/r)\tau_*^{r-1}$, and $c\tau^r=\tau_*^r\le (r/\zeta) \tau_*^r$. Hence
  the conditions of Lemma~\ref{lem:muT0} are satisfied. So, by
  Lemmas~\ref{lem:muTs} and~\ref{lem:muT0},
  $\mu(T)=\mu(T\setminus\Gamma_1)+\mu(T\cap\Gamma_1) \le
  (r\tau_*/\zeta)(1+r\delta)\le 2r\tau_*/\zeta=2r\gamma\tau/\zeta =
  \zeta\tau$, easily establishing condition~(b). Moreover $T\cap
  B=\emptyset$, so $|T|\zeta d\le\sum_{v\in T}d(v) = nd\mu(T)\le
  nd\zeta\tau$, giving condition~(c).

  To show that condition~(d) holds, note that
  $2r\delta\le2r\gamma\le\zeta$, and so by Lemma~\ref{lem:es} we
  comfortably have
  \begin{align*}
    e_{s+1} &\le\, r2^se_s+\mu(D)+2\zeta &\mbox{for $r-1\ge s\ge 2$}\\
    e_{s+1} &\le\, 2\mu(D)+4\zeta &\mbox{for $s=1$.}
  \end{align*}
  Dividing the bound for $e_{s+1}$ by $r^{s+1}2^{s+1\choose2}$ and adding over
  $s=1,\ldots,r-1$, we obtain
  $$
  {e_r\over r^r2^{r\choose2}}\le (\mu(D)+2\zeta)\left\{{1\over r^2}+{1\over
      r^3}{1\over 2^3}+{1\over r^4}{1\over 2^6}+\cdots\right\} \le
  (\mu(D)+2\zeta){2\over r^2}\,.
  $$
  Recall that $e_rnd=|P_r|=e(G)=nd/r$ so $e_r=1/r$. Hence $\mu(D)+2\zeta\ge
  r^{-r}2^{-{r\choose2}}=5 \gamma^{1/2} 2^{r/2}\ge 5\zeta$. So $\mu(D)\ge
  3\zeta$.  By definition, $D=[n]-(C-(T\cup B))$. Thus $\mu(C)\le
  1-\mu(D)+\mu(T)+\mu(B)$. We showed previously that $\mu(T)\le\zeta\tau$,
  so $\mu(T)\le\zeta$ because $\tau\le1$. Moreover $\mu(B)\le \zeta$ by
  definition of $B$. Therefore $\mu(C)\le 1-3\zeta+\zeta+\zeta = 1-\zeta\le
  1-c$, completing the proof.
\end{proof}

We finish with a proof of Corollary~\ref{cor:sparse_container}.

\begin{proof}[Proof of Corollary~\ref{cor:sparse_container}]
  Write $c_*$ for the constant $c(r)$ from Theorem~\ref{thm:coverweak}. We
  prove the corollary with $c=\epsilon\ell^{-r}c_*$, where
  $\ell = \lceil(\log\epsilon)/\log(1-c_*)\rceil$. Let $G$, $I$ and $\tau$
  be as stated in the corollary. We shall apply Theorem~\ref{thm:coverweak}
  several times. Each time we apply the theorem, we do so with with
  $\tau_*=\tau/\ell$ in place of $\tau$, with the same $I$, but with
  different graphs $G$, as follows (we leave it till later to check that the
  necessary conditions always hold). Given $I$, apply the theorem to find
  $T_1\subset I$ and $I\subset C_1=C(T_1)$, where $|T_1|\le \tau_* n$ and
  $\mu(C_1)\le 1-c_*$. It is easily shown that $e(G[C_1])\le \mu(C_1)e(G)\le
  (1-c_*)e(G)$ (this is \cite[inequality~(1)]{ST2}). Now $I$ is sparse in
  the graph $G[C_1]$ so apply the theorem again, to the $r$-graph $G[C_1]$,
  to find $T_2\subset I$ and a container $I\subset C_2$.  We have $|T_2|\le
  \tau_*|C_1|$, and $e(G[C_2])\le (1-c_*)e(G[C_1])\le (1-c_*)^2e(G)$. By
  Remark~\ref{rem:enlarge}, we note that, in the first application, 
  the algorithm in build mode
  would have constructed $C_1$ from input $T_1\cup T_2$, and would likewise
  have constructed $C_2$ from input $T_1\cup T_2$ in the second
  application. Thus $C_2$ is a function of $T_1\cup T_2$. We repeat this
  process $k$ times until we obtain the desired container $C=C_k$ with
  $e(G[C])\le \epsilon e(G)$. Since $e(G[C])\le (1-c_*)^ke(G)$ this occurs
  with $k\le\ell$. Put $T=T_1\cup\cdots\cup T_k$. Then $C$ is a function of
  $T\subset I$.

  We must check that the requirements of Theorem~\ref{thm:coverweak}
  are fulfilled at each application. Observe that, if $d_j$ is the average
  degree of $G[C_j]$ for $j<k$, then $|C_j|d_j=re(G[C_j])> r\epsilon
  e(G)=\epsilon nd$, and since $|C_j|\le n$ we have $d_j\ge\epsilon d$. The
  conditions of Corollary~\ref{cor:sparse_container} mean that
  $d(\sigma)\le c d\tau^{|\sigma|-1} =\epsilon \ell^{-r}c_*d\tau^{|\sigma|-1} <
  c_*d_j\tau_*^{|\sigma|-1}$; since the degree of $\sigma$ in $G[C_j]$ is at
  most $d(\sigma)$, this means that (\ref{eqn:dag}) is satisfied every time
  Theorem~\ref{thm:coverweak} is applied.

  Observe that, because $d_j\ge \epsilon d$, $e(G[C_j])>\epsilon e(G)$ and
  $c= \epsilon \ell^{-r}c_*$, then $G[I]$ is $\lfloor c_* \tau_*^{r-1}
  d_j\rfloor$-degenerate if it is $\lfloor c \tau^{r-1}
  d\rfloor$-degenerate, and $e(G[I])\le c_*\tau_*^re(G[C_j])$ if $e(G[I])\le
  c\tau^re(G)$. Therefore the theorem is being applied correctly each time.

  Finally condition~(c) of the theorem implies $|T_j|\le \tau_*|C_j|\le
  \tau_*n = \tau n/\ell$, and so $|T|\le k\tau n /\ell\le \tau n$,
  giving condition~(b) of the corollary and completing the proof.
\end{proof}

\section{Linear equations}\label{sec:sumfree}

Recall from~\S\ref{subsec:lineq} the definitions of a linear system
$(F,A,b)$, of $\mbox{ex}(F,A,b)$, of \emph{full rank} and of
\emph{abundant}.

Often one wishes to discount solutions to an equation $Ax=b$ where the
vector $x$ contains repeated values. For example, in forbidding a 3-term
arithmetic progression, we take $A = (1,1,-2)$ and $b=(0)$ and discount
solutions of the form $x + x - 2x = 0$. To accommodate this setup, we let
$Z \subset F^r$ be a set of discounted solutions. We then call $(F,A,b,Z)$
a \emph{$k\times r$ linear system}. A solution to this system is a vector
$x\in F^r - Z$ such that $Ax=b$, and a set $I\subset F$ is {\em
  solution-free} if there is no $x\in I^r - Z$ such that $Ax=b$.

In order to state the main theorem about linear systems, we need to define
the following parameter, following R\"odl and Ruci\'nski~\cite{RR}.

\begin{defn}\label{def:mA}
{\em If $F$ is a finite field  or $[N]$}, and $A$ is an abundant
 $k\times r$ matrix over~$F$, then we define
\[
 m_F(A) = \max_{J \subset [r],\, |J| \ge 2}
          \frac{|J|-1}{|J|-1+\mbox{rank}(A_J)-k},
\]
where the matrix $A_J$ is the $k\times(r-|J|)$ submatrix
of $A$ obtained by deleting columns indexed by $J$.
{\em If $F$ is an abelian group}, and $A$ is an abundant
 $k\times r$ integer matrix, then let $t$ be the maximum value of $j$
for which $A_J$ has full rank whenever $|J|=j$, and define 
\[
 m_F(A) = \frac{k+t-1}{t-1}.
\]
\end{defn}

It can readily be checked that if $A$ is abundant then the denominators
appearing in the definition of $m_F(A)$ are strictly positive.
The separate definition of $m_F(A)$ when $F$ is an abelian group is necessary
since the rank of an integer matrix over an abelian group is not well-defined;
in general, when the pair $(F,A)$ could either be considered a finite field
or an abelian group with $A$ integer valued, the value of the second definition
is at least as big as the value of the first definition. This is since
$\mbox{rank}(A_J)=k$ when $|J|\le t$, and is otherwise at least
$\max\{0,k+t-|J|\}$.

From our point of view, the parameter $m_F(A)$ plays a role for
solution-free sets very similar to the role that the parameter $m(H)$ plays
for $H$-free graphs. Our main theorem here about linear systems,
Theorem~\ref{thm:eqn_cover}, gives containers for solution-free subsets,
and the number of containers depends on~$m_F(A)$; in like manner, our main
theorem about $H$-free graphs, \cite[Theorem~2.3]{ST2}, gives containers
for $H$-free graphs and the number of them depends on~$m(H)$. All our
further results about linear systems, for example
Theorem~\ref{thm:eqn_sparse} about sparse systems, are applications of
Theorem~\ref{thm:eqn_cover}, in the same way that all the results
in~\cite{ST2} about $H$-free graphs, such as \cite[Theorem~2.12]{ST2} for
sparse graphs, are applications of \cite[Theorem~2.3]{ST2}.

To understand why the parameter $m(H)$ takes the form it does, it is
easiest to look at the application to sparse graphs, where a simple
argument shows that \cite[Theorem~2.12]{ST2} is best possible, and so, by
implication, the number of containers in \cite[Theorem~2.3]{ST2} must
depend on~$m(H)$. To illuminate the form of the parameter $m_F(A)$, a
similar argument can be put forward for sparse linear systems, showing that
Theorem~\ref{thm:eqn_sparse} is best possible. However, it does not apply
in every case, because sometimes extra conditions are needed (discussed by
R\"odl and Ruci\'nski~\cite{RR}). Fortunately, these extra conditions play
no part in Theorem~\ref{thm:eqn_cover}.

The argument is as follows. Assume, for simplicity, that the system is
$Ax=0$ and that $F$ is a finite field. We show that, when a random subset
$X\subset F$ is selected by choosing elements each with probability~$p$,
and $p$ is substantially smaller than $|F|^{-1/m_F(A)}$, then there is
(almost surely) a solution-free subset $X^*\subset X$ which is nearly as
large as~$X$. In this sense Theorem~\ref{thm:eqn_sparse} is best
possible. Let $A_J$ be a maximizing submatrix in the definition of $m_F(A)$
and let $B_J$ be the $k\times|J|$ submatrix deleted from $A$ to form~$A_J$.
Write $\langle A_J\rangle$, $\langle B_J\rangle$ for the spaces spanned by
the columns of $A_J$, $B_j$ respectively, and let their dimensions be
$\alpha=\mbox{rank}(A_J)$ and $\beta=\mbox{rank}(B_J)$.  Let $W=\langle
A_J\rangle\cap \langle B_J\rangle$. Since the sum $\langle
A_J\rangle+\langle B_J\rangle$ is the space spanned by the columns of the
full rank matrix~$A$, it has dimension~$k$, and so
$\mbox{dim}(W)=\alpha+\beta-k$. For each $x\in F^r$, let $x'$ be its
projection onto the coordinates indexed by $J$. If $Ax=0$ then $B_Jx'\in
\langle A_J\rangle$, and so $B_Jx'\in W$. For each vector $w\in W$ there
are at most $|F|^{|J|-\beta}$ solutions $x'$ to $B_Jx'=w$ (see
Fact~\ref{fact:eqn_deg_codeg}), so if $V=\{x':Ax=0\}$ then
$|V|\le|F|^{|J|-\beta}|W|=|F|^{|J|-\beta+{\rm dim}(W)} =
|F|^{|J|+\alpha-k}$.  Let $X$ be chosen randomly as just described, with
$p$ much smaller than $|F|^{-1/m_F(A)}$. Then $|X|$ will likely be near
$p|F|$, and the number of vectors $x'\in V$ lying within $X^k$ is unlikely
to be much larger than $p^{|J|}|F|^{|J|+\alpha-k}$. Since $m_F(A)=
(|J|-1)/(|J|-1+\alpha-k)$, this number is much smaller than~$|X|\approx
p|F|$, and by removing from $X$ an element of each such~$x'$, we obtain a
subset $X^*\subset X$, with $|X^*|$ close to~$|X|$, such that $X^*$
contains no solution $x'$ with $B_Jx'\in W$ and so $X^*$ is solution-free
for the system $Ax=0$.

After all these preliminaries, we now state the main theorem on linear
systems.

\begin{thm}\label{thm:eqn_cover}
  Let $(F,A,b,Z)$ be a $k\times r$ linear system with $A$ abundant and
  $|Z|\le|F|^{r-k}/2$. Given $\epsilon>0$ there is a constant $c$,
  depending on $A,\epsilon$ in the case $F=[N]$, and depending only on
  $k,r,\epsilon$ otherwise, such that if $|F|\ge c$ then there exists $\C
  \subset \mathcal{P}F$ satisfying
\begin{itemize}
 \item[(a)]
   for every solution-free subset $I\subset F$ there exists 
   $T\subset I$ such that $I\subset  C=C(T)\in\C$, and  $|T| \le c
   |F|^{1-1/m_F(A)}$,
 \item[(b)] for every $C \in \C$, the number of  solutions to $Ax = b$
   with $x \in C^r-Z$ is at most $\epsilon |F|^{r-k}$,
 \item[(c)] $\log |\C| \leq c |F|^{1-1/m_F(A)} \log |F|$.
\end{itemize}
\end{thm}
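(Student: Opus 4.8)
The plan is to encode solutions as the edges of an $r$-graph $G$ on vertex set $F$ and then apply Corollary~\ref{cor:sparse_container}. Let $V(G)=F$, and let the edges of $G$ be those $r$-element subsets $e\subset F$ some ordering of which is a vector $x\in F^r-Z$ with $Ax=b$ (such an $e$ has $r$ distinct entries, so it really is an $r$-set). If $I\subset F$ is solution-free then no edge of $G$ lies inside $I$, so $I$ is independent in $G$; hence any container family for the independent sets of $G$ also covers the solution-free sets. The strategy is then: bound the average degree $d$ of $G$ and the subset degrees $d(\sigma)$ in terms of $m_F(A)$, choose $\tau$ of order $|F|^{-1/m_F(A)}$ together with a small auxiliary $\epsilon'$, invoke Corollary~\ref{cor:sparse_container}, and finally convert its three conclusions into (a), (b) and (c).

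The degree computation is the heart of the matter. Write $n=|F|$. Since $A$ has full rank there are $\Theta(n^{r-k})$ solutions of $Ax=b$ in $F^r$ (exactly $n^{r-k}$ when $F$ is a finite field or an abelian group, and $\Theta_A(n^{r-k})$ when $F=[N]$), and by abundance the number of these with two equal coordinates is $O(n^{r-k-1})$: fix the repeated value, merge the two corresponding columns of $A$ — which is still a full rank matrix, by abundance — and count via Fact~\ref{fact:eqn_deg_codeg}. As $|Z|\le n^{r-k}/2$, a positive proportion of all solutions are non-degenerate and lie outside $Z$, and since each edge of $G$ carries between $1$ and $r!$ such solutions we get $e(G)=\Theta(n^{r-k})$ and hence $d=\Theta(n^{r-k-1})$. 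For a set $\sigma$ with $|\sigma|=j\ge2$, every edge through $\sigma$ comes from a solution $j$ of whose coordinates are pinned (in some order) to the values of $\sigma$; there are at most $r^j$ choices of which coordinates, and once these are fixed Fact~\ref{fact:eqn_deg_codeg} bounds the number of solutions of the reduced system $A_Jx''=b'$ by $n^{\,r-j-\mbox{rank}(A_J)}$, with $A_J$ the submatrix appearing in Definition~\ref{def:mA}. The defining property of $m_F(A)$ is exactly that $(|J|-1)/(|J|-1+\mbox{rank}(A_J)-k)\le m_F(A)$ for every such $J$, i.e.\ $\mbox{rank}(A_J)\ge k-(j-1)+(j-1)/m_F(A)$; feeding this in gives $d(\sigma)\le r^j n^{\,r-k-1-(j-1)/m_F(A)}$. (When $F$ is an abelian group the rank of an integer matrix over $F$ is not meaningful, and one uses the second clause of Definition~\ref{def:mA} together with the corresponding form of Fact~\ref{fact:eqn_deg_codeg}; the bound is the same.)

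With these estimates I would apply Corollary~\ref{cor:sparse_container} to $G$, with $\epsilon$ there replaced by $\epsilon'=\epsilon/(2\,r!)$ and with $\tau=Cn^{-1/m_F(A)}$, where $C=C(r,\epsilon)$ (or $C(A,\epsilon)$ when $F=[N]$) is chosen large enough that $r^j n^{\,r-k-1-(j-1)/m_F(A)}\le c_0\,d\,\tau^{j-1}$ for $2\le j\le r$; here $c_0$ is the constant produced by the corollary, and we use $d=\Theta(n^{r-k-1})$. For $n\ge c$ we also have $\tau\le1$. The corollary yields a function $C:\mathcal{P}F\to\mathcal{P}F$; set $\C=\{C(T):T\subset F,\ |T|\le\tau n\}$. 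Given a solution-free $I$, it is independent in $G$, so there is $T\subset I$ with $I\subset C(T)\in\C$ and $|T|\le\tau n=Cn^{1-1/m_F(A)}$, which is (a). For (b), $e(G[C])\le\epsilon' e(G)\le\epsilon' n^{r-k}$, and the number of $x\in C^r-Z$ with $Ax=b$ is at most $r!\,e(G[C])$ (the non-degenerate ones, each an edge of $G[C]$) plus the $O(n^{r-k-1})$ degenerate ones, which is $\le\epsilon n^{r-k}$ once $n\ge c$. For (c), $|\C|\le\sum_{t\le\tau n}\binom{n}{t}\le(\tau n+1)(e/\tau)^{\tau n}$, so $\log|\C|\le\tau n\log(e/\tau)+O(\tau n)\le c\,n^{1-1/m_F(A)}\log n$ for large $n$, using $\log(1/\tau)\le\log n$. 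Taking $c$ large enough to dominate $C$ and every ``$n\ge c$'' threshold finishes the proof.

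The step I expect to be the main obstacle is the subset-degree bound in terms of $m_F(A)$. One must check that pinning $j$ coordinates genuinely yields the submatrix $A_J$ of Definition~\ref{def:mA}, that the rank inequality extracted from the definition is precisely the one needed, and — the delicate part — that the resulting bound holds uniformly across the three ambient settings ($F$ a finite field, $F=[N]$, or $F$ an abelian group), whose definitions of $m_F(A)$ differ and whose raw solution counts (Fact~\ref{fact:eqn_deg_codeg}) take correspondingly different forms. A secondary but genuine point is the lower bound $e(G)=\Omega(n^{r-k})$: this is where abundance is actually used, since it is what forces the degenerate solutions together with the discounted set $Z$ to miss all but a constant fraction of the solutions.
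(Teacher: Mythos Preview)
Your approach is correct and follows the same overall plan as the paper --- encode solutions in an $r$-graph and apply Corollary~\ref{cor:sparse_container} with $\tau$ of order $|F|^{-1/m_F(A)}$ --- but differs in one structural choice. The paper works with the $r$-\emph{partite} hypergraph $G(F,A,b,Z)$ on $r$ disjoint copies of $F$ (Definition~\ref{def:GFAb}), whose edges are literally the solution vectors $x\in F^r-Z$; it then obtains containers in $F$ by intersecting the $r$ coordinate projections of each container in the partite graph, using Remark~\ref{rem:enlarge} to recover the single set $T$. You instead build $G$ directly on $F$, with edges the $r$-subsets admitting a solution ordering. The partite device buys cleanliness: every solution becomes an edge regardless of repeated coordinates, so the degree bound (Lemma~\ref{lem:eqn_delta}) needs no separate treatment of degenerate solutions, and condition~(b) follows without the $r!$ factor. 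Your route is more direct --- no projection step --- but pays with the degenerate-solution count (where you invoke abundance a second time) and the $r!$ bookkeeping. The paper also handles $F=[N]$ by embedding into $\mathbb{Z}_p$ for a prime $p\asymp_A N$, so that Fact~\ref{fact:eqn_deg_codeg} applies verbatim; your direct treatment of $[N]$ is workable but, as you note, would need its own form of the solution-count bound since Fact~\ref{fact:eqn_deg_codeg} is stated only for fields and abelian groups.
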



The theorem is just a straightforward consequence of
Corollary~\ref{cor:sparse_container}. It is necessary only to construct a
suitable hypergraph that encodes solutions to the linear system, and then
to check its parameters so that the corollary can be applied. The
hypergraph in question is the following.

\begin{defn}\label{def:GFAb}
  Let $(F,A,b,Z)$ be a $k\times r$ linear system.  The $r$-partite
  $r$-graph $G=G(F,A,b,Z)$ has vertex set $V(G) = X_1 \cup \cdots \cup
  X_r$, where the $X_i$s are disjoint copies of $F$, and edge set $E(G) =
  \{x=(x_1, \ldots, x_r) \in X_1 \times \cdots \times X_r - Z : Ax = b\}$.
\end{defn}

In order to apply Corollary~\ref{cor:sparse_container} to $G(F,A,b,Z)$ we
need to estimate the quantities $d(\sigma)$, which we now proceed to do.

\begin{fact}\label{fact:eqn_deg_codeg}
  Let $F$ be a finite field or abelian group, let $A$ be a $k \times \ell$
  matrix and let $b \in F^k$. If $A$ has full rank then there are
  $|F|^{\ell-k}$ solutions to $Ax=b$. More generally if $F$ is a finite
  field, there are at most $|F|^{\ell-{\rm rank}(A)}$ solutions to $Ax=b$.
\end{fact}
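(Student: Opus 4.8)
The plan is to treat the two assertions separately, each by a direct linear-algebra argument, distinguishing the field case from the abelian-group case only where necessary. First I would dispose of the full-rank claim. Suppose $A$ has full rank, so that by definition the map $x\mapsto Ax$ from $F^\ell$ to $F^k$ is surjective. Fix a solution $x_0$ with $Ax_0=b$ (one exists by surjectivity). Then the solution set is the coset $x_0+\ker A$, so it suffices to count $|\ker A|$. When $F$ is a field this is immediate: $\ker A$ is a subspace of dimension $\ell-k$, hence has exactly $|F|^{\ell-k}$ elements. When $F$ is an abelian group and $A$ is an integer matrix, one argues instead that $\ker A\to F^\ell$ composed with a suitable projection shows $|\ker A|=|F|^{\ell-k}$; concretely, after an integer row/column reduction (Smith normal form) of $A$ one sees that full rank forces the relevant elementary divisors to be units in the sense that multiplication by them is a bijection of $F$, so $\ker A$ is isomorphic to $F^{\ell-k}$. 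I would state this briefly, since the full-rank hypothesis is exactly what rules out torsion obstructions.

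For the second assertion I would restrict to $F$ a finite field, where rank is well-defined. Let $\rho=\mathrm{rank}(A)$. If $b\notin\langle\text{columns of }A\rangle$ there are no solutions and the bound $|F|^{\ell-\rho}$ holds vacuously. Otherwise pick any solution $x_0$, and again the solution set is $x_0+\ker A$, a coset of a subspace of dimension $\ell-\rho$ by the rank–nullity theorem, giving exactly $|F|^{\ell-\rho}$ solutions — in particular at most that many, as claimed.

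There is essentially no hard step here; the only point requiring a little care is the abelian-group half of the first assertion, since ``rank'' and ``dimension'' are not available and one must see why full rank (in the sense defined in the paper: $x\mapsto Ax$ surjective onto $F^k$) still pins down $|\ker A|=|F|^{\ell-k}$ exactly rather than merely bounding it. This is handled by the Smith normal form observation above, or equivalently by noting that surjectivity of $A:F^\ell\to F^k$ together with $|F^\ell|=|F|^\ell$ and $|F^k|=|F|^k$ forces every fibre to have size exactly $|F|^{\ell-k}$ by counting. I would phrase it via this counting argument, as it avoids invoking Smith normal form and works uniformly for any finite $F$.
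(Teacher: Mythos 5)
Your final argument (the counting one, which you say you would prefer) is essentially the paper's own proof: fix one solution, observe the solution set is a coset of $\ker A$ so all fibres of $x\mapsto Ax$ have equal size, use full rank for surjectivity, and divide $|F|^\ell$ by $|F|^k$. The Smith-normal-form detour you mention is unnecessary and you correctly discard it; the rank--nullity argument for the finite-field case matches what the paper calls ``standard.''
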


\begin{proof}
  If $A$ has full rank, then for every $b_1, b_2 \in F^k$ there exists $x
  \in F^\ell$ with $A x = b_2 - b_1$.  Thus if $x_1$ is a solution to $A
  x_1 = b_1$ then $A (x_1 + x) = b_2$, so by symmetry every $b \in F^k$ has
  $|F|^\ell / |F|^k$ solutions to $A x = b$.  The case when $F$ is a finite
  field is standard.
\end{proof}

\begin{lem}\label{lem:eqn_delta}
  Let $(F,A,b,Z)$ be a $k\times r$ linear system where $F$ is a finite
  field or an abelian group, $A$ is an abundant matrix and
  $|Z|\le|F|^{r-k}/2$.  Let $G=G(F,A,b,Z)$, $\gamma\le1$ and $\tau =
  |F|^{-1/m_F(A)}/\gamma$.  Then $d(\sigma) \le 2\gamma d\tau^{|\sigma|-1}$
  holds for every $\sigma\subset V(G)$ with $2\le|\sigma|\le r$, where $d$
  is the average degree of~$G$.
\end{lem}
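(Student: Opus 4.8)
The plan is to first pin down the average degree and then reduce the claim to a clean, $\gamma$-free bound on $d(\sigma)$. Since $A$ has full rank, Fact~\ref{fact:eqn_deg_codeg} gives exactly $|F|^{r-k}$ solutions to $Ax=b$ in $F^r$, so deleting $Z$ leaves $e(G)$ between $|F|^{r-k}/2$ and $|F|^{r-k}$, using $|Z|\le|F|^{r-k}/2$. As $G$ is $r$-uniform on $r|F|$ vertices, $d=e(G)/|F|\ge|F|^{r-k-1}/2$. Writing $s=|\sigma|$ and substituting $\tau=|F|^{-1/m_F(A)}/\gamma$, one checks
$$2\gamma d\tau^{s-1}\ \ge\ \gamma^{2-s}\,|F|^{\,r-k-1-(s-1)/m_F(A)}\ \ge\ |F|^{\,r-k-1-(s-1)/m_F(A)}\,,$$
the last inequality because $\gamma\le1$ and $s\ge2$. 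So it suffices to prove $d(\sigma)\le|F|^{\,r-k-1-(s-1)/m_F(A)}$ for every $\sigma$ with $2\le s\le r$.

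Next I would reduce to counting solutions of a subsystem. Since $G$ is $r$-partite, $d(\sigma)=0$ unless $\sigma$ is a partial transversal, so assume $\sigma$ meets the parts indexed by $J\subseteq[r]$ with $|J|=s$, prescribing values $a_j$ ($j\in J$). Split $A=[\,B_J\mid A_J\,]$ into the columns indexed by $J$ and the rest; then $d(\sigma)$ is at most the number of $x''\in F^{\,r-s}$ with $A_Jx''=b-B_Ja$. When $F$ is a finite field, Fact~\ref{fact:eqn_deg_codeg} bounds this by $|F|^{\,(r-s)-\mathrm{rank}(A_J)}$, and $J$ is one of the sets over which the maximum defining $m_F(A)$ is taken, with $s-1+\mathrm{rank}(A_J)-k>0$ by abundance (as noted after Definition~\ref{def:mA}); hence $(s-1)/m_F(A)\le s-1+\mathrm{rank}(A_J)-k$, which rearranges to exactly $|F|^{\,(r-s)-\mathrm{rank}(A_J)}\le|F|^{\,r-k-1-(s-1)/m_F(A)}$, settling the field case.

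For $F$ an abelian group the only extra work is that Fact~\ref{fact:eqn_deg_codeg} counts solutions only for full-rank systems. Here $t\ge2$ (abundance again) and $m_F(A)=(k+t-1)/(t-1)$. If $s\le t$ then $A_J$ has full rank, giving $d(\sigma)\le|F|^{\,r-s-k}$; if $s>t$, fix a $t$-subset $J^*\subseteq J$ with $A_{J^*}$ of full rank and combine $d(\sigma)\le|F|^{\,(r-t)-k}$ with the trivial bound $d(\sigma)\le|F|^{\,r-s}$. Thus in all cases $d(\sigma)\le\min\{|F|^{\,r-s},\,|F|^{\,r-k-\min(s,t)}\}$, and comparing exponents with $r-k-1-(s-1)/m_F(A)$ reduces, after clearing the positive factor $t-1$, to $m_F(A)\ge1$ (for $s\le t$), to $s\le k+t$ (for $t<s\le k+t$, using the $\min(s,t)$-bound), and to the identity $(s-k-1)(k+t-1)-(s-1)(t-1)=k(s-k-t)\ge0$ (for $s>k+t$, using the trivial bound); all three hold, finishing the proof.

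The main obstacle is precisely this abelian-group case: there is no clean formula for the number of solutions of a non-full-rank homomorphism between finite abelian groups, so one has to feed in the two crude bounds ($|F|^{\,r-s}$ and the full-rank bound on a $t$-element subset of $J$) and verify that the definition $m_F(A)=(k+t-1)/(t-1)$ is calibrated exactly so the larger of them is still small enough. Keeping track of the degenerate $d(\sigma)=0$ case for non-transversal $\sigma$, and of the positivity of the denominators in $m_F(A)$ (which is the one place abundance is really used), are minor but easy to overlook.
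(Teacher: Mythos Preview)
Your argument is correct and follows essentially the same route as the paper's proof: bound $d$ from below via Fact~\ref{fact:eqn_deg_codeg} and $|Z|\le|F|^{r-k}/2$, dispose of non-transversal $\sigma$, reduce $d(\sigma)$ to counting solutions of $A_Jx^*=b^*$, and then split into the field case (rank bound) and the abelian-group case (three ranges $s\le t$, $t<s\le t+k$, $s>t+k$ using respectively the full-rank bound, monotonicity via a $t$-subset, and the trivial bound). The only organisational difference is that you first reduce to the clean target $d(\sigma)\le|F|^{r-k-1-(s-1)/m_F(A)}$ before the case analysis, whereas the paper carries the factor $2\gamma$ and computes the ratio $d(\sigma)/(d\tau^{s-1})$ directly in each case; the inequalities checked are the same.
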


\begin{proof}
  The number of edges in $G$ is the number of solutions to $Ax = b$ not in
  $Z$.  The matrix $A$ has full rank, so by Fact~\ref{fact:eqn_deg_codeg}
  the number of edges of $G$ is $|F|^{r-k}-|Z|\ge|F|^{r-k}/2$. Since $G$
  has $r|F|$ vertices, its average degree $d$ satisfies $d\ge
  |F|^{r-k-1}/2$.

  Let $\sigma\subset V(G)$ where $2\le|\sigma|\le r$. Put $j=|\sigma|$. If
  $\sigma$ contains two vertices in the same part $X_i$ then there are no
  edges containing $\sigma$.  Otherwise, we may suppose that $\sigma=\{y_1,
  \ldots, y_j\}$, where $y_\ell \in X_{i_\ell}$ for $\ell=1,\ldots,j$.
  Let $J=\{i_1,\ldots,i_j\}$. Then
  $d(\sigma)$ is at most the number of solutions to $Ax=b$ with $x_{i_\ell}
  = y_\ell$ for $\ell=1,\ldots,j$, and there is some $b^*\in F^k$ for which
  this is the number of solutions $x^* \in F^{r-j}$ to $A_Jx^*=b^*$.  We
  now split the proof into two cases depending on whether $F$ is a finite
  field or an abelian group.

  {\sl When $F$ is an abelian group:} Recall Definition~\ref{def:mA}, and
  in particular that $m_F(A)= (k+t-1)/(t-1)$. Write $f=|F|^{-1/m_F(A)}$, so
  $\tau=f/\gamma$. Note that $|F|f\ge1$. If $j\le t$ then $A_J$ has full
  rank by assumption, and so Fact~\ref{fact:eqn_deg_codeg} implies the
  number of solutions is at most $|F|^{r-j-k}$. Hence for $2 \le j \le t$ we
  have
  \[
  \frac{d(\sigma)}{d\tau^{|\sigma|-1}}
  \le 2|F|^{r-j-k - (r-k-1)} \tau^{1-j}= 2|F|^{1-j}\tau^{1-j}
  = 2\gamma^{j-1} (|F|f)^{1-j}\le 2\gamma\,,
  \]
  since $\gamma\le1$ and $|F|f\ge1$.
  When $t+1 \le j \le t+k$, we can say $d(\sigma)\le d(\sigma')$ for some
  $\sigma'\subset\sigma$ with $|\sigma'|=t$, so
  \[
  \frac{d(\sigma)}{d\tau^{|\sigma|-1}} \le 
  \frac{d(\sigma')}{d\tau^{j-1}}
  \le 2|F|^{1-t}\tau^{1-j}\le 2\gamma|F|^{1-t}f^{1-j}\le
  2\gamma|F|^{1-t}f^{1-t-k}= 2\gamma\,,
  \]
  here using $\gamma\le1$ and the definition of~$f$.  When $t+k< j \le
  r$, the crude bound $d(\sigma)\le |F|^{r-j}$ (recall we
  are counting solutions $x^* \in F^{r-j}$) is enough. Using $\gamma\le1$
  and $|F|f\ge1$ we have
  \[
  \frac{d(\sigma)}{d\tau^{|\sigma|-1}} \le 
  2|F|^{r-j-(r-k-1)}\tau^{1-j} \le 2\gamma|F|^k (|F|f)^{1-j} 
  < 2\gamma|F|^k (|F|f)^{1-t-k}=2\gamma\,.
  \]
  Therefore $d(\sigma)/d\tau^{|\sigma|-1}\le
  2\gamma$ for all $j$, as claimed.

  {\sl When $F$ is a finite field:} By Fact~\ref{fact:eqn_deg_codeg} the
  number of solutions to $A_Jx^*=b^*$ is at most
  $|F|^{r-j-{\rm rank}(A_J)}$. Hence
  \[
  d(\sigma) \le \max_{J \subset [r],\, |J|=j} |F|^{r-j-{\rm rank}(A_J)}.
  \]
  Using $\tau=\gamma^{-1}|F|^{-1/m_F(A)}$ and $\gamma\le1$, this implies
  that
  \[
  \frac{d(\sigma)}{d\tau^{|\sigma|-1}} \le 2\gamma\max_{J \subset [r],\, |J|=j}
  |F|^{1-j+k-{\rm rank}(A_J)+(j-1)/m_F(A)}.
  \]
  The exponent is at most $0$ by definition of $m_F(A)$, so
  $d(\sigma)/d\tau^{|\sigma|-1}\le 2\gamma$.
\end{proof}

\begin{proof}[Proof of Theorem~\ref{thm:eqn_cover}]
  We may assume that $F$ is a finite field or abelian group.  Indeed, $[N]$
  can be embedded into the finite field $\mathbb{Z}_p$ for a sufficiently
  large prime~$p$. Taking $p$ in the range $4k!|A|^k N \le p \le 8k!|A|^k
  N$, where $|A|$ is the sum of the absolute values of the entries of $A$,
  guarantees that $A$ is still abundant in $\mathbb{Z}_p$ and that a
  solution to $Ax=b\mbox{ (mod $p$)}$ is also a solution to $Ax=b$
  (provided, say, $|b_i| \le p/2$; but we may assume this since otherwise
  there are no solutions to $Ax=b$ in $[N]$).  Then the result of this
  theorem for $(\mathbb{Z}_p,A,b,Z)$ implies the result for $([N],A,b,Z)$,
  since $p/N$ is bounded by a constant depending only on $A$.

  Let $c'=c(r,\epsilon)$ be the constant asserted by
  Corollary~\ref{cor:sparse_container}. Choose $\gamma\le1$ so that
  $2\gamma\le c'$, and put $c=\max\{(1/\gamma)^k,2r/\gamma\}$. We claim
  this~$c$ works in the theorem.

  To see this, apply the corollary to the $r$-graph $G = G(F,A,b,Z)$, with
  $\tau=|F|^{-1/m_F(A)} / \gamma$. If $|F|\ge c$ then $|F|\ge
  (1/\gamma)^{m_F(A)}$ and so $\tau\le 1$. By Lemma~\ref{lem:eqn_delta},
  the requirements of Corollary~\ref{cor:sparse_container} are then
  satisfied.  So we obtain a collection of sets $\mathcal{D}$ covering the
  independent sets of $G$.  For $D \in \mathcal{D}$, let $\pi_i(D) = D\cap
  X_i \subset F$ be the part of $D$ in the $i$th copy of $F$ and let
  \[
  \C = \{ C_D : D \in \mathcal{D} \} \subset \mathcal{P}F \qquad
  \mbox{where } C_D=\pi_1(D) \cap \cdots \cap \pi_r(D).
  \]
  We claim that $\C$ satisfies the conditions of the theorem.

  Condition (a): consider a solution-free set $I \subset F$.  The subset
  $J$ of $V(G)$ formed by taking a copy of $I$ in each $X_i$ is an
  independent set in $G$.  In particular, it is contained in some $D \in
  \mathcal{D}$, hence $I \subset C_D$. Moreover, there exists a set
  $T'\subset J$, such that $D=D(T')$ and $|T'| \le \tau |G|$. Let
  $T=\pi_1(T')\cup\cdots\cup\pi_r(T')$; then $|T|\le|T'|\le (r/\gamma)
  |F|^{1-1/m_F(A)}<c|F|^{1-1/m_F(A)}$. Now let $S'$ be the
  subset of $V(G)$ formed by taking a copy of $T$ in each~$X_i$; clearly
  $T$ determines $S'$. By
  definition of $J$ we have $T\subset I$ and $T'\subset S'\subset J$. By
  Remark~\ref{rem:enlarge}, which describes the iterative process leading
  from Theorem~\ref{thm:coverweak} to Corollary~\ref{cor:sparse_container}, we
  know that $D(S')=D(T')=D$, and therefore $T$ determines $C_D$. This
  verifies condition~(a).

  Condition (b): consider $C\in\C$. Each solution to $Ax = b$ with $x \in
  C^r - Z$ corresponds to an edge of $G[D]$, of which there are at most
  $\epsilon e(G)=\epsilon |F|^{r-k}$.

  Condition (c): putting $q=(r/\gamma) |F|^{1-1/m_F(A)}$, so $|T|\le q$,
  we have $|\C|\le \sum_{t\le q}|F|^t\le (q+1)|F|^q<|F|^{2q}$. Thus $\log
  |\C| \le 2q\log|F|\le c |F|^{1-1/m_F(A)} \log |F|$, completing the proof.
\end{proof}

\subsection{Supersaturation}\label{subsec:supersat}

Condition~(b) of Theorem~\ref{thm:eqn_cover} provides containers $C$ that
contain few solutions. Our applications require a bound on $|C|$ itself. We
obtain such a bound from condition~(b) via the notion of supersaturation,
as given in Definition~\ref{defn:supersat}. The name is taken from the
supersaturation theorem of Erd\H{o}s and Simonovits~\cite{ES}, which proves
a similar property for hypergraphs and other discrete structures by a
simple averaging argument.

For some linear systems $(F,A,b)$ it is possible to prove supersaturation
by an averaging argument of this kind. For example, consider arithmetic
progressions of length~$\ell$ in~$F=[N]$; these are solutions to $Ax=0$ for
some $((\ell-2)\times\ell)$-matrix~$A$. Szem\'eredi's theorem~\cite{Sz}
shows that $\mbox{ex}(F,A,0)=o(|F|)$, from which Varnavides~\cite{Va}
derived (for $l=3$, but it works in general) that $|X|<\epsilon N$ if
$X\subset [N]$ contains fewer than $\delta(\epsilon)N^2$ arithmetic
progressions. That is, $(F,A,0)$ is $f$-supersaturated for some null~$f$
not depending on~$F$.

Such a simple averaging argument does not usually work, and we
might then turn to a removal lemma. This is stronger than the
supersaturation condition: it says that if $X\subset F$ contains at most
$\eta|F|^{r-k}$ solutions to $Ax=b$ then there is a subset $X'\subset X$,
$|X'|<\epsilon |F|$, such that $X-X'$ is solution-free. The archetype for
such lemmas is the Triangle Removal Lemma of Ruzsa and
Szemer\'edi~\cite{RS}.

Green~\cite{G} proved a removal lemma for single linear equations over
abelian groups. He conjectured a similar lemma for systems over a finite
field, which was proved by Shapira~\cite{Sh} and by Kr\'al', Serra and
Vena~\cite{KSV1}. These proofs use removal lemmas for hypergraphs such as
those of Austin and Tao~\cite{AT}, Gowers~\cite{Go}, Nagle, R\"odl and
Schacht~\cite{NRS} and Tao~\cite{T}. In fact, Szegedy~\cite{Szeg} pointed
out that, subject to certain symmetry conditions, hypergraph removal lemmas
can lead directly to algebraic removal lemmas.

Kr\'al', Serra and Vena~\cite{KSV2} also give a version for systems over
abelian groups. The statement involves the \emph{determinantal} of a
$k\times r$ integer matrix, which is the greatest common divisor of the
determinants of its $k\times k$ submatrices; note that if~$A$ has
determinantal coprime to~$|F|$ then in particular~$A$ has full rank. We do
not quote the removal lemma exactly, but rather its consequence for
supersaturation.

\begin{prop}[Kr\'al', Serra and
  Vena~\cite{KSV1,KSV2}]\label{prop:eqn_removal}
  Let $k,r\in\mathbb{N}$. Then there is a null function
  $f:\mathbb{R}^+\to\mathbb{R}^+$ such that, if $(F,A,b)$ is a $k \times
  r$ linear system where $F$ is a finite field or abelian group and~$A$ has
  full rank, and if further~$A$ has determinantal coprime to~$|F|$ in the
  case that~$F$ is an abelian group, then $(F,A,b)$ is $f$-supersaturated.
\end{prop}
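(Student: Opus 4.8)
The plan is to read off supersaturation directly from the algebraic removal lemma; all the substance is in that lemma, which we are free to quote, so the argument is short. First recall the removal lemma in the form discussed in \S\ref{subsec:supersat}: for every $\epsilon>0$ there is a $\delta=\delta(k,r,\epsilon)>0$, \emph{not depending on the particular system}, such that whenever $(F,A,b)$ is a $k\times r$ linear system with $F$ a finite field or abelian group and $A$ of full rank (and, in the abelian-group case, with determinantal coprime to $|F|$), and $X\subset F$ contains at most $\delta|F|^{r-k}$ solutions to $Ax=b$, then there is $X'\subset X$ with $|X'|<\epsilon|F|$ for which $X\setminus X'$ is solution-free. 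For finite fields this is the content of \cite{Sh,KSV1}, for abelian groups under the stated determinantal hypothesis (which in particular forces full rank) it is \cite{KSV2}, and the single-equation case over a group is already in \cite{G}; taking for each $\epsilon$ the minimum of the field and group values of $\delta$ gives one function $\delta(k,r,\epsilon)$ covering both cases.

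Next I would package this into a single null function. Since each $\delta(k,r,\epsilon)$ is positive, set
$$
f(\eta)\;=\;\min\Bigl\{\,1,\ \inf\{\epsilon>0:\delta(k,r,\epsilon)\ge\eta\}\,\Bigr\}\,.
$$
Given $\eta>0$ and a set $X\subset F$ containing at most $\eta|F|^{r-k}$ solutions to $Ax=b$: for any $\epsilon$ with $\delta(k,r,\epsilon)\ge\eta$ we have $\eta|F|^{r-k}\le\delta(k,r,\epsilon)|F|^{r-k}$, so the removal lemma applies and yields $X'$ with $|X'|<\epsilon|F|$ and $X\setminus X'$ solution-free, whence $|X|<|X\setminus X'|+\epsilon|F|\le\mbox{ex}(F,A,b)+\epsilon|F|$. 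Taking the infimum over such $\epsilon$, and using the trivial bound $|X|\le|F|\le\mbox{ex}(F,A,b)+|F|$ to handle the cap at $1$, gives $|X|\le\mbox{ex}(F,A,b)+f(\eta)|F|$ — precisely the assertion that $(F,A,b)$ is $f$-supersaturated. Finally $f$ is null: fixing any $\epsilon_0>0$ we have $\delta(k,r,\epsilon_0)>0$, so $f(\eta)\le\epsilon_0$ whenever $\eta<\delta(k,r,\epsilon_0)$, and hence $f(\eta)\to0$ as $\eta\to0$.

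The only point needing care — and it is bookkeeping about the literature rather than a proof step — is the uniformity of $\delta$ over all $k\times r$ systems, which is exactly what the cited removal lemmas provide, together with the observation that the determinantal hypothesis in the abelian-group case is precisely the condition under which \cite{KSV2} is stated. There is no genuine analytic or combinatorial obstacle beyond this; in particular nothing here uses abundance of $A$, only full rank, consistently with the proposition's hypotheses.
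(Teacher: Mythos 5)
Your argument is correct and is exactly the derivation the paper intends but elides: the paper states Proposition~\ref{prop:eqn_removal} as "the consequence for supersaturation" of the Kr\'al'--Serra--Vena removal lemmas without writing out the quantifier manipulation, and you have supplied that manipulation. The definition $f(\eta)=\min\{1,\inf\{\epsilon:\delta(k,r,\epsilon)\ge\eta\}\}$ does the job, the verification that $f$ is null is right, and you correctly note that only full rank (not abundance) is used and that the determinantal hypothesis is what \cite{KSV2} requires in the group case. No gap; this matches the paper's route.
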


We thus have a wide class of $f$-supersaturated linear systems where $f$
depends only on $k$ and $r$.

\subsection{A couple of applications}

We begin with a strengthened version of Theorem~\ref{thm:eqn_count} which
takes into account a set $Z$ of discounted solutions.

\begin{thm}\label{thm:eqn_countz}
  Let $k,r\in\mathbb{N}$ and let $f:\mathbb{R}^+\to\mathbb{R}^+$ be null.
  Let $(F,A,b,Z)$ be a $k \times r$ linear system with $A$ abundant and
  $(F,A,b)$ $f$-supersaturated. Given $\epsilon>0$, there exists
  $c=c(k,r,f,\epsilon)$ (or $c=c(A,f,\epsilon)$ in the case $F=[N]$) and
  $\eta=\eta(f,\epsilon)>0$, such that, if $|F|>c$ and $|Z|<\eta|F|^{r-k}$,
  then the number of solution-free subsets of $F$ is
  $2^{{\rm ex}(F,A,b)+\lambda|F|}$, where $0\le\lambda<\epsilon$.
\end{thm}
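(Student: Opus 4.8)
The plan is to deduce the theorem directly from the container Theorem~\ref{thm:eqn_cover} together with the supersaturation hypothesis, in the now-standard way. The lower bound $\lambda\ge0$ is immediate: if $M$ is a solution-free subset of $F$ with $|M|=\mathrm{ex}(F,A,b)$, then each of its $2^{|M|}$ subsets $I$ is solution-free as well, since a vector $x\in I^r-Z$ with $Ax=b$ would also witness $x\in M^r-Z$. So there are at least $2^{\mathrm{ex}(F,A,b)}$ solution-free sets.

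For the upper bound I would first fix the constants. Since $(F,A,b)$ is $f$-supersaturated with $f$ null, choose $\eta_0=\eta_0(f,\epsilon)>0$ small enough that $f(\eta_0)<\epsilon/2$, set $\eta=\eta_0/2$ (so $\eta=\eta(f,\epsilon)$, and we may assume $\eta_0<1$ so that $|Z|<\eta|F|^{r-k}<|F|^{r-k}/2$, meeting the hypothesis of Theorem~\ref{thm:eqn_cover}), and apply Theorem~\ref{thm:eqn_cover} with its parameter $\epsilon$ replaced by $\epsilon':=\eta_0/2$. This supplies a constant $c_0$ (depending on $k,r,\epsilon'$, hence on $k,r,f,\epsilon$, or on $A,f,\epsilon$ when $F=[N]$) and, for $|F|>c_0$, a collection $\C\subset\mathcal{P}F$ such that every solution-free $I\subset F$ lies in some $C\in\C$, each $C\in\C$ contains at most $\epsilon'|F|^{r-k}$ solutions $x\in C^r-Z$ to $Ax=b$, and $\log|\C|\le c_0|F|^{1-1/m_F(A)}\log|F|$.

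Next I would bound $|C|$. The number of solutions $x\in C^r$ to $Ax=b$ is at most $\epsilon'|F|^{r-k}+|C^r\cap Z|\le(\epsilon'+\eta)|F|^{r-k}\le\eta_0|F|^{r-k}$, so $f$-supersaturation (applied with $\eta_0$ in place of $\eta$) gives $|C|\le\mathrm{ex}(F,A,b)+f(\eta_0)|F|<\mathrm{ex}(F,A,b)+(\epsilon/2)|F|$. Since every solution-free set is a subset of some $C\in\C$, the number of solution-free subsets of $F$ is at most $|\C|\,2^{\mathrm{ex}(F,A,b)+(\epsilon/2)|F|}$. Finally, because $m_F(A)\ge1$ we have $1-1/m_F(A)<1$, so $c_0|F|^{1-1/m_F(A)}\log|F|<(\epsilon/2)|F|$ once $|F|$ exceeds some threshold depending on $c_0$ and $\epsilon$; taking $c$ to be the larger of $c_0$ and this threshold, we conclude that for $|F|>c$ the count is at most $2^{\mathrm{ex}(F,A,b)+\epsilon|F|}$, as required.

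I do not expect a genuine obstacle here; the only point needing care is the bookkeeping that reconciles the two notions of ``solution''. Theorem~\ref{thm:eqn_cover}(b) bounds the solutions lying in $C^r-Z$, whereas Definition~\ref{defn:supersat} of supersaturation counts \emph{all} solutions to $Ax=b$ with coordinates in the set; the discrepancy is exactly $|C^r\cap Z|\le|Z|$, which is why the extra hypothesis $|Z|<\eta|F|^{r-k}$ is imposed and why $\eta$ must be chosen in tandem with $\eta_0$ and with the container parameter $\epsilon'$. Everything else is the routine combination of the bound $\log|\C|=o(|F|)$ with the supersaturation estimate on $|C|$.
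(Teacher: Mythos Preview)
Your proof is correct and follows essentially the same route as the paper's: pick the supersaturation parameter so that $f$ of it is below $\epsilon/2$, apply Theorem~\ref{thm:eqn_cover} with that parameter in place of its~$\epsilon$, add back the at most $|Z|$ discounted solutions to bound $|C|$, and use $1/m_F(A)>0$ to get $\log|\C|=o(|F|)$. The only cosmetic difference is that the paper writes $f(2\eta)<\epsilon/2$ directly rather than introducing a separate $\eta_0=2\eta$, and is explicit about the factor $\log 2$ when converting the bound on $\log|\C|$ into a bound on $|\C|$ in base~$2$.
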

\begin{proof}
There is a set of size ${\rm ex}(F,A,b)$ containing no solution to the
system $(F,A,b)$ and therefore certainly no solution to the system
$(F,A,b,Z)$. Every subset of this set is a solution-free subset for the
system $(F,A,b,Z)$, so we obtain $2^{{\rm ex}(F,A,b)}$ such subsets. This
proves $\lambda\ge0$.

To obtain the upper bound $\lambda<\epsilon$, let $\eta>0$ be such that
$f(2\eta)<\epsilon/2$, which exists because $f$ is null. Let $c$ be the
constant supplied by Theorem~\ref{thm:eqn_cover} when $\eta$ is used in
place of~$\epsilon$. Then $(F,A,b,Z)$ satisfies the conditions of the
theorem (we can of course assume $\eta<1/2$) so we obtain a collection
$\mathcal{C}$ of containers for the solution-free subsets.

By increasing $c$ if necessary, condition~(c) of the theorem implies
$\log|\mathcal{C}|\le (\epsilon/2)|F|\log 2$ (because $1/m_F(A)>0$). Thus
$|\mathcal{C}|\le 2^{(\epsilon/2)|F|}$.

Let $C\in\mathcal{C}$. By condition~(b), the number of solutions in $C^r$
to the system $(F,A,b)$ is at most $\eta|F|^{r-k}+|Z|<
2\eta|F|^{r-k}$. Since $(F,A,b)$ is $f$-supersaturated, the definition of
$\eta$ means that $|C|\le {\rm ex}(F,A,b)+(\epsilon/2)|F|$.

The total number of solution-free subsets is at most
$|\mathcal{C}|2^{\max_{C\in\mathcal{C}}|C|}$. The inequalities just proved
mean this is at most $2^{{\rm ex}(F,A,b)+\epsilon|F|}$, as claimed.
\end{proof}

If $A$ is not abundant, then the conclusion of Theorem~\ref{thm:eqn_count}
need not hold. For example, let $A = (1, 1)$, $b = (0)$, and consider the
cyclic group $C_n$ for $n$ odd.
Observe that the pairs $(x,y)$ such that $x+y=0$ and $x \ne y$ partition $C_n
\setminus \{0\}$. Therefore $\mbox{ex}(C_n,A,b) = (n+1)/2$.
However, one can construct a solution-free set by including either $x$ or
$y$ or neither for each pair $(x,y)$, so there are at least $3^{(n-1)/2}$
solution-free sets.
There are similar examples with larger values of $k$ and $r>k+2$.

Additionally, when $F=[N]$, the condition that~$A$ is fixed as
$|F|=N\to\infty$ is necessary. For example, for
the equation $w+x+(10N)y-(10N)z=N$, the maximum size of a solution-free subset
of~$[N]$ is $N/2$ (since for every pair $w,x\in[N]$ with $w+x=N$, a
solution-free set can include at most one of~$w$ or~$x$), but there are at
least $3^{(N-1)/2}$ solution free sets, since for every $w,x\in[N]$ with
$w+x=N$ and $w\ne x$, we can include either~$w$ or~$x$ or neither to form a
solution-free set.

We now turn to solution-free subsets within randomly chosen subsets
$X\subset F$, as mentioned in~\S\ref{subsec:lineq}. Here is
the main result.

\begin{thm}\label{thm:eqn_sparse}
  Let $k,r\in\mathbb{N}$ and let $f:\mathbb{R}^+\to\mathbb{R}^+$ be null.
  Let $(F,A,b,Z)$ be a $k \times r$ linear system with $A$ abundant and
  $(F,A,b)$ $f$-supersaturated. Given $\epsilon>0$, there exists
  $c=c(k,r,f,\epsilon)$ (or $c=c(A,f,\epsilon)$ in the case $F=[N]$) and
  $\eta=\eta(f,\epsilon)>0$, such that, if $|F|>c$ and $|Z|<\eta|F|^{r-k}$,
  $p \ge
  c|F|^{-1/m_F(A)}$, and $X \subset F$ is a random subset with each element
  included independently with probability~$p$, then the following event
  holds with probability greater than $1-\exp\{-\epsilon^3 p |F| / 512\}$:
$$
\mbox{every solution-free subset has at most $p(\mbox{\rm
    ex}(F,A,b)+\epsilon|F|)$ elements.}
$$
\end{thm}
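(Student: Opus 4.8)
The plan is to feed the container theorem (Theorem~\ref{thm:eqn_cover}) into a Chernoff-type union bound, exploiting the ``fingerprint'' structure of the containers rather than their sheer number, so as not to lose a factor $\log|F|$ from the permitted range of~$p$. First I would pick $\eta=\eta(f,\epsilon)>0$ small enough that $f(2\eta)<\epsilon/4$, which is possible since $f$ is null, and use $\eta$ as the value of the ``$\epsilon$'' to be plugged into Theorem~\ref{thm:eqn_cover}. That theorem then supplies, for $|F|$ large, a family $\mathcal{C}$ and a function $C(\cdot)$ such that every solution-free $I\subset F$ has a fingerprint $T=T(I)\subset I$ with $|T|\le q:=c_0|F|^{1-1/m_F(A)}$ and $I\subset C(T)\in\mathcal{C}$. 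For each $C\in\mathcal{C}$, part~(b) together with $|Z|<\eta|F|^{r-k}$ bounds the number of solutions to $Ax=b$ with $x\in C^r$ by $2\eta|F|^{r-k}$, so $f$-supersaturation yields $|C|\le\mbox{ex}(F,A,b)+f(2\eta)|F|\le\mbox{ex}(F,A,b)+(\epsilon/4)|F|$. (The bound on $|\mathcal{C}|$ in part~(c) is not needed for this argument.) I would then take the constant $c$ of the theorem to be $c=c(\epsilon,c_0)$ so large that $p\ge c|F|^{-1/m_F(A)}$ forces $q\le(\epsilon/8)\,p|F|$ and, more importantly, makes $\rho:=p|F|/q\ (\ge c/c_0)$ large enough that $\rho^{-1}\log(e\rho)$ is as small as we please.

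Next I would study the bad event $\mathcal{B}$ that some solution-free $I\subset X$ has $|I|>p(\mbox{ex}(F,A,b)+\epsilon|F|)$. If $I$ witnesses $\mathcal{B}$, write $S=T(I)$ and $C=C(S)$; then $S\subset I\subset X$ and $I\setminus S\subset(C\setminus S)\cap X$, whence
$$
|(C\setminus S)\cap X|\ \ge\ |I|-|S|\ >\ p\bigl(\mbox{ex}(F,A,b)+\epsilon|F|\bigr)-q\ \ge\ p\,|C\setminus S|+\tfrac{5}{8}\epsilon\,p|F|,
$$
using $|C\setminus S|\le|C|\le\mbox{ex}(F,A,b)+(\epsilon/4)|F|$ and $q\le(\epsilon/8)p|F|$. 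Thus $\mathcal{B}$ implies that for some fingerprint $S$ (and there are at most $\binom{|F|}{t}$ sets of each size $t\le q$, a fortiori at most that many fingerprints) both $S\subset X$ and $|(C(S)\setminus S)\cap X|>p\,|C(S)\setminus S|+a$ hold, where $a=\tfrac{5}{8}\epsilon\,p|F|$. The point of splitting off~$S$ is that $S$ and $C(S)\setminus S$ are disjoint, so for each fixed $S$ the two events are independent; hence
$$
\Pr[\mathcal{B}]\ \le\ \sum_{t\le q}\binom{|F|}{t}p^{t}\ \cdot\ \max_{S}\,\Pr\bigl[\mathrm{Bin}(m_S,p)>m_Sp+a\bigr],\qquad m_S:=|C(S)\setminus S|\le|F|.
$$

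Finally I would bound the two factors separately. A standard Chernoff estimate gives $\Pr[\mathrm{Bin}(m,p)>mp+a]\le\exp(-a^2/(2mp+\tfrac23 a))\le\exp(-\Omega(\epsilon^{2}p|F|))$ uniformly for $m\le|F|$. For the combinatorial factor, since $q\le p|F|$ lies below the mode of the weights $\binom{|F|}{t}p^{t}$, the sum is at most $(q+1)\binom{|F|}{q}p^{q}\le\exp\!\bigl(q\log(e|F|p/q)+O(\log|F|)\bigr)=\exp\!\bigl(\rho^{-1}\log(e\rho)\cdot p|F|+O(\log|F|)\bigr)$, and by the choice of $c$ the coefficient $\rho^{-1}\log(e\rho)$ can be made an arbitrarily small fraction of the Chernoff exponent, the $O(\log|F|)$ term being negligible because $p|F|\ge c|F|^{1-1/m_F(A)}$ is a fixed positive power of $|F|$. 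Combining the two factors yields $\Pr[\mathcal{B}]\le\exp(-\Omega(\epsilon^{2}p|F|))$, which (we may assume $\epsilon<1$) is at most $\exp(-\epsilon^{3}p|F|/512)$ for a suitable tracking of the constants; this is the assertion. The step I expect to be the main obstacle is exactly this last balancing act: bounding $\Pr[\mathcal{B}]$ by a union bound over all of $\mathcal{C}$ directly would cost a factor $|\mathcal{C}|=2^{\Theta(|F|^{1-1/m_F(A)}\log|F|)}$ and hence would only work for $p\gtrsim|F|^{-1/m_F(A)}\log|F|$; the fingerprint factorisation replaces $|\mathcal{C}|$ by $\sum_{t\le q}\binom{|F|}{t}p^{t}$, and one must check that taking $c$ large enough makes $q$ a small enough fraction of $p|F|$ that this sum is subexponential against $\exp(\Omega(\epsilon^{2}p|F|))$ with no spare logarithm to give away — which is precisely why the hypothesis $p\ge c|F|^{-1/m_F(A)}$, rather than a logarithmically larger bound, is enough.
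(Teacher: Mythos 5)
Your argument is correct and is, in substance, the same as the paper's: both first feed $\eta$ (chosen so $f(2\eta)<\epsilon/4$) into Theorem~\ref{thm:eqn_cover}, use supersaturation to bound $|C|$, and then apply a Chernoff-plus-union-bound estimate that exploits the fingerprint $T\subset I$ to get the log-free range of~$p$. The one real difference is bookkeeping: the paper quotes Lemma~\ref{lem:sparse} (namely \cite[Lemma~10.3]{ST2}) as a black box with parameters $\nu=\epsilon/2$, $d=\mathrm{ex}(F,A,b)+\epsilon|F|/4$, $t=c'|F|^{1-1/m_F(A)}$, and notes that its proof is ``just a combination of a Chernoff bound and the union bound''; you have reproved that lemma in situ — disjointness of $S$ and $C(S)\setminus S$ gives the independence, $\sum_{t\le q}\binom{|F|}{t}p^t$ replaces $|\mathcal{C}|$ in the union bound, and the condition $p\ge c|F|^{-1/m_F(A)}$ makes $\rho^{-1}\log(e\rho)$ small so the combinatorial factor is dominated by the Chernoff factor. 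This is precisely the content of the hypothesis $\phi t/n\le p$ in Lemma~\ref{lem:sparse}, so your proof makes explicit what the paper delegates.
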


As sketched earlier, when $F$ is a finite field the condition $p\ge
c|F|^{-1/m_F(A)}$ in Theorem~\ref{thm:eqn_sparse} is tight up to the value
of the constant $c$ appearing, at least under some mild restrictions on
$(F,A,b,Z)$. See R\"odl and Ruci\'nski~\cite{RR} for more detail.

To prove Theorem~\ref{thm:eqn_sparse} we use the following probabilistic
lemma from~\cite{ST2}. A very straightforward expectation argument applied
to Theorem~\ref{thm:eqn_cover} will give Theorem~\ref{thm:eqn_sparse} with
just a slightly worse bound on~$p$, namely $p \ge c|F|^{-1/m_F(A)}\log
|F|$; the point of the next lemma is that it allows us to take advantage of
condition~(a) of Theorem~\ref{thm:eqn_cover}, namely that $T\subset I$, to
remove the extra log factor and obtain a best possible result. The lemma is
stated in a generality that is not needed for the present application, but
we quote it as it appears in~\cite{ST2}, apart from replacing a tuple
$(T_1,\ldots,T_{s'})$ by a single set~$T$. The proof of the lemma is just
a combination of a Chernoff bound and the union bound.

\begin{lem}[{\cite[Lemma~10.3]{ST2}}]\label{lem:sparse}
  Given $0<\nu<1$ there is a constant $\phi=\phi(\nu)$ such that the
  following holds.  Let $L$ be a set, $|L|=n$, and let $\mathcal{I} \subset
  \mathcal{P}L$.  Let $t\ge1$, let $\phi t/n \le p \le1$ and let $\nu
  n/2\le d \le n$.  Suppose for each $I\in\mathcal{I}$ there exists both
  $T_I\subset I$ and $D=D(T_I)\subset L$, such that $|T_I| \le t$ and
  $|D(T_I)| \le d$.  Let $X\subset L$ be a random subset where each element
  is chosen independently with probability $p$. Then
\begin{equation*}
\mathbb{P}\left( |D(T_I) \cap X|>(1+\nu)pd\mbox{ for some }
  I \subset X,\, I \in \mathcal{I} \right)
  \le \exp\{ -\nu^2 pd/32 \}.
\end{equation*}
\end{lem}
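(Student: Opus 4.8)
The plan is to bound the probability by a union bound taken not over the family $\mathcal{I}$ itself, which may be enormous, but over the family of ``fingerprints'' $\mathcal{T}=\{T_I:I\in\mathcal{I}\}$, each of which has size at most $t$; the role of the hypothesis $T_I\subset I$ is precisely to make this reduction legitimate. First I would observe that if $I\in\mathcal{I}$ satisfies $I\subset X$ and $|D(T_I)\cap X|>(1+\nu)pd$, then, writing $T=T_I$, we have $T\subset X$ (as $T\subset I\subset X$) and $|D(T)\cap X|>(1+\nu)pd$. So, letting $E_T$ denote the event $\{T\subset X\}\cap\{|D(T)\cap X|>(1+\nu)pd\}$, the event we must bound is contained in $\bigcup_{T}E_T$, the union being over all $T\subset L$ with $|T|\le t$. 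It therefore suffices to show $\sum_{T\subset L,\,|T|\le t}\mathbb{P}(E_T)\le\exp\{-\nu^2pd/32\}$.

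Next I would estimate $\mathbb{P}(E_T)$ for a fixed $T$ by conditioning on $\{T\subset X\}$, an event of probability $p^{|T|}$. Conditional on it, $X\setminus T$ is still a $p$-random subset of $L\setminus T$, so $|D(T)\cap X|\le|T|+|D(T)\cap(X\setminus T)|$, where $|D(T)\cap(X\setminus T)|$ is a binomial random variable stochastically dominated by $\mathrm{Bin}(d,p)$, of mean at most $pd$. Choosing the constant $\phi=\phi(\nu)$ large enough that $t\le\nu pd/2$ (possible, since $pd\ge p\nu n/2\ge\nu\phi t/2$), the event $\{|D(T)\cap X|>(1+\nu)pd\}$ then implies $\mathrm{Bin}(d,p)>(1+\nu/2)pd$, which by the multiplicative Chernoff bound has probability at most $\exp\{-(\nu/2)^2pd/3\}=\exp\{-\nu^2pd/12\}$. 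Hence $\mathbb{P}(E_T)\le p^{|T|}\exp\{-\nu^2pd/12\}$.

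Summing over $T$ gives $\sum_{|T|\le t}\mathbb{P}(E_T)\le\exp\{-\nu^2pd/12\}\sum_{j=0}^{\lfloor t\rfloor}\binom{n}{j}p^j$, and the remaining point is that the combinatorial factor is negligible against the Chernoff gain. Since $np\ge\phi t$ and $\phi>1$ we have $t<np$, so $\sum_{j\le t}\binom nj p^j\le(t+1)(enp/t)^t$ (the summand $(enp/j)^j$ being increasing in $j$ for $j<np$); taking logarithms and using that $t\mapsto t\log(enp/t)$ is increasing for $t<np$ together with $t\le np/\phi$ gives $\log\bigl(\sum_{j\le t}\binom nj p^j\bigr)\le\log(np+1)+(np/\phi)(1+\log\phi)$. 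Now $d\ge\nu n/2$ yields $pd\ge\nu np/2$, hence $5\nu^2pd/96\ge5\nu^3np/192$, and dividing through by $np\ge\phi$ one sees it is enough to have $\bigl(1+\log\phi+\log(\phi+1)\bigr)/\phi\le 5\nu^3/192$, which holds once $\phi=\phi(\nu)$ is large enough. Consequently $\sum_{|T|\le t}\mathbb{P}(E_T)\le\exp\{(5/96-8/96)\nu^2pd\}=\exp\{-\nu^2pd/32\}$, as required.

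I expect the only real obstacle to be this last bookkeeping: the constant $\phi(\nu)$ must be chosen large enough to meet two demands simultaneously --- to make $t\le\nu pd/2$ in the Chernoff step, and to make the $\phi$-dependent counting term negligible --- after which one must check that the resulting inequality is uniform over all $n,p,d,t$ in the stated ranges. The structural reason it all fits is that the two hypotheses $d\ge\nu n/2$ (so that $pd$ is comparable to $pn$) and $np\ge\phi t$ confine the parameters to exactly the regime in which weighting each fingerprint by $p^{|T|}$, rather than merely counting fingerprints, makes the union bound affordable.
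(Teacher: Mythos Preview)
Your proposal is correct and is precisely the argument the paper has in mind: the paper does not give a proof but simply remarks that the lemma ``is just a combination of a Chernoff bound and the union bound'', and your write-up carries out exactly that, exploiting $T_I\subset I$ to take the union bound over fingerprints weighted by $p^{|T|}$ rather than over $\mathcal{I}$. The bookkeeping with $\phi(\nu)$ is handled correctly.
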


\begin{proof}[Proof of Theorem~\ref{thm:eqn_sparse}]
  Let $L=F$ and let $\mathcal{I}$ be the set of solution-free sets for the
  system $(F,A,b,Z)$. Let $\eta>0$ be such that $f(2\eta)<\epsilon/4$,
  which exists because $f$ is null. Let $c'$ be the constant supplied by
  Theorem~\ref{thm:eqn_cover} when $\eta$ is used in place
  of~$\epsilon$. Then $(F,A,b,Z)$ satisfies the the conditions of the
  theorem (assuming as ever that $\eta<1/2$) so we obtain a collection
  $\mathcal{C}$ of containers for~$\mathcal{I}$.  For $I\in\mathcal{I}$, let
  $T=T_I$, $C=C(T)$ be as given by the theorem. Our aim is to apply
  Lemma~\ref{lem:sparse} with $D(T)=C(T)$ and
  \[
  \nu = \epsilon/2, \quad d = \mbox{ex}(F,A,b)+\epsilon|F|/4, 
  \quad t = c' |F|^{1-1/m_F(A)}\,.
  \]
  Note that, by condition~(b) of Theorem~\ref{thm:eqn_cover}, for each
  $C=C(T)$, $C^r-Z$ has at most $\eta |F|^{r-k}$ solutions to $Ax=b$,
  and so $C^r$ has at most $2\eta |F|^{r-k}$ solutions, and hence
  $|C(T)|\le d$ holds by the supersaturation property. Condition~(a)
  implies $|T_I|\le t$. The conditions of
  Lemma~\ref{lem:sparse} then hold with $n=|F|$, noting that $d\ge \nu n/2$
  and that $p\ge c|F|^{-1/m_F(A)}\ge \phi t/n$ if $c$ is large
  enough. Finally, note that each solution-free set $I\in\mathcal{I}$ is
  contained in $C(T_I)$ and $(1+\nu)pd \le p(\mbox{ex}(F,A,b)+\epsilon|F|)$,
  so the concluding inequality of the lemma means that the property of the
  theorem fails with probability bounded by
  \[
  \exp\{-\nu^2pd/32\} \le \exp\left\{-\epsilon^3 p|F| / 512
  \right\},
  \]
  completing the proof.
\end{proof}

\begin{proof}[Proof of Theorem~\ref{thm:szem_sparse}]
  Let $([N],A,b,Z)$ be the $(\ell-2)\times \ell$ linear system
  corresponding to forbidding an $\ell$-term arithmetic progression in
  $[N]$. For example if $\ell=3$ then $A=(1,1,-2)$, $b=(0)$ and $Z$ is the
  set of solutions of the form $x+x-2x=0$ that are discounted, so $|Z|=N$.
  As mentioned in~\S\ref{subsec:supersat}, Varnavides' theorem shows this
  system is $f$-supersaturated for some $f$ not depending on~$N$. It can
  readily be checked that $m_{[N]}(A)=\ell-1$. Since
  $\mbox{ex}(F,A,b)=o(N)$, the result immediately follows by applying
  Theorem~\ref{thm:eqn_sparse}.
\end{proof}

\section{Sidon sets}\label{sec:Sidon}

In this section we prove Theorem~\ref{thm:sidon}. We begin with the simple
construction giving the lower bound.

\begin{proof}[Proof of Theorem~\ref{thm:sidon}, lower bound]
  Suppose that $n=4p(p-1)$ for some prime~$p$.  Ruzsa~\cite{R} shows that
  there is a set $S \subset [p(p-1)]$ of size $p-1$ such that every sum of
  two elements of $S$ is distinct modulo $p(p-1)$.  Thus for any $U_1, U_2,
  U_3, U_4 \subset S$ satisfying $U_i \cap U_j = \emptyset$ for $i \ne j$,
  the set
  \[
  U_1 \cup (U_2 + p(p-1)) \cup (U_3 + 2p(p-1)) \cup (U_4 + 3p(p-1))
  \]
  is a Sidon subset of $[4p(p-1)]$, where $V + x := \{v + x : v \in V\}$.
  This gives $5^{p-1} = \sqrt{5}^{(1+o(1))\sqrt{n}} >
  2^{(1.16+o(1))\sqrt{n}}$ Sidon subsets of $[n]=[4p(p-1)]$.  The general
  case follows by embedding $[4p(p-1)]$ into $[n]$, where $p$ is the
  largest prime such that $4p(p-1) < n$, and using the fact that the ratio
  of successive primes tends to 1. (We note that any construction for large
  modular Sidon sets could have been used here; this includes the classical
  constructions of Singer~\cite{Sing} and of Bose~\cite{Bose}.)
\end{proof}

To prove the upper bound we construct, in the natural way, the hypergraph
representing the solutions to $w+x=y+z$ in a subset $S\subset[n]$. We then
apply an iterated version of Theorem~\ref{thm:cover} to this hypergraph in
an entirely mechanical way; all that is needed is to set appropriate values
and to check the conditions.

Corollary~\ref{cor:sparse_container} is an iterated version of
Theorem~\ref{thm:coverweak} but it is a bit too crude for use here. Only a
constant number of iterations are involved (of the order $\log(1/\epsilon)$)
whereas here the number of iterations is a function of~$n$, as the
containers shrink from size $n$ to order $\sqrt n$. Moreover we need to
take account of a change in behaviour of the codegree function when the
size of the container drops below $n^{2/3}$, as the dominant contribution
then comes from $\delta_2$ rather than $\delta_4$ (see
equation~(\ref{eqn:sidon_delta}) below); Kohayakawa, Lee, R\"odl and
Samotij~\cite{KLRS} noticed an interesting behavioural change at the same
point, for a closely related problem.

The version we need is the following. It is identical
to~\cite[Theorem~6.3]{ST2}, but with a tuple $(T_1,\ldots,T_s)$ replaced by
a single set~$T$.

\begin{thm}\label{thm:iteration}
  Let $G$ be an $r$-graph on vertex set $[n]$. Let $e_0 \le e(G)$. Suppose
  that, for each $U \subset [n]$ with $e(G[U]) \ge e_0$, the function
  $\tau(U)$ satisfies $\tau(U)<1/2$ and $\delta(G[U],\tau(U)) \le 1/12r!$.
  For $e_0\le m\le e(G)$ define
\begin{align*}
 f(m) &= \max\{\, - |U| \tau(U) \log \tau(U) : U \subset [n],\, e(G[U]) \ge
 m \} \\ 
\tau^* &= \max\{\, \tau(U) : U \subset [n],\, e(G[U]) \ge e_0 \}
\end{align*}
Let $k= \log(e_0/e(G))/\log(1-1/2r!)$.
Then there is a collection $\C \subset \mathcal{P}[n]$ such that
\begin{itemize}
\item[(a)] for every independent set $I$ there exists $T\subset I$ with $I
  \subset C(T) \in \C$ and $|T| \le 288(k+1)r!^2r\tau^* n$,
 \item[(b)] $e(G[C]) \le e_0$ for all $C \in \C$,
 \item[(c)] $\log|\C| \le 288r!^2r \sum_{0\le i< k} f( e_0 / (1-1/2r!)^i )$.
\end{itemize}
\end{thm}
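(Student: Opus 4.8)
The plan is to obtain Theorem~\ref{thm:iteration} by iterating Theorem~\ref{thm:cover}, reproducing the proof of \cite[Theorem~6.3]{ST2} essentially verbatim; the only genuinely new ingredient is that, by Remark~\ref{rem:enlarge}, the tuple of small sets generated along the way collapses to a single set $T$. So I would fix an independent set $I$, put $C_0=[n]$, and iterate: given a container $C_i\supseteq I$ with $e(G[C_i])>e_0$, apply Theorem~\ref{thm:cover} to the $r$-graph $G[C_i]$ with parameter $\tau=\tau(C_i)$ (and a value of $\zeta$ chosen, as in \cite{ST2}, so that the error terms in Theorem~\ref{thm:cover}(d) are controlled; the hypothesis $\delta(G[C_i],\tau(C_i))\le 1/12r!$ is exactly what licenses this). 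This yields $T_{i+1}\subseteq I$ with $|T_{i+1}|\le 288r!^2r\,\tau(C_i)|C_i|$ and a container $C_{i+1}=C(G[C_i],T_{i+1})\subseteq C_i$ with $\mu_{G[C_i]}(C_{i+1})\le 1-1/2r!$. Using the elementary inequality $e(G[S])\le\mu(S)e(G)$ (this is \cite[inequality~(1)]{ST2}), we get $e(G[C_{i+1}])\le(1-1/2r!)\,e(G[C_i])$, so after at most $k+1$ steps we reach a container $C$ with $e(G[C])\le e_0$ and stop; this is part~(b).

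For part~(a): by Remark~\ref{rem:enlarge}, applied iteratively exactly as in the proof of Corollary~\ref{cor:sparse_container}, feeding the single set $T:=T_1\cup\cdots\cup T_{i^*}\subseteq I$ to build mode at every stage reproduces each $C_j$, so $C=C(T)$ is a function of $T$. Since $i^*\le k+1$, $\tau(C_{j-1})\le\tau^*$ and $|C_{j-1}|\le n$, we get $|T|\le\sum_j|T_j|\le 288(k+1)r!^2r\,\tau^*n$, which is~(a).

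For part~(c) one must avoid the crude estimate $\log|\C|\le\log\#\{T:|T|\le\cdots\}$, which would lose a factor of $k$. Instead I would view the containers as a tree rooted at $[n]$: along any root-to-leaf chain $C_0\supseteq C_1\supseteq\cdots$ the edge counts drop by a factor at least $1-1/2r!$ each step, so the ``levels'' $\ell_i$ defined by $(1-1/2r!)^{\ell_i+1}e(G)<e(G[C_i])\le(1-1/2r!)^{\ell_i}e(G)$ are strictly increasing and lie in $\{0,\ldots,k-1\}$. Given $C_i$, the number of admissible $T_{i+1}$ is at most $\binom{|C_i|}{\le 288r!^2r\,\tau(C_i)|C_i|}$, and since $\tau(C_i)$ is small this is at most $\exp\{288r!^2r\,f(e(G[C_i]))\}$ because $C_i$ is itself a competitor in the maximum defining $f(e(G[C_i]))$. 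As $e(G[C_i])>(1-1/2r!)^{\ell_i+1}e(G)=e_0/(1-1/2r!)^{k-\ell_i-1}$ and $f$ is non-increasing, this is at most $\exp\{288r!^2r\,f(e_0/(1-1/2r!)^{k-\ell_i-1})\}$. Multiplying along chains and using that the exponents $k-\ell_i-1$ are distinct elements of $\{0,\ldots,k-1\}$ while all $f$-values are non-negative, a short induction on level gives $\log|\C|\le 288r!^2r\sum_{0\le i<k}f(e_0/(1-1/2r!)^i)$, which is~(c).

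The only step that is not bookkeeping is the per-iteration contraction $e(G[C_{i+1}])\le(1-1/2r!)e(G[C_i])$: Theorem~\ref{thm:cover}(d) gives only $\mu(C)\le 1-1/r!+4\zeta+2r\tau/\zeta$, so to force $\mu(C)\le 1-1/2r!$ one must choose $\zeta$ carefully as a function of $\tau(C_i)$ and $\delta(G[C_i],\tau(C_i))$ — this is precisely where the hypotheses $\tau(U)<1/2$ and $\delta(G[U],\tau(U))\le 1/12r!$ enter, exactly as in \cite[\S6]{ST2}. Everything else (collapsing to a single $T$, the size bound, and the level-bucketed count) is the routine assembly sketched above, and carries over from \cite{ST2} unchanged.
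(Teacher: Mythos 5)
Your proposal follows exactly the route the paper intends: iterate Theorem~\ref{thm:cover} with $\zeta=1/12r!$ (giving the per\nobreakdash-step constant $2r\tau/\zeta^{2}=288r!^{2}r\tau$ and the contraction $\mu(C)\le 1-1/2r!$), collapse the resulting sequence $T_1,T_2,\ldots$ into a single $T$ via Remark~\ref{rem:enlarge} exactly as in the proof of Corollary~\ref{cor:sparse_container}, and bound $\log|\C|$ by bucketing chains of containers according to the dyadic level of their edge count to avoid the factor-of-$k$ loss from a naive count. The paper itself declines to give a proof, pointing only to \cite[Theorem~6.3]{ST2} with the tuple replaced by a single set, and your sketch is a faithful account of how that argument transfers.
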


\begin{proof}[Proof of Theorem~\ref{thm:sidon}, upper bound]
  Let $G$ be the $4$-graph on vertex set $[n]$, where $\{w,x,y,z\} \in
  [n]^{(4)}$ is an edge whenever $w+x = y+z$. Sidon sets correspond to
  independent sets in $G$ (although the converse is not always true, since
  solutions to $w+x=y+z$ where $w=x$ or $y=z$ do not correspond to edges
  of~$G$).

  Let $U\subset [n]$ and $u=|U|$. For $i \in [n-1]$, let $t_i = |\{\{x,y\}
  \in U^{(2)} : x<y, y-x = i\}|$.  Note that $\sum_i t_i = {u \choose 2}$.
  Each pair of sets $\{w,z\}\ne\{y,x\}$ with $w-z=y-x$ corresponds to an
  edge with $w+x=y+z$, and each such edge corresponds to the two pairs
  $\{w,z\}\ne\{y,x\}$ and $\{w,y\}\ne\{x,z\}$. Hence, for large $u$, the
  number of edges in $G[U]$ satisfies
  \begin{align*}
    m = e(G[U]) = \frac{1}{2} \sum_{i=1}^{n-1} {t_i \choose 2} \ge
    \frac{n-1}{2} { \frac{1}{n-1} \sum_i t_i \choose 2} \ge u^4 / 20n.
  \end{align*}

  We shall apply Theorem~\ref{thm:iteration} to the graph $G$. To this end,
  let $\beta = 3\times10^{14}$, let $e_0 = \beta^4 n / 20$, and consider $U
  \subset [n]$ where $e(G[U])\ge e_0$. Since the bound we are proving is an
  asymptotic one, we may assume that $n$ is large: this in turn means that
  $e_0$ is large, so $m$ is large and (since $m\le {u\choose 4}$) $u$ is
  also large; in particular the inequality $u\le (20nm)^{1/4}$ always
  holds.

  Let $k=12r!=288$. Now put
  \[
  \tau = \tau(U)= \max\{24ku^2/m, (4ku/m)^{1/3}\}.
  \]
  To apply Theorem~\ref{thm:iteration} we must check that $\tau \le 1/2$
  and that $\delta \le 1/12r!$. For convenience we shall verify $\tau\le
  1/12$ (in fact $\tau$ is far smaller).

  Recall the definition of $d^{(j)}(w)$. In $G[U]$, observe that
  $d^{(2)}(w) \le u/2 + u = 3u/2$, since for $x \in U$, the number of
  solutions of the form $w+x = y+z$ is at most $u/2$ and the number of
  solutions of the form $w+y = x+z$ is at most $u$; similarly $d^{(3)}(w)
  \le 3$ and $d^{(4)}(w) \le 1$.

  Hence
  \[
  \delta_2 \le \frac{3u^2}{8\tau m} \qquad \delta_3 \le \frac{3u}{4\tau^2
    m} \qquad \delta_4 \le \frac{u}{4\tau^3 m},
  \]
  and (since $\tau < 1/12$, as we shall check shortly)
  \begin{equation}\label{eqn:sidon_delta}
    \delta = 32\delta_2 + 16\delta_3 + 4\delta_4
    \le \frac{12u^2}{\tau m} + \frac{2u}{\tau^3 m}.
  \end{equation}
  Then both terms on the right hand side of~(\ref{eqn:sidon_delta}) are
  less than $1/2k$, so $\delta \le 1/12r!$ is satisfied.

  If $\tau \le 24ku^2/m$, then the constraint $\tau \le 1/12$ holds
  comfortably (since $u\le (20nm)^{1/4}$ and $m\ge e_0$), and furthermore
  \begin{align*}
    u \tau \log(1/\tau)
    &\le (24ku^3/m) \log( m / (24ku^2) ) \\
    &\le 20^{3/4} 48k \sqrt{n} \left(\frac{n}{m}\right)^{1/4} \log \frac{
      (m/n)^{1/4} }{ (24k)^{1/2}(20)^{1/4} } \\ 
    &=: f_1(m),
  \end{align*}
  where the first inequality holds since $\tau \log (1/\tau)$ is an
  increasing function of $\tau$ when $\tau < 1/e$, and the second
  inequality holds since $u^3 \log (m/(24ku^2))$ is an increasing function
  of $u$ when $u \le e^{-1/3} \sqrt{m/24k}$, and $u \le (20nm)^{1/4}$ which
  is less than $e^{-1/3} \sqrt{m/24k}$ because $m\ge e_0$.

  Alternatively, if $\tau \le (4ku/m)^{1/3}$ then the constraint $\tau \le
  1/12$ is easily satisfied, and also
  \begin{align*}
    u \tau \log(1/\tau)
    &\le (4ku^4/27m)^{1/3} \log( m/4ku ) \\
    &\le 6k^{1/3} n^{1/3} \log n \\
    &=: f_2(m) \qquad\mbox{when $m \le e(G)$},
  \end{align*}
  where the second inequality holds because $u^{4/3} \log( m/4ku)$ is an
  increasing function of $u$ for $u \le e^{-3/4} m/4k$ (which is larger
  than $(20nm)^{1/4}$), together with the bound $m \le n^4$. Let $f_2(m)=0$
  for $m > e(G)$.

  Therefore the conditions of Theorem~\ref{thm:iteration} are
  satisfied. Moreover, since $f_1$ and $f_2$ are non-increasing functions
  of $m$, we have $f(m) \le \max\{ f_1(m), f_2(m) \}$ for $m \ge e_0$. So
  let $\C$ be the collection of containers given by
  Theorem~\ref{thm:iteration} for the graph $G$, where each $C \in \C$
  satisfies $e(G[C]) \le e_0$. Writing $\alpha = 1-1/2r!$ and $m_i = e_0 /
  \alpha^i = \beta^4 n/20\alpha^i$, we have $\log|\C| \le 288rr!^2 \sum_{i
    \ge 0} f( m_i )$.

  Note that $\sum_{i \ge 0} \gamma^i = 1/(1-\gamma)$ and $\sum_{i \ge 0} i
  \gamma^i = \gamma / (1-\gamma)^2$, so
  \begin{align*}
    288rr!^2 \sum_{i\ge0} f_1(m_i)
    &= 288rr!^2 20^{3/4} 48k  \sqrt{n} \sum_{i\ge0}
    \frac{(20\alpha^i)^{1/4}}{\beta} \log \frac{
      \beta}{\alpha^{i/4}\sqrt{480k} } \\ 
    &= 288rr!^2 \frac{960k \sqrt{n}}{\beta} \left( \frac{\alpha^{1/4}\log
        (1/\alpha)}{4(1-\alpha^{1/4})^2}
      + \frac{\log (\beta / \sqrt{480k})}{1-\alpha^{1/4}} \right) \\
    &< \frac{7\sqrt{n}}{2}.
  \end{align*}
  Observe that $m_i \ge n^4 > e(G)$ when $i \ge 3 \log n / \log(1/\alpha)$
  (and hence $f_2(m_i) = 0$), so
  \[
  \sum_{i \ge 0} f_2(m_i) = o(\sqrt{n}).
  \]

  Each Sidon set in $[n]$ is a subset of size at most $(1+o(1))\sqrt{n}$ of
  some $C \in \C$, where $|C| \le u_0=\beta\sqrt n$ (because if $|C| > u_0$
  then $e(G[C]) \ge 20u^4/n>e_0$). The number of such subsets is at most
  $\binom{\beta\sqrt n}{(1+o(1))\sqrt n}$. Using the standard inequality
  $\binom{n}{k}\le \left(\frac{en}{k}\right)^k$, the number of these
  subsets is at most $\exp\{(1+\log \beta + o(1))\sqrt n\}$. Letting
  $\mathcal{S}$ be the collection of Sidon subsets of $[n]$,
  \begin{align*}
    \frac{\log |\mathcal{S}|}{\sqrt{n}} &\le
    1 + \log \beta + o(1) +
    \frac{288rr!^2}{\sqrt{n}} \sum_{i \ge 0} f_1(m_i) +
    \frac{288rr!^2}{\sqrt{n}} \sum_{i \ge 0} f_2(m_i) \\ 
    &< 1+\log\beta + 7/2 +o(1) \,< \,55\log 2+o(1),
  \end{align*}
  which completes the verification.
\end{proof}

\paragraph*{\bf Acknowledgement} We are grateful to a referee for a very
careful reading of the manuscript and for the suggestion that the
paper be made more self-contained, which led to the inclusion of
Theorem~\ref{thm:coverweak}.

\end{document}